\newcommand{\myautoref}[2]{\hyperref[#2]{\autoref*{#1}\ref*{#2}}}
\newcommand{\myautorefstar}[2]{\hyperref[#1]{\autoref*{#1}#2}}
\newcommand{\relautoref}[1]{\hyperref[#1]{Relation~\ref*{#1}}}
\newcommand{\mysecref}[1]{\hyperref[#1]{\S\ref*{#1}}}
\newcommand{\hair}{\ifmmode\mskip1mu\else\kern0.08em\fi}  
\newcommand{\leqnomode}{\tagsleft@true}
\newcommand{\reqnomode}{\tagsleft@false}
\title{Asymmetric graphs with quantum symmetry}
\subjclass[2020]{46L67 (Primary), 20B25, 05C25, 46L85, 20F05 (Secondary)}
\keywords{Graph, quantum automorphism group, quantum symmetry, compact quantum group, solution group, binary constraint system, perfect group}
\author[J.~van Dobben de Bruyn]{Josse van Dobben de Bruyn}
\address{Department of Applied Mathematics and Computer Science, Technical~University~of~Denmark, Richard Petersens Plads 324, 2800~Kongens~Lyngby, Denmark}
\curraddr{Charles University, Faculty of Mathematics and Physics, Department of Applied Mathematics, Malostranské náměstí 25, 118 00 Praha 1, Czech Republic}
\email{josse.van-dobben-de-bruyn@matfyz.cuni.cz}
\author[D.\hair{}E.~Roberson]{David E.~Roberson}
\address{Department of Applied Mathematics and Computer Science, Technical~University~of~Denmark, Richard Petersens Plads 324, 2800~Kongens~Lyngby, Denmark}
\address{QMATH, Department of Mathematical Sciences, University of Copenhagen, Universitetsparken 5, 2100 Copenhagen \O, Denmark}
\thanks{JvDdB and DER are supported by the Carlsberg Foundation Young Researcher Fellowship CF21-0682 -- ``Quantum Graph Theory".}
\email{dero@dtu.dk}
\author[S.~Schmidt]{Simon Schmidt}
\address{Faculty of Computer Science, Ruhr University Bochum, Universitätsstra{\ss}e 150, 44801 Bochum, Germany}
\thanks{SS was funded by the Deutsche Forschungsgemeinschaft (DFG, German Research Foundation) under Germany's Excellence Strategy -- EXC 2092 CASA -- 390781972.}
\email{s.schmidt@rub.de}
\colorlet{Green}{green!70!black}
\colorlet{Red}{red!80!black}
\tikzset{vertex/.style={circle,fill,inner sep=1.32pt,outer sep=.3pt},emph/.style={inner sep=1.35pt}}
\declaretheorem[style=plain,numberwithin=section]{theorem}
\declaretheorem[style=plain,numberlike=theorem]{lemma}
\declaretheorem[style=plain,numberlike=theorem]{proposition}
\declaretheorem[style=plain,numberlike=theorem]{corollary}
\declaretheorem[style=definition,numberlike=theorem]{definition}
\declaretheorem[style=remark,numberlike=theorem]{remark}
\declaretheorem[style=definition,numberlike=theorem]{example}
\declaretheorem[style=plain,title={Theorem}]{AlphTheorem}
\newcommand{\N}{\mathbb{N}}
\newcommand{\C}{\mathbb{C}}
\newcommand{\Z}{\mathbb{Z}}
\newcommand{\bbS}{\mathbb{S}}
\DeclareMathOperator{\Aut}{Aut}
\DeclareMathOperator{\Qut}{Qut}
\newcommand{\symmdiff}{\mathbin{\triangle}}
\newcommand{\ab}{{\operatorname{ab}}}
\DeclareMathOperator{\PSL}{PSL}
\DeclareMathOperator\QIso{QIso}
\DeclareMathOperator{\decolor}{dc}
\newcommand{\refine}[2][]{\operatorname{ref}^{{#1}}(#2)}
\newcommand{\stabref}[1]{\operatorname{sref}(#1)}
\newcommand{\id}{\mathrm{id}}
\newcommand{\letter}[1]{\@alph{#1}} 
\newcommand\niton{\mathrel{\m@th\mathpalette\canc@l\owns}}
\newcommand\canc@l[2]{{\ooalign{$\hfil#1/\mkern1mu\hfil$\crcr$#1#2$}}}
\DeclareSymbolFont{bbold}{U}{bbold}{m}{n}
\DeclareSymbolFontAlphabet{\mathbbold}{bbold}
\newcommand{\one}{\ensuremath{\mathbbold{1}}}
\DeclareFontFamily{U}{mathb}{\hyphenchar\font45}
\DeclareFontShape{U}{mathb}{m}{n}{
<5> <6> <7> <8> <9> <10> gen * mathb
<10.95> mathb10 <12> <14.4> <17.28> <20.74> <24.88> mathb12
}{}
\DeclareSymbolFont{mathb}{U}{mathb}{m}{n}
\DeclareMathSymbol{\bigast}{2}{mathb}{"06}
\begin{document}

\begin{abstract}
	We present an infinite sequence of finite graphs with trivial automorphism group and non-trivial quantum automorphism group.
	These are the first known examples of graphs with this property. Moreover, to the best of our knowledge, these are the first examples of \emph{any} asymmetric classical space that has nontrivial quantum symmetries.
	
	Our construction is based on solution groups to (binary) linear systems, as defined by Cleve, Liu and Slofstra in the context of non-local games.
	We first show that the dual quantum group of every solution group occurs as the quantum automorphism group of some graph, and then construct an infinite sequence of systems whose solution groups are nontrivial perfect groups.
	This leads to the desired sequence of graphs.
	
	In addition to our main result, we prove a number of related results that allow us to answer several open problems from the literature.
	We prove a weak quantum analog of Frucht's theorem, namely that every finite classical group $\Gamma$ occurs as the quantum automorphism group of a finite graph.
	Combined with our main result, this shows that, for every finite group $\Gamma$, there are graphs $G_1$ and $G_2$ that both have classical automorphism group isomorphic to $\Gamma$ but one of them has quantum symmetry and the other does not.
	Therefore, the quantum automorphism group of a graph is never determined by its classical automorphism group, and there do not exist any ``quantum excluding groups".
\end{abstract}

\maketitle  

\section{Introduction}

In non-commutative geometry, investigation of the quantum symmetries of a (non-)commutative space has led to the study of \emph{quantum automorphism groups}, which are actually not groups but quantum groups.
Given that the term ``quantum group'' can mean one of several things, it is perhaps not surprising that even finding the right definition of the quantum automorphism group of a certain object can be a challenge.
In cases where the quantum automorphism group is defined, it is interesting to study its properties and compare it to the classical automorphism group.
A natural question is to which extent the two can differ, and a great deal of research on quantum automorphism groups is motivated by questions of this type.

Structures on which quantum automorphism groups have been defined (and shown to exist) include finite (non-)commutative Hausdorff spaces \cite{WanSn}, finite (quantum) groups \cite{BSS}, finite graphs \cite{QBan,QBic}, and infinite sets and graphs \cite{Voigt}.
Interestingly, the quantum automorphism group of a finite (quantum) group always coincides with its classical automorphism group \cite{KSW}, whereas this is generally not the case for sets and graphs.
This means that, given their richer structure, graphs in particular provide fertile ground for studying the peculiarities of quantum symmetry.

Questions regarding the difference between the classical and quantum automorphism group of a graph have been studied by several authors.
A graph is said to have \emph{quantum symmetry} if its classical and quantum automorphism group are different.
Which graphs have quantum symmetry has been studied, for instance, in \cite{BanBic,Schmidtthesis}.
Related to this, the following question came up in the doctoral thesis of the third author \cite[Section 8.1]{Schmidtthesis} (see also \cite[Problem 3.9]{Weber-survey}): Is there a graph with quantum symmetry and trivial automorphism group?
In this paper, we will answer this question in the positive by constructing a sequence of graphs having this property.

Our construction relies on techniques from quantum information theory.
In \cite{CM,CLS}, the authors studied perfect quantum strategies for linear constraint system (LCS) games in terms of the \emph{solution group} of the linear system.
Solution groups of LCS games have since been used to disprove the strong Tsirelson conjecture \cite{Slofstra-Tsirelson} and prove that the set of quantum correlations is not closed \cite{SlofstraQcor}, among other results. 
More recently, connections between solution groups and graph minor theory were studied in \cite{PRSS}.

The connection between quantum automorphism groups of graphs and solution groups of linear constraint systems was already recognized by the second and third author in \cite{RSsolution}.
There it was shown that for every homogeneous solution group, there is a vertex-- and edge-colored graph whose quantum automorphism group is the dual of the solution group.
We will refine the construction in this article, so that we only need vertex-colored graphs for this.
We then present a general decoloring procedure that constructs an uncolored graph $G'$ from an arbitrary vertex-colored graph $G$ such that $\Qut(G) \cong \Qut(G')$.
By applying some additional manipulations directly on the linear systems that do not change the corresponding solution group, we show that the dual of \emph{any} solution group (homogeneous or inhomogeneous) can be realized as the quantum automorphism group of an uncolored graph.

With the construction above in hand, we obtain a graph with quantum symmetry and trivial automorphism group whenever the homogeneous solution group is a non-trivial perfect group (i.e., its abelianization is the trivial group).
To find such a perfect homogeneous solution group, we construct a linear constraint system from all mutually commuting triples of elements of order $2$ that multiply to the identity in the alternating group $A_n$, for all $n \geq 7$.
This way, we obtain our main result:

\begin{AlphTheorem}[Main result]
	\label{thm:main1}
	There exists an infinite sequence of pairwise non-isomorphic graphs $\{G_i\}_{i=1}^\infty$ such that each $G_i$ has trivial automorphism group and non-trivial quantum automorphism group.
\end{AlphTheorem}

In other words, there exist asymmetric graphs with quantum symmetry.
This resolves a prominent open question in the study of quantum automorphism groups of graphs~\cite{Schmidtthesis,QFrucht,Weber-survey}.
Furthermore, to the best of our knowledge, these are the first known examples of \emph{any} kind of classical (i.e., commutative) spaces with no classical symmetry but which do have quantum symmetry.%
	\footnote{Here, we mean ``commutative space" in the broadest possible sense (for example, a topological space, a probability space, a group, a spin manifold, etc.), with the appropriate notion of quantum symmetry (depending on the context, e.g. quantum automorphisms/isometries/etc.).}

The quantum automorphism groups of the first few graphs in our sequence (the only ones that we were able to compute exactly) are finite\-/dimensional, but graphs with trivial automorphism group and infinite\-/dimensional quantum automorphism group can also be obtained from this by some straightforward manipulations.
See \autoref{rmk:infinite} for further details.

\subsection*{Additional results}
In addition to our main result (\autoref{thm:main1}), we also prove a number of related results which answer several other open questions about quantum automorphism groups of graphs.

First, in \cite[Problem 3.11]{Weber-survey}, it was asked whether or not there exist two connected, asymmetric graphs $G_1$ and $G_2$ that are quantum isomorphic but not classically isomorphic.
As pointed out in \cite{Weber-survey}, a positive solution to this problem would immediately give an asymmetric graph with quantum symmetry, namely the disjoint union $G_1 \sqcup G_2$.
By a modification of our proof of \autoref{thm:main1}, we also obtain a positive answer to this question.

\begin{AlphTheorem}
	\label{thm:main3}
	There exist two connected, asymmetric graphs $G_1$ and $G_2$ such that $G_1 \not\cong G_2$ but $G_1 \cong_q G_2$.
\end{AlphTheorem}

Second, in~\cite{QFrucht}, the authors raise the question of whether the graphs that arise in the proof of Frucht's theorem~\cite{Frucht} have quantum symmetry.
Though the choice of these graphs are non-canonical, we show that the specific graphs used in~\cite{Frucht} always have no quantum symmetry.
This proves what can be thought of as a weak quantum analog of Frucht's theorem: that every finite group can be realized as the \emph{quantum} automorphism group of a finite graph.
A stronger quantum analog of Frucht's theorem, that every quantum permutation group can be realized as the quantum automorphism group of some graph, was recently proven not to hold~\cite{QFrucht}. Nonetheless, by considering quantum graphs instead of graphs, it was shown in~\cite{brannan2025quantumfrucht} that every finite quantum group is the quantum automorphism group of a finite quantum graph.  

By combining this weak quantum analog of Frucht's theorem with \autoref{thm:main1}, we are able to prove the following:

\begin{AlphTheorem}
	\label{thm:main2}
	For every finite group $\Gamma$, there exist graphs $G_1$ and $G_2$ such that $\Aut(G_1) \cong \Aut(G_2) \cong \Gamma$, $\Qut(G_1) = \Aut(G_1)$ and $\Qut(G_2) \neq \Aut(G_2)$.
\end{AlphTheorem}

In other words, for every finite group $\Gamma$, the class of all graphs with classical automorphism group isomorphic to $\Gamma$ contains both graphs with and without quantum symmetry.
This allows us to answer some further open questions from the literature.
Specifically, both~\cite{Schmidtthesis} and~\cite{QFrucht} ask if there exist any \emph{quantum-excluding groups}, which are defined in the latter as groups $\Gamma$ such that $\Aut(G) \cong \Gamma$ implies $\Qut(G) \cong \Gamma$ (in other words, every graph with automorphism group $\Gamma$ has no quantum symmetry).
\autoref{thm:main2} shows that no such groups exist.
Moreover, \autoref{thm:main2} shows that for every finite group $\Gamma$, there are graphs $G_1$ and $G_2$ with $\Aut(G_1) \cong \Aut(G_2) \cong \Gamma$ but $\Qut(G_1) \not\cong \Qut(G_2)$.
This shows that the classical automorphism group of a graph never uniquely determines the quantum automorphism group.
This answers a question raised in~\cite{Schmidtthesis}.

Finally, we highlight one of the results we prove along the way, which we believe to be of independent interest.
When constructing graphs having certain prescribed quantum permutation groups as their quantum automorphism groups, it is often useful to begin by constructing a colored graph, as this makes it easier to control the quantum automorphisms of the constructed graph.
The following general ``decoloring procedure'' shows that it is always possible to construct equivalent uncolored graphs while preserving all properties related to (quantum) automorphisms and, more generally, (quantum) isomorphism:

\begin{AlphTheorem}
	\label{thm:main4}
	There is a polynomial time decoloring procedure
	\[ \decolor : \{\text{vertex-colored graphs}\} \to \{\text{connected uncolored graphs}\} \]
	that preserves \textup(quantum\textup) isomorphism and \textup(quantum\textup) automorphism groups;
	that is, for all vertex-colored graphs $G$ and $G'$, one has:
	\begin{enumerate}[label=(\alph*)]
		\item $G \cong   G'$ if and only if $\decolor(G) \cong   \decolor(G')$;
		\item $G \cong_q G'$ if and only if $\decolor(G) \cong_q \decolor(G')$;
		\item $\Aut(G) \cong \Aut(\decolor(G))$;
		\item $\Qut(G) \cong \Qut(\decolor(G))$.
	\end{enumerate}
\end{AlphTheorem}

This generalizes a similar (but different) decoloring procedure given by the second and third author in \cite{RSsolution}, which only applies to graphs with minimum degree $\geq 3$ and does not preserve the quantum automorphism group in general.%
	\footnote{The decoloring procedure from \cite{RSsolution} adds new constraints on the generators of the fundamental representation, so the quantum automorphism group is only preserved if these additional constraints were already satisfied in the original graph.}
Moreover, the decoloring procedure from \cite{RSsolution} was not shown to preserve quantum isomorphism in general.
On the other hand, the decoloring procedure from \cite{RSsolution} is more general in another respect, because it applies to (certain) vertex-- and edge-colored graphs, whereas \autoref{thm:main4} only applies to (all) vertex-colored graphs.

Apart from removing the colors, \autoref{thm:main4} also makes the graphs connected, so it could even be useful for graphs that are already uncolored.
For example, it follows that it is not necessary to require the graphs $G_1$ and $G_2$ in \autoref{thm:main3} to be connected, because any disconnected example can be made connected by applying the decoloring procedure.

\subsection*{Organization of the paper}

\mysecref{sec:preliminaries} covers the preliminaries.

In \mysecref{sec:graph-construction}, we construct, for every linear system $Mx = b$ over $\Z_2$, a vertex-colored graph $G(M,b)$ such that $\Qut(G(M,b))$ is isomorphic (as a compact quantum group) to the dual of the homogeneous solution group $\Gamma(M,0)$.
This is a simplification of an earlier construction given by the second and third author in a previous paper \cite{RSsolution}, which used both vertex and edge colors and as a result could only be decolored under certain conditions.

In \mysecref{sec:decolor}, we provide a general decoloring procedure that turns vertex-colored graphs into uncolored graphs while preserving quantum isomorphism and the quantum automorphism group (see \autoref{thm:main4}).

In \mysecref{sec:uncolored}, we combine the results from \mysecref{sec:graph-construction} and \mysecref{sec:decolor} to construct, for every linear system $Mx = b$ over $\Z_2$, an uncolored graph $G'(M,b)$ such that $\Qut(G'(M,b))$ is isomorphic (as a compact quantum group) to the dual of the homogeneous solution group $\Gamma(M,0)$.
Furthermore, by applying certain modifications to the linear system, we show that the dual of every \emph{inhomogeneous} solution group can also be obtained as the quantum automorphism group of a graph.

In \mysecref{sec:perfect}, we construct a sequence of asymmetric graphs with quantum symmetry, thereby proving our main result (\autoref{thm:main1}).
To do so, we construct a sequence of linear systems whose homogeneous solution groups are perfect (i.e., have trivial abelianizations).
This is our main contribution on the side of solution groups, since no such linear systems were known before.

In \mysecref{sec:asym-qisom}, we construct a pair of asymmetric graphs that are quantum isomorphic but not classically isomorphic, thereby proving \autoref{thm:main3}.
We proceed in a similar fashion as in \mysecref{sec:perfect}, except that we need a solution group with the (stronger) property that the \emph{inhomogeneous} solution group is perfect.
Also, a little extra work is needed to show that the construction from \mysecref{sec:graph-construction} behaves well with respect to quantum isomorphism.

Finally, in \mysecref{sec:frucht}, we prove that the graphs in Frucht's original proof of Frucht's theorem \cite{Frucht} never have quantum symmetry, and we combine this with our main result to prove \autoref{thm:main2} and its consequences.

\section{Preliminaries}
\label{sec:preliminaries}

We denote by $\Z_2$ the integers modulo $2$.
We sometimes view this as an abelian group and sometimes as a finite field.

We now define compact quantum groups, which generalize compact groups.
We will work with the $C^*$\nobreakdash-algebraic definition by Woronowicz \cite{CQG}; see \cite{Timmermann,NeshTu} for more information about this topic.

\begin{definition}
	\label{def:CQG}
	A \emph{compact quantum group} $\mathbb{G}$ is a pair $(C(\mathbb{G}), \Delta)$, where $C(\mathbb{G})$ is a unital $C^*$-algebra and $\Delta: C(\mathbb{G}) \to C(\mathbb{G}) \otimes C(\mathbb{G})$ is a unital $*$-homomorphism with the properties
	\begin{itemize}
		\item $(\Delta \otimes \id) \circ \Delta = (\id \otimes \Delta) \circ \Delta$ (coassociativity);
		\item $\Delta(C(\mathbb{G}))(1 \otimes C(\mathbb{G}))$ and $\Delta(C(\mathbb{G}))(C(\mathbb{G}) \otimes 1)$ are dense in $C(\mathbb{G}) \otimes C(\mathbb{G})$ (cancellation property).
	\end{itemize}
	Here $\mathcal{A}\otimes \mathcal{B}$ denotes the minimal (i.e.{} spatial) tensor product of the $C^*$\nobreakdash-algebras $\mathcal{A}$ and $\mathcal{B}$.
\end{definition}

Every compact (classical) group $\Gamma$ is a compact quantum group by setting $C(\mathbb{G}) = C(\Gamma)$ and $\Delta(f)(g,h) = f(gh)$ for all $f \in C(\Gamma)$ and all $g,h \in \Gamma$.
Conversely, every compact quantum group with $C(\mathbb{G})$ commutative is of this form (e.g.~\cite[Example 1.1.2]{NeshTu}).
A more advanced example of a compact quantum group, which plays a crucial role in this paper, is the following:

\begin{example}[\cite{CMQG1}] 
	\label{Dualqgroup}
	Consider a discrete group $\Gamma$. The associated group $C^*$-algebra $C^*(\Gamma)$ can be written as 
	\begin{align*}
		C^*(\Gamma)=C^*(u_g, g\in \Gamma\,|\, u_g \text{ unitary, }u_{gh}=u_gu_h, u_{g^{-1}}=u^*_g).
	\end{align*}
	Let $\Delta$ be the $*$-homomorphism $\Delta: C^*(\Gamma) \to C^*(\Gamma) \otimes C^*(\Gamma)$, $\Delta(u_g) = u_g \otimes u_g$.
	Then the pair $(C^*(\Gamma), \Delta)$ is a compact quantum group, known as the \emph{\textup(quantum group\textup) dual} of $\Gamma$, denoted $\hat{\Gamma}$.
	If $\Gamma$ is abelian, then $C^*(\Gamma)$ is commutative and $\hat{\Gamma}$ coincides with the Pontryagin dual of $\Gamma$.
\end{example}

Throughout this paper, we will work exclusively with compact quantum groups that are \emph{universal} in the sense of \cite[Definition 5.4.9]{Timmermann} (i.e., $C(\mathbb{G})$ is the universal $C^*$\nobreakdash-algebra of the dense Hopf $*$\nobreakdash-algebra $C[\mathbb{G}]$).
For universal compact quantum groups, isomorphism can be defined as follows.

\begin{definition}
	Let $\mathbb{G}=(C(\mathbb{G}), \Delta_{\mathbb{G}})$ and $\mathbb{H}=(C(\mathbb{H}), \Delta_{\mathbb{H}})$ be universal compact quantum groups. We say that $\mathbb{G}$ and $\mathbb{H}$ are \emph{isomorphic as compact quantum groups} if there exists a $*$-isomorphism $\varphi:C(\mathbb{G})\to C(\mathbb{H})$ such that $\Delta_{\mathbb{H}}\circ \varphi=(\varphi \otimes \varphi)\circ \Delta_{\mathbb{G}}$. In this case, we write $\mathbb{G}\cong \mathbb{H}$.
\end{definition}

A special case of compact quantum groups are so called compact matrix quantum groups. Those generalize all compact matrix groups $\mathrm{G}\subseteq GL_n(\C)$.

\begin{definition}[\cite{CMQG1}]
	A \emph{compact matrix quantum group} $\mathbb{G}$ is a pair $(C(\mathbb{G}),u)$, where $C(\mathbb{G})$ is a unital $C^*$-algebra and $u = (u_{ij}) \in M_n(C(\mathbb{G}))$ is a matrix such that 
	\begin{itemize}
		\item the elements $u_{ij}$, $1 \le i,j \le n$, generate $C(\mathbb{G})$, 
		\item the $*$-homomorphism $\Delta: C(\mathbb{G}) \to C(\mathbb{G}) \otimes C(\mathbb{G})$, $u_{ij} \mapsto \sum_{k=1}^n u_{ik} \otimes u_{kj}$ exists,
		\item the matrix $u$ and its transpose $u^{T}$ are invertible. 
	\end{itemize}
	The matrix $u$ is usually called the \emph{fundamental representation} of $\mathbb{G}$.
\end{definition}

The following compact matrix quantum group was first discovered by Wang and can be considered as a quantum analogue of the symmetric group $S_n$.

\begin{definition}[\cite{WanSn}]
	The \emph{quantum symmetric group} $\bbS_n^+= (C(\bbS_n^+),u)$ is the compact matrix quantum group, where
	\begin{align*}
		C(\bbS_n^+) := C^*(u_{ij}, \, 1 \le i,j \le n \, | \, u_{ij} = u_{ij}^* = u_{ij}^2, \, \sum_{k=1}^n u_{ik} = \sum_{k=1}^n u_{kj} = 1). 
	\end{align*} 
\end{definition}

A matrix $u = [u_{ij}]_{i,j \in [n]}$ with entries from a nonzero unital $C^*$-algebra satisfying $u_{ij} = u_{ij}^* = u_{ij}^2$ and $\sum_{k=1}^n u_{ik} = \sum_{k=1}^n u_{kj} = 1$, as in the definition above, is known as a \emph{magic unitary}.

We are also dealing with free products of quantum groups, thus we need the following definition.\leavevmode
\begin{definition}[{\cite{wang1995free}}]
\label{def:freeprod}
Let $\mathbb{G}=(C(\mathbb{G}), \Delta_{\mathbb{G}})$ and $\mathbb{H}=(C(\mathbb{H}), \Delta_{\mathbb{H}})$ be compact quantum groups. Then, the \emph{free product} $\mathbb{G}\ast \mathbb{H}$ is the compact quantum group consisting of the $C^*$-algebra $C(\mathbb{G}\ast \mathbb{H})=C(\mathbb{G})\ast C(\mathbb{H})$ and the comultiplication 
\begin{align*}
	\Delta_{\mathbb{G}\ast \mathbb{H}}(a)=\Delta_{\mathbb{G}}(a)\quad \text{and}\quad \Delta_{\mathbb{G}\ast \mathbb{H}}(b)=\Delta_{\mathbb{H}}(b)\quad\text{for}\quad a\in C(\mathbb{G}), b\in C(\mathbb{H}).
\end{align*}
Here $C(\mathbb{G})\ast C(\mathbb{H})$ denotes the unital free product of $C^*$-algebras\footnote{Some authors also refer to this as the \emph{free product algamanted over the scalars}.} (see e.g.{} \cite[\S II.8.3.4]{Blackadar-encyclopedia}).
In the case of two compact matrix quantum groups $\mathbb{G}=(C(\mathbb{G}), u)$ and $\mathbb{H}=(C(\mathbb{H}), v)$, we have $\mathbb{G}\ast \mathbb{H}=(C(\mathbb{G})\ast C(\mathbb{H}),u \oplus v)$ (see \cite[Corollary 3.6]{wang1995free}).

\end{definition}

Throughout this paper, we use the following conventions for graphs.
A \emph{graph} is a tuple $G = (V(G),E(G))$ consisting of a finite vertex set $V(G)$ and an edge set $E(G) \subseteq V(G) \times V(G)$ such that $(i,i) \notin E(G)$ for all $i \in V(G)$ and $(i,j) \in E(G)$ if and only if $(j,i) \in E(G)$.
In other words, all graphs are undirected and do not have loops nor parallel edges.
We write $i \sim j$ or $(i,j)\in E(G)$ if the vertices $i$ and $j$ are adjacent and $i \nsim j$ or $(i,j)\notin E(G)$ if they are not.

A \emph{vertex-- and edge-colored graph} is a graph $G$ along with a \emph{coloring function} $c: V(G) \cup E(G) \to S$ for some set $S$.
We refer to $c(x)$ as the color of the vertex or edge $x$ and use $E_c(G)$ to refer to the set of edges of color $c$.
A \emph{vertex-colored graph} is a graph $G$ along with a coloring function $c : V(G) \to S$ for some set $S$.
Every graph can be viewed as a vertex-colored graph where all vertices have the same color, and every vertex-colored graph can be viewed as a vertex-- and edge-colored graph where all edges have the same color.

\begin{definition}[\cite{RSsolution}]
	\label{def:QAG}
	Let $G$ be a vertex-- and edge-colored graph. The \emph{quantum automorphism group} $\Qut(G)$ is the compact matrix quantum group $(C(\Qut(G)), u)$, where $C(\Qut(G))$ is the universal $C^*$-algebra with generators $u_{ij}$, $i,j \in V(G)$ and relations
	\begin{align}
		&u_{ij}=u_{ij}^*=u_{ij}^2, &&i,j \in V(G),\label{rel1}\\
		&\sum_{l} u_{il}= 1 = \sum_{l} u_{li}, &&i\in V(G),\label{rel2}\\
		&u_{ij}=0, &&\text{for all  } i,j \in V(G) \text{ with } c(i)\neq c(j),\label{rel3}\\
		&u_{ij}u_{kl}=0, &&\text{if $(i,k)\in E_c(G)$ and $(j,l)\notin E_c(G)$}\label{rel4}\\
		& &&\text{or vice versa for some edge-color $c$.}\nonumber
	\end{align}
\end{definition}

The automorphism group $\Aut(G)$ of a vertex-- and edge-colored graph $G$ is defined similarly as the group of color-preserving automorphisms of $G$.
The following lemma, whose proof we leave to the reader, shows that $\Aut(G)$ can be recovered from $\Qut(G)$:

\begin{lemma}
	\label{lem:ab}
	Let $G$ be a vertex-- and edge-colored graph.
	Then $C(\Aut(G))$ is isomorphic to the abelianization of $C(\Qut(G))$.
	In particular, if $\Qut(G) \cong \hat{\Gamma}$ for some discrete group $\Gamma$, then $\Aut(G) \cong \widehat{\Gamma_\ab}$, where $\Gamma_\ab$ is the abelianization of $\Gamma$.
\end{lemma}

Quantum isomorphisms of graphs \cite{nonsignalling} generalize graph isomorphisms in a similar way as quantum automorphism groups generalize automorphism groups of graphs. We need the vertex-colored version in this paper: 

\begin{definition}
	\label{def:qiso}
	Let $G$ and $G'$ be vertex-colored graphs whose colorings $c : V(G) \to C$, $c' : V(G') \to C$ take values in a common set of colors $C$, and let $A_G$ and $A_{G'}$ denote their adjacency matrices.
	We say that $G$ and $G'$ are \emph{quantum isomorphic}, denoted $G \cong_q G'$,\footnote{Caveat: this notation is not consistent with common usage in quantum information theory. In particular, in \cite{Brannan-bigalois}, our notion of quantum isomorphism is written $G \cong_{C^*} G'$, and there the notation $G \cong_q G'$ is reserved for the case that there exists a \emph{finite\-/dimensional} $C^*$-algebra $\mathcal A$ as in \autoref{def:qiso}. However, in this paper, we only consider one type of quantum isomorphism, which we denote $G \cong_q G'$ (instead of $G \cong_{C^*} G'$) for simplicity.} if there is a nonzero unital $C^*$\nobreakdash-algebra $\mathcal{A}$ and a magic unitary $u = [u_{ij}]_{i \in V(G), j \in V(G')}$ with entries from $\mathcal{A}$ such that $A_Gu = uA_{G'}$ and $u_{ij} = 0$ whenever $c(i) \neq c'(j)$.
\end{definition}

Note that taking a vertex-colored graph $G$ and permuting the colors results in a graph that is generally \underline{not} quantum isomorphic to $G$.
In fact, for two graphs to be quantum isomorphic, it is necessary that they have the same number of vertices of color $r$, for every color $r \in C$ (see \autoref{rmk:qiso-vertex-counts}).

\section{A vertex-colored graph whose quantum automorphism group is the dual of a solution group}
\label{sec:graph-construction}

Below we give the definition of a solution group associated to a linear system $Mx=b$ over $\mathbb{Z}_2$ from~\cite{CLS}. We will use $1$ to denote the identity element of a group.

\begin{definition}
	\label{def:solgroup}
	Let $M \in \mathbb{Z}_2^{m \times n}$ and $b \in \mathbb{Z}_2^m$ with $b \ne 0$. The \emph{solution group} $\Gamma(M,b)$ of the linear system $Mx = b$ is the finitely presented group generated by elements $x_i$ for $i \in [n]$ and an element $\gamma$ satisfying the following relations:
	\begin{enumerate}
		\item $x_i^2 = 1$ for all $i \in [n]$\label{solgroup:rel1};
		\item $x_ix_j = x_jx_i$ if there exists $k \in [m]$ s.t. $M_{ki} = M_{kj} = 1$;\label{solgroup:rel2}
		\item $\prod_{i: M_{ki} = 1} x_i = \gamma^{b_k}$ for all $k \in [m]$;
		\item $\gamma^2 = 1$;
		\item $x_i\gamma = \gamma x_i$ for all $i \in [n]$.
	\end{enumerate}
	
	If $b = 0$, then we refer to $\Gamma(M,b)$ as the \emph{homogeneous solution group} of the system $Mx = 0$, and define this the same as above except that we add the relation $\gamma = 1$. This is equivalent to removing $\gamma$ from the list of generators, changing the righthand side of the equation in (3) to $1$, and removing items (4) and (5).

	\begin{remark}\label{rem:unigrpalg}
		Let $\Gamma=\Gamma(M,0)$ be the homogeneous solution group of the system $Mx=0$. By \autoref{Dualqgroup}, the group $C^*$-algebra $C^*(\Gamma)$ can be written as 
		\begin{align*}
			C^*(\Gamma)=C^*(x_i, i \in [n]\,|\, &x_i=x_i^*, x_i^2 = 1, x_ix_j = x_jx_i \text{ if there exists $k \in [m]$}\\&\text{ s.t. } M_{ki} = M_{kj} = 1,\prod_{i: M_{ki} = 1} x_i = 1\text{ for all $k \in [m]$}).
		\end{align*}
		Note that we slightly abuse the notation by writing $x_i$ instead of $u_{x_i}$.
	\end{remark}
	
	\begin{remark}
		Let $M\in \mathbb{Z}_2^{m \times n}$ be a linear system with some empty columns, i.e., variables that do not appear in any equation. Reordering the columns, we have
		\begin{align*}
			M=\begin{pmatrix} M' & 0_{m,s}\end{pmatrix},
		\end{align*}
		where $M'\in \mathbb{Z}_2^{m \times n-s}$ and $0_{m,s}\in \mathbb{Z}_2^{m \times s}$ for some $s\leq n$. We see from the previous remark that
		\begin{align*}
			C^*(\Gamma(M,0))&=C^*(\Gamma(M',0))\ast C^*(x_i, i\in [s]\,|\, x_i=x_i^*, x_i^2 = 1)\\
			&=C^*(\Gamma(M',0))\ast C^*(\widehat{\Z_2^{\ast s}}).
		\end{align*}
		Looking at \autoref{def:freeprod}, we see that $\hat{\Gamma}(M,0)=\hat{\Gamma}(M',0)\ast \widehat{\Z_2^{\ast s}}$.
	\end{remark}

	We will denote by $S_k(M)$ the set $\{i \in [n] : M_{ki} = 1\}$ and by $T_i(M)$ the set $\{k \in [m] : M_{ki} = 1\}$, often writing simply $S_k$ and $T_i$ when $M$ is clear from context. We use $\pm 1^S$ to denote the set of functions $\alpha : S \to \{1,-1\}$, and will typically write $\alpha_i$ instead of $\alpha(i)$. We will also use $\pm 1^S_0$ to denote the subset of such functions satisfying $\prod_{i \in S} \alpha_i = 1$, and similarly use $\pm 1^S_1$ for the set of such functions satisfying $\prod_{i \in S} \alpha_i = -1$. Lastly, given $k,l \in [m]$ such that $S_k \cap S_l \ne \varnothing$, for any $\alpha \in \pm 1^{S_k}$ and $\beta \in \pm 1^{S_l}$ we define the function $\alpha \symmdiff \beta \in \pm 1^{S_k \cap S_l}$ as $(\alpha \symmdiff \beta)_i = \alpha_i \beta_i$. Note that
	\[ (\alpha \symmdiff \beta)_i = \alpha_i\beta_i = \begin{cases}
		1 & \text{if } \alpha_i = \beta_i \\
		-1 & \text{if } \alpha_i \ne \beta_i
	\end{cases} \]
\end{definition}

In~\cite{RSsolution}, the authors introduced a vertex-- and edge-colored graph $\widehat{G}(M,b)$ whose quantum automorphism group is isomorphic to the dual of $\Gamma(M,0)$. In order to get an uncolored graph whose quantum automorphism group is isomorphic to the dual of $\Gamma(M,0)$, the authors of~\cite{RSsolution} use a vertex and edge decoloring procedure that produces an uncolored graph with quantum automorphism group isomorphic to that of the colored graph. However, the edge decoloring procedure was quite limited in that it only worked on graphs $G$ where certain entries of the fundamental representation of $\Qut(G)$ satisfy an additional nontrivial commutativity requirement. In this work we remedy this limitation by defining a similar graph $G(M,b)$ which is only vertex-- but not edge-colored but whose quantum automorphism group is isomorphic to the dual of $\Gamma(M,0)$. Since the vertex decoloring procedure of~\cite{RSsolution} works on every vertex-colored graph with minimum degree at least three, we can apply this to $G(M,b)$ for all linear systems $Mx=b$ with $|S_k|\geq 3$ for all $k$.

We now give the definition of $\widehat{G}(M,b)$ from~\cite{RSsolution}\footnote{In~\cite{RSsolution} the graph was denoted $G(M,b)$.}:

\begin{definition}
	\label{def:graph}
	Let $M \in \mathbb{Z}_2^{m \times n}$ and $b \in \mathbb{Z}_2^m$ such that $|T_i|\geq 1$ for all $i\in [n]$. Define the colored graph $\widehat{G} := \widehat{G}(M,b)$ as follows. The vertex set of $\widehat{G}$ is $\left\{(k,\alpha): k \in [m], \ \alpha \in \pm 1^{S_k}_{b_k}\right\}$. The color of a vertex $v = (k,\alpha)$, denoted $c(v)$, is $k$. We also define the subsets $Q_k = \{v \in V(\widehat{G}) : c(v) = k\}$ for all $k \in [m]$, and these partition the vertex set. The vertices of $Q_k$ correspond to the \emph{satisfying} assignments of values to the variables appearing (with nonzero coefficient) in equation $k$ of $Mx=b$ (except that values of 1 and $-1$ are used instead of 0 and 1). For any $l,k \in [m]$ such that $S_l \cap S_k \ne \varnothing$, the graph $\widehat{G}$ contains all (non-loop) edges between $Q_l$ and $Q_k$ (thus each $Q_k$ induces a complete subgraph). Given an edge $e$ between adjacent vertices $(l,\alpha)$ and $(k,\beta)$, the color of $e$, denoted $c(e)$, is equal to the function $\alpha \symmdiff \beta \in \pm 1^{S_l \cap S_k}$.
\end{definition}

\begin{remark}
	We need the assumption $|T_i|\geq 1$ for all $i\in [n]$ (i.e., that $M$ has no zero columns), since our graph construction does not keep track of any variable that does not appear in any equation. This assumption should have already been mentioned explicitly in \cite{RSsolution}. We will give a different graph construction later in this section, which also keeps track of variables that do not appear in any equation. 
\end{remark}

The following theorem was proven in~\cite{RSsolution} and states that $\Qut(\widehat{G}(M,b))$ is the dual of the solution group $\Gamma(M,0)$.

\begin{theorem}[\cite{RSsolution}]\label{thm:qaut2cgamma}
	Let $M \in \mathbb{Z}_2^{m \times n}$ and $b \in \mathbb{Z}_2^m$ such that $|T_i|\geq 1$ for all $i\in [n]$. Set $\widehat{G} = \widehat{G}(M,b)$ and $\Gamma = \Gamma(M,0)$. Let $\Delta_\Gamma$ and $\Delta_{\widehat{G}}$ denote the comultiplications on $C^*(\Gamma)$ and $C(\Qut(\widehat{G}))$ respectively. Then there exists an isomorphism $\varphi : C^*(\Gamma) \to C(\Qut(\widehat{G}))$ such that $\Delta_G \circ \varphi = (\varphi \otimes \varphi) \circ \Delta_\Gamma$. In other words, $\Qut(\widehat{G})$ is isomorphic to the dual of the solution group $\Gamma$.
\end{theorem}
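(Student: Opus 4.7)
The plan is to exhibit an explicit $*$-isomorphism $\varphi$ by defining maps in both directions and checking compatibility with the comultiplication. Throughout, write $\Gamma = \Gamma(M,0)$ for the homogeneous solution group.

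For the forward direction, I would exploit that within each block $Q_k$ the ``classical'' symmetries are given by multiplication by elements of $\pm 1^{S_k}_0$, and for each generator $x_i$ (picking any $k \in T_i$) set
\[
\varphi(x_i) := \frac{1}{2^{|S_k|-1}} \sum_{(k,\alpha),(k,\beta) \in Q_k} \alpha_i\beta_i\, u_{(k,\alpha),(k,\beta)}.
\]
Independence of the choice of $k \in T_i$ should follow from the cross-block edge-color preservation relations, making $\varphi(x_i)$ well-defined. Self-adjointness follows from the symmetry of the coefficient $\alpha_i\beta_i$ under the swap $\alpha\leftrightarrow\beta$. The relations of $\Gamma$ to verify are: \emph{(i)} $\varphi(x_i)^2 = 1$, where one expands using relation \eqref{rel4} applied within $Q_k$, which forces the surviving products $u_{(k,\alpha),(k,\beta)}u_{(k,\gamma),(k,\delta)}$ to satisfy either $(\alpha,\beta) = (\gamma,\delta)$ or $\alpha\symmdiff\gamma = \beta\symmdiff\delta$; in either case the coefficient collapses to $1$, and the magic-unitary row-sum identities evaluate the remainder to $1$; \emph{(ii)} commutativity of $\varphi(x_i)$ and $\varphi(x_j)$ when $i,j\in S_k$, via an analogous calculation; and \emph{(iii)} $\prod_{i\in S_k}\varphi(x_i) = 1$, where the parity constraint $\prod_i\alpha_i = (-1)^{b_k} = \prod_i \beta_i$ defining $\pm 1^{S_k}_{b_k}$ makes the total coefficient $\prod_i \alpha_i\beta_i = 1$ uniformly on the surviving terms.

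For the inverse map, I would exploit that the abelian subgroup $H_k := \langle x_i : i \in S_k\rangle$ of $\Gamma$ has order $2^{|S_k|-1}$ (since $\prod_{i\in S_k}x_i = 1$), with characters naturally in bijection with $\pm 1^{S_k}_0$. Writing $p_k^{\gamma} \in C^*(H_k) \subseteq C^*(\Gamma)$ for the minimal projection corresponding to the character $\gamma$, define
\[
\psi(u_{(k,\alpha),(k,\beta)}) := p_k^{\alpha\symmdiff\beta}, \qquad \psi(u_{(l,\alpha),(k,\beta)}) := 0 \quad \text{for } l \ne k.
\]
Since $\{p_k^\gamma\}_\gamma$ is a partition of unity in $C^*(H_k)$ and different vertex colors are sent to disjoint blocks, the magic-unitary relations \eqref{rel1}, \eqref{rel2} and the color relation \eqref{rel3} are immediate.

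The main obstacle is verifying that $\psi$ respects the edge-color preservation relation \eqref{rel4} across \emph{different} blocks $Q_l$ and $Q_k$. A cross-block edge between $(l,\alpha)$ and $(k,\beta)$ carries color $\alpha\symmdiff\beta \in \pm 1^{S_l\cap S_k}$ recording how each shared variable $x_i$ is ``flipped'', and the spectral decompositions of the $x_i$'s inside $C^*(H_l)$ and $C^*(H_k)$ must agree on the common indices $i \in S_l\cap S_k$. This is exactly where the commutation relation (item (2) of \autoref{def:solgroup}) is invoked, since it ensures that the generators shared by equations $l$ and $k$ live in a common abelian subalgebra inside $C^*(\Gamma)$ where the cross-block consistency can be formulated. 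Once both $\varphi$ and $\psi$ are shown to be well-defined $*$-homomorphisms, checking $\varphi\circ\psi = \mathrm{id}$ and $\psi\circ\varphi = \mathrm{id}$ reduces to computations on generators handled by character orthogonality over $H_k$. Finally, since $\Delta_\Gamma(x_i) = x_i\otimes x_i$ and $\Delta_{\widehat{G}}(u_{vw}) = \sum_z u_{vz}\otimes u_{zw}$, the intertwining $\Delta_{\widehat{G}}\circ\varphi = (\varphi\otimes\varphi)\circ\Delta_\Gamma$ reduces to verifying $\Delta_{\widehat{G}}(\varphi(x_i)) = \varphi(x_i)\otimes\varphi(x_i)$ on generators, which comes out of a direct computation using the magic-unitary row-sum identities.
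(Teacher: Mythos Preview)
The paper does not give its own proof of this theorem; it is quoted verbatim from \cite{RSsolution}, so there is no in-paper argument to compare against directly. That said, the proof of \autoref{thm:qautG2qautGhat} reveals enough of the machinery from \cite{RSsolution} to see that your outline matches it closely. In \cite{RSsolution} one first proves that $\hat{q}^{(k)}_{\alpha,\beta}$ depends only on $\alpha\symmdiff\beta$ (this is exactly your observation that relation~\eqref{rel4} within $Q_k$ kills $u_{(k,\alpha),(k,\beta)}u_{(k,\gamma),(k,\delta)}$ unless $\alpha\symmdiff\gamma=\beta\symmdiff\delta$), writes $\hat{q}^{(k)}_\delta$ for the common value, and sets $y_i^{(k)}:=\sum_{\delta}\delta_i\,\hat{q}^{(k)}_\delta$. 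A short computation shows your $\varphi(x_i)$ equals this $y_i^{(k)}$, so the two forward maps coincide. The inverse map in \cite{RSsolution} is likewise built from the spectral projections $\prod_{i\in S_k}\tfrac{1+\gamma_i x_i}{2}$ in $C^*(\Gamma)$, which are precisely your $p_k^{\gamma}$.

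Your sketch is essentially correct. Two small comments. First, your verifications of $\varphi(x_i)^2=1$ and especially $\prod_{i\in S_k}\varphi(x_i)=1$ become substantially cleaner if you isolate the lemma ``$\hat{q}^{(k)}_{\alpha,\beta}$ depends only on $\alpha\symmdiff\beta$'' as a standalone step rather than hiding it inside the coefficient-collapse argument; once you have the partition of unity $\sum_{\delta}\hat{q}^{(k)}_\delta=1$ with pairwise orthogonal projections, all three solution-group relations are one-line checks. Second, in your cross-block verification for $\psi$, note that you do \emph{not} need all $x_j$ ($j\in S_l$) to commute with all $x_{j'}$ ($j'\in S_k$); it suffices that the shared variable $x_i$ with $i\in S_l\cap S_k$ commutes with both families, so that the orthogonal factors $\tfrac{1+\gamma_i x_i}{2}$ and $\tfrac{1+\delta_i x_i}{2}$ can be brought adjacent. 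This is automatic since $i\in S_l$ and $i\in S_k$, so your identification of relation~(2) from \autoref{def:solgroup} as the key input is accurate but the commutation needed is slightly weaker than full commutation between the blocks.
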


It is easy to see that due to the vertex colors, the fundamental representation of $\Qut(\widehat{G}(M,b))$ has a block diagonal form as expressed in the following lemma from~\cite{RSsolution}:

\begin{lemma}
	Let $M \in \mathbb{Z}_2^{m \times n}$ and $b \in \mathbb{Z}_2^m$ such that $|T_i|\geq 1$ for all $i\in [n]$. Set $\widehat{G} = \widehat{G}(M,b)$. If $\hat{u}$ is the fundamental representation of $\Qut(G)$, then $\hat{u}$ has the form
	\[\bigoplus_{k \in [m]} \hat{q}^{(k)}\]
	where each $\hat{q}^{(k)}$ is a magic unitary indexed by $Q_k$.
\end{lemma}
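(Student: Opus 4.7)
The plan is to derive the block structure directly from the vertex-coloring relations in the definition of $\Qut$. Recall that every vertex $v$ of $\widehat{G}$ carries a color $c(v) = k$ for exactly one $k \in [m]$, and that the sets $Q_k$ partition $V(\widehat{G})$. Relation \eqref{rel3} in \autoref{def:QAG} forces $\hat{u}_{vw} = 0$ whenever $c(v) \ne c(w)$, i.e., whenever $v \in Q_k$ and $w \in Q_l$ with $k \ne l$. Reordering the rows and columns of $\hat{u}$ so that vertices within the same color class are grouped together, this immediately yields the block diagonal decomposition
\[ \hat{u} = \bigoplus_{k \in [m]} \hat{q}^{(k)}, \qquad \hat{q}^{(k)} := [\hat{u}_{vw}]_{v,w \in Q_k}. \]

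It remains to check that each block $\hat{q}^{(k)}$ is itself a magic unitary over $C(\Qut(\widehat{G}))$. The projection condition $\hat{u}_{vw} = \hat{u}_{vw}^* = \hat{u}_{vw}^2$ is inherited from \eqref{rel1}. For row sums, fix $v \in Q_k$; by \eqref{rel2} we have $\sum_{w \in V(\widehat{G})} \hat{u}_{vw} = 1$, and since all summands with $w \notin Q_k$ vanish by the argument above, we obtain $\sum_{w \in Q_k} \hat{u}_{vw} = 1$. The column sum is analogous. Thus $\hat{q}^{(k)}$ satisfies all the defining relations of a magic unitary indexed by $Q_k$, completing the proof. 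There is no real obstacle here: the entire argument is a direct bookkeeping consequence of the vertex-colored magic-unitary relations, once one observes that the color classes $Q_k$ are precisely the fibers of $c$.
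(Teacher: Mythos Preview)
Your proof is correct and matches the paper's approach: the paper does not give a detailed argument for this lemma, merely noting that ``it is easy to see that due to the vertex colors'' the fundamental representation has the stated block diagonal form (and citing \cite{RSsolution}). Your write-up is precisely the bookkeeping the paper leaves implicit.
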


Note that we will consistently use $\hat{u}$ to denote the fundamental representation of $\Qut(\widehat{G}(M,b))$, and we will use and $\hat{q}^{(k)}$ to denote the sub magic unitaries from the lemma above.

We now define our vertex-colored graph $G(M,b)$ which is similar to $\widehat{G}(M,b)$, but has additional vertices and a different edge set.

\begin{definition}
	\label{defgraph}
	Let $M \in \mathbb{Z}_2^{m \times n}$ and $b \in \mathbb{Z}_2^m$. Define the vertex-colored graph $G := G(M,b)$ as follows. The vertex set of $G$ is
	\[\big\{(i,s) : i \in [n], \ s \in \{\pm 1\} \big\} \ \bigcup \ \left\{(k,\alpha): k \in [m], \ \alpha \in \pm 1^{S_k}_{b_k}\right\}.\]
	As in the definition of $\widehat{G}(M,b)$, the vertices on the right above represent the satisfying assignments of values to the variables appearing with nonzero coefficient in a given equation of $Mx=b$, whereas the vertices in the set on the left represent the possible assignments of values to a given variable $x_i$ in the system $Mx=b$. The color of a vertex $(i,s)$ of the latter type denoted is $(i,0)$ and the color of a vertex $(k, \alpha)$ of the former type is $(k,1)$. We denote the color of a vertex $v$ by $c(v)$. We also define the subsets $Q_k = \{v \in V(G) : c(v) = (k,1)\}$ for all $k \in [m]$ and $V_i = \{v \in V(G) : c(v) = (i,0)\}$ for all $i \in [n]$, and these partition the vertex set. The notation $V_i$ is meant to refer to variable $x_i$, and $Q_k$ is meant to refer to the $k^\text{th}$ equation in the system $Mx=b$. For any $k \in [m]$ and $i \in S_k$, the graph $G$ has an edge between $(k,\alpha)$ and $(i,\alpha_i)$. In other words, there is an edge between a vertex corresponding to an assignment to variable $x_i$ and a vertex corresponding to an assignment for equation $k$ if they agree on the value assigned to $x_i$.
\end{definition}

\begin{remark}
	Note that the sets $Q_k$ defined above are precisely the sets $Q_k$ from the definition of $\widehat{G}(M,b)$. Thus the vertices of $\widehat{G}(M,b)$ are simply the vertices of $G(M,b)$ that correspond to equations.
\end{remark}

Before working with $G(M,b)$, we state a lemma and a corollary we need later on.

\begin{lemma}
	\label{lem:resofid}
	Let $\mathcal{A}$ be a unital $C^*$-algebra and suppose that $a_1, \ldots, a_n \in \mathcal{A}$ and $b_1, \ldots, b_n \in \mathcal{A}$ are all projections such that $\sum_{i=1}^n a_i = 1$ and $\sum_{i=1}^n b_i = 1$. Then the following are equivalent
	\begin{enumerate}
		\item $a_ib_i = a_i$ for all $i \in [n]$;
		\item $a_ib_i = b_i$ for all $i \in [n]$;
		\item $a_ib_j = 0$ if $i \ne j$;
		\item $a_i = b_i$ for all $i \in [n]$.
	\end{enumerate}
\end{lemma}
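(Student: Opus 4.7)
The plan is to close a cycle of implications, with the ``easy'' directions $(4) \Rightarrow (1),(2),(3)$ and $(3) \Rightarrow (1),(2)$ dispatched directly, and the main work concentrated in $(1) \Rightarrow (4)$ (with $(2) \Rightarrow (4)$ proved symmetrically). The direction $(4) \Rightarrow (3)$ is the only nontrivial trivial direction: from $a_i = b_i$ it suffices to show $a_i a_j = 0$ for $i \ne j$, which follows from the standard fact that projections summing to $1$ are automatically pairwise orthogonal. Indeed, multiplying $\sum_j a_j = 1$ on both sides by $a_i$ gives $\sum_{j \ne i} a_i a_j a_i = a_i - a_i = 0$, and as each summand $a_i a_j a_i = (a_i a_j)(a_i a_j)^*$ is positive, each must vanish, whence $a_i a_j = 0$. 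The implications $(3) \Rightarrow (1)$ and $(3) \Rightarrow (2)$ are immediate by expanding $a_i = a_i \cdot 1 = \sum_j a_i b_j$ and $b_i = 1 \cdot b_i = \sum_j a_j b_i$ respectively, using $(3)$ to kill the off-diagonal terms.

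The heart of the lemma is $(1) \Rightarrow (4)$. Assume $a_i b_i = a_i$ for all $i$. Taking adjoints, $b_i a_i = (a_i b_i)^* = a_i^* = a_i$ as well. A direct computation then gives
\[
(b_i - a_i)^2 = b_i^2 - b_i a_i - a_i b_i + a_i^2 = b_i - a_i - a_i + a_i = b_i - a_i,
\]
and $(b_i - a_i)^* = b_i - a_i$, so each $b_i - a_i$ is a projection and in particular a positive element of $\mathcal{A}$. Summing over $i \in [n]$ yields
\[
\sum_{i=1}^n (b_i - a_i) = \sum_{i=1}^n b_i - \sum_{i=1}^n a_i = 1 - 1 = 0.
\]
A finite sum of positive elements that equals $0$ forces each summand to be $0$ (since $0 \le b_i - a_i \le \sum_j (b_j - a_j) = 0$), so $a_i = b_i$ for every $i$, which is $(4)$. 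The implication $(2) \Rightarrow (4)$ is obtained by the same argument with the roles of $a_i$ and $b_i$ swapped: from $a_i b_i = b_i$ and its adjoint $b_i a_i = b_i$, one shows $a_i - b_i$ is a projection and sums to $0$.

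I do not anticipate any serious obstacle; the only conceptual ingredient beyond bookkeeping is the observation that if $p, q$ are projections with $pq = p$ then $q - p$ is itself a projection, combined with the $C^*$-algebraic fact that positive elements summing to zero must each vanish. Everything else is a one-line expansion.
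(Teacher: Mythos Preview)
Your proof is correct. The paper itself does not give a detailed argument for this lemma; it simply states that ``the proof is routine and follows from basic properties of projections in $C^*$-algebras.'' Your write-up supplies exactly those routine details, and the key step you isolate---that $a_ib_i = a_i$ forces $b_i - a_i$ to be a projection, whence the vanishing sum $\sum_i (b_i - a_i) = 0$ forces each term to vanish---is a clean and standard way to close the main implication.
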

\noindent
The proof is routine and follows from basic properties of projections in $C^*$-algebras.

\begin{corollary}
	\label{cor:resofid}
	Let $\mathcal{A}$ be a unital $C^*$-algebra and suppose that $a_1, \ldots, a_n \in \mathcal{A}$ are projections such that $\sum_{i=1}^n a_i = 1$ and $b_1, \ldots, b_n \in \mathcal{A}$ are pairwise orthogonal projections such that $a_ib_i = a_i$ for all $i \in [n]$. Then $\sum_{i=1}^n b_i = 1$ and therefore $a_i = b_i$ for all $i \in [n].$
\end{corollary}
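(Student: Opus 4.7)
The plan is to first show that the sum $b \coloneqq \sum_{i=1}^n b_i$ is a projection and then identify it with $1 \in \mathcal{A}$. Since the $b_i$ are pairwise orthogonal projections, $b$ is self-adjoint and $b^2 = \sum_{i,j} b_i b_j = \sum_i b_i = b$, so $b$ is indeed a projection. The goal reduces to showing $b = 1$, after which the final conclusion $a_i = b_i$ follows immediately from \autoref{lem:resofid} (the hypothesis $a_i b_i = a_i$ together with $\sum_i a_i = 1 = \sum_i b_i$ is exactly condition~(1) of that lemma, which implies condition~(4)).

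To show $b = 1$, I first upgrade the hypothesis $a_i b_i = a_i$ to a two-sided identity. Taking adjoints of $a_i b_i = a_i$ and using that $a_i$ and $b_i$ are self-adjoint gives $b_i a_i = a_i$ as well. Combining this with the pairwise orthogonality of the $b_j$'s, for $j \neq i$ we compute
\[
b_j a_i \;=\; b_j (b_i a_i) \;=\; (b_j b_i) a_i \;=\; 0.
\]
Therefore $b\, a_i = \sum_{j} b_j a_i = b_i a_i = a_i$ for every $i \in [n]$.

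Now I sum over $i$ and use $\sum_i a_i = 1$:
\[
b \;=\; b \cdot 1 \;=\; b \sum_{i=1}^n a_i \;=\; \sum_{i=1}^n b\, a_i \;=\; \sum_{i=1}^n a_i \;=\; 1,
\]
which gives $\sum_{i=1}^n b_i = 1$ as claimed. Since the $a_i$ and $b_i$ now satisfy all the hypotheses of \autoref{lem:resofid} together with condition~(1), condition~(4) of that lemma yields $a_i = b_i$ for all $i \in [n]$.

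There is no real obstacle here: the only subtle point is remembering to take adjoints to pass from $a_i b_i = a_i$ to $b_i a_i = a_i$, which is what unlocks the orthogonality computation $b_j a_i = 0$ for $j \neq i$. Everything else is routine manipulation of projections.
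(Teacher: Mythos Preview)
Your proof is correct and follows essentially the same approach as the paper: show $b_j a_i = 0$ (respectively $a_i b_j = 0$) for $i \neq j$ via the orthogonality of the $b_j$'s, sum to get $\sum_j b_j = 1$, and then invoke \autoref{lem:resofid}. The only cosmetic difference is that the paper works directly with $a_i b_j = (a_i b_i) b_j = 0$, avoiding the adjoint step, and skips the (unnecessary) verification that $b$ is a projection.
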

\begin{proof}
	First, for $i \ne j$ we have that $a_ib_j = a_ib_ib_j = 0$. Therefore,
	\[\sum_{j=1}^n b_j = \left(\sum_{i=1}^n a_i\right) \left(\sum_{j=1}^n b_j\right) = \sum_{i=1}^n a_ib_i = \sum_{i=1}^n a_i = 1.\]
	Thus it follows from \autoref{lem:resofid} that $a_i = b_i$ for all $i \in [n]$.
\end{proof}

We will now have a look at the quantum automorphism group of $G(M,b)$. As with $\widehat{G}(M,b)$, it is immediate from the vertex colors of $G(M,b)$ that the fundamental representation of its quantum automorphism group has a block diagonal form described as follows:

\begin{lemma}
	\label{lem:qutGdecomp}
	Let $M \in \mathbb{Z}_2^{m \times n}$ and $b \in \mathbb{Z}_2^m$. Set $G = G(M,b)$. If $u$ is the fundamental representation of $\Qut(G)$, then $u$ has the form
	\[\left(\bigoplus_{i \in [n]} v^{(i)}\right) \oplus \left(\bigoplus_{k \in [m]} q^{(k)}\right)\]
	where $v^{(i)}$ and $q^{(k)}$ are magic unitaries indexed by $V_i$ and $Q_k$ respectively.
\end{lemma}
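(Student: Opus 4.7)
The plan is to exploit the vertex coloring of $G(M,b)$ together with the defining relations of $\Qut(G)$ from \autoref{def:QAG}. The color classes are precisely the sets $V_i$ (color $(i,0)$) for $i \in [n]$ and $Q_k$ (color $(k,1)$) for $k \in [m]$, and they partition $V(G)$. By relation~\eqref{rel3}, any entry $u_{xy}$ with $c(x) \neq c(y)$ is zero, so after ordering the vertices so that each color class is a contiguous block, the matrix $u$ is block diagonal with one block per color class. This gives the direct sum structure claimed, with a block $v^{(i)}$ indexed by $V_i$ and a block $q^{(k)}$ indexed by $Q_k$.

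Next I would verify that each diagonal block is itself a magic unitary. The entries are projections by relation~\eqref{rel1}. For the row and column sum conditions: pick any vertex $x \in V_i$. Relation~\eqref{rel2} says $\sum_{y \in V(G)} u_{xy} = 1$, but every term with $y \notin V_i$ vanishes, so $\sum_{y \in V_i} u_{xy} = 1$. The same argument gives $\sum_{y \in V_i} u_{yx} = 1$ and the analogous identities within each $Q_k$. Hence $v^{(i)}$ and $q^{(k)}$ are magic unitaries indexed by their respective color classes.

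There is no real obstacle here; the statement is essentially a direct consequence of the definition of the quantum automorphism group of a vertex-colored graph. The only thing to be mindful of is to invoke only the coloring relation~\eqref{rel3} and the row/column sum relation~\eqref{rel2}, without appealing to the edge relations~\eqref{rel4}, since the edge structure of $G(M,b)$ plays no role in the block-diagonal decomposition itself.
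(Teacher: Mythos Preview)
Your proposal is correct and matches the paper's approach exactly: the paper simply states that the block-diagonal form is ``immediate from the vertex colors of $G(M,b)$'' without giving further details, and your argument using relations~\eqref{rel3}, \eqref{rel1}, and~\eqref{rel2} is precisely the routine verification that makes this immediate.
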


Note that the magic unitaries $v^{(i)}$ for $i \in [m]$ are $2 \times 2$ magic unitaries since $|V_i| = 2$. Therefore, we have that
\begin{equation}\label{eq:2x2}
	v^{(i)}_{(i,1),(i,1)} = v^{(i)}_{(i,-1),(i,-1)} = 1 - v^{(i)}_{(i,1),(i,-1)} = 1 - v^{(i)}_{(i,-1),(i,1)}.
\end{equation}
To shorten our notation we will thus use $v^{(i)}_{1}$ to refer to the above and we will set
\[v^{(i)}_{-1} := 1-v^{(i)}_1 = v^{(i)}_{(i,1),(i,-1)} = v^{(i)}_{(i,-1),(i,1)}.\]
Note that this means that
\begin{equation}
	v^{(i)}_s = v^{(i)}_{(i,t),(i,st)} \text{ for all } s,t \in \{\pm 1\}. \label{eq:vis}
\end{equation}
We will also use $q^{(k)}_{\alpha,\beta}$ to denote $u_{(k,\alpha),(k,\beta)}$. 

We now show several relations among the generators of $\Qut(G(M,b))$.

\begin{lemma}\label{lem:qvprod}
	Let $M \in \mathbb{Z}_2^{m \times n}$ and $b \in \mathbb{Z}_2^m$. Set $G = G(M,b)$ and let $u$ be the fundamental representation of $\Qut(G)$. For any $k \in [m]$ and $i \in S_k$, we have that
	\begin{equation}
		q^{(k)}_{\alpha,\beta}v^{(i)}_s = v^{(i)}_sq^{(k)}_{\alpha,\beta} = \begin{cases}
			q^{(k)}_{\alpha,\beta} & \text{if } \alpha_i\beta_i = s \\
			0 & \text{otherwise}
		\end{cases}
	\end{equation}
\end{lemma}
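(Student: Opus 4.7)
The plan is to use the edge-relations \eqref{rel4} for $\Qut(G)$ together with the identity $\sum_{t'\in\{\pm 1\}} v^{(i)}_{(i,t),(i,t')} = 1$ coming from the fact that $v^{(i)}$ is a magic unitary. Recall from \autoref{defgraph} that there is an edge between $(k,\gamma) \in Q_k$ and $(i,t) \in V_i$ exactly when $t = \gamma_i$. Therefore, for fixed $\alpha,\beta \in \pm 1^{S_k}_{b_k}$ and fixed $i \in S_k$, the pairs $(t,t') \in \{\pm 1\}^2$ for which the edge $(k,\alpha)(i,t)$ is present but $(k,\beta)(i,t')$ is absent (or vice versa) are exactly those with $t = \alpha_i$ and $t' = -\beta_i$, or $t = -\alpha_i$ and $t' = \beta_i$. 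So \eqref{rel4} gives
\[
q^{(k)}_{\alpha,\beta}\,v^{(i)}_{(i,\alpha_i),(i,-\beta_i)} = q^{(k)}_{\alpha,\beta}\,v^{(i)}_{(i,-\alpha_i),(i,\beta_i)} = 0.
\]

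From here I would multiply $q^{(k)}_{\alpha,\beta}$ on the right by the resolution of the identity $\sum_{t'} v^{(i)}_{(i,\alpha_i),(i,t')} = 1$. The term with $t' = -\beta_i$ vanishes by the above, so
\[
q^{(k)}_{\alpha,\beta} \;=\; q^{(k)}_{\alpha,\beta}\,v^{(i)}_{(i,\alpha_i),(i,\beta_i)}.
\]
By \eqref{eq:vis} with $t=\alpha_i$ and $s = \alpha_i\beta_i$ (so that $st = \beta_i$), the right factor equals $v^{(i)}_{\alpha_i\beta_i}$, giving $q^{(k)}_{\alpha,\beta}\,v^{(i)}_{\alpha_i\beta_i} = q^{(k)}_{\alpha,\beta}$. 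For the other value $s = -\alpha_i\beta_i$, I use $v^{(i)}_{-\alpha_i\beta_i} = 1 - v^{(i)}_{\alpha_i\beta_i}$ to conclude $q^{(k)}_{\alpha,\beta}\,v^{(i)}_s = q^{(k)}_{\alpha,\beta} - q^{(k)}_{\alpha,\beta} = 0$. This establishes the formula for $q^{(k)}_{\alpha,\beta}\,v^{(i)}_s$.

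Finally, to obtain the identity with the factors swapped, I would take adjoints. Since $v^{(i)}_s$ and $q^{(k)}_{\alpha,\beta}$ are self-adjoint (they are entries of magic unitaries, hence projections), $(q^{(k)}_{\alpha,\beta}\,v^{(i)}_s)^* = v^{(i)}_s\,q^{(k)}_{\alpha,\beta}$, and the right-hand side of the displayed case distinction is also self-adjoint, so the same formula holds on the other side.

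I don't expect any real obstacle here: the only thing to be careful about is the bookkeeping of signs when applying \eqref{eq:vis}, and to notice that the two ``forbidden'' edge-configurations from \eqref{rel4} (one edge present, the other absent) correspond precisely to mismatches at the coordinate $i$, which is exactly what $\alpha_i\beta_i \ne s$ encodes.
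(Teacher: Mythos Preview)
Your proof is correct and follows essentially the same approach as the paper: both use the edge relation \eqref{rel4} together with \eqref{eq:vis} to kill the mismatched product, then invoke $v^{(i)}_{1}+v^{(i)}_{-1}=1$ to obtain the remaining case, and finally take adjoints for the reversed order. The only cosmetic difference is that the paper first establishes the zero case and deduces $q^{(k)}_{\alpha,\beta}v^{(i)}_{\alpha_i\beta_i}=q^{(k)}_{\alpha,\beta}$ from it, whereas you do these two steps in the opposite order.
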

\begin{proof}
	Suppose that $\alpha_i\beta_i \ne s$. Then $\beta_i \ne s\alpha_i$. We have that $(k,\alpha) \sim (i,\alpha_i)$, $(k,\beta) \sim (i,\beta_i)$, but $(k,\beta) \nsim (i,s\alpha_i)$. Thus, applying Equation~\eqref{eq:vis} with $t = \alpha_i$, we obtain
	\[q^{(k)}_{\alpha,\beta}v^{(i)}_{s} = q^{(k)}_{\alpha,\beta}v^{(i)}_{(i,\alpha_i),(i,s\alpha_i)} = 0.\]
	This proves the second case of the lemma (to obtain the result for $v^{(i)}_sq^{(k)}_{\alpha,\beta}$ simply apply the $^*$ operation to the above result), and the first case follows from this and the fact that $v^{(i)}_{s} + v^{(i)}_{-s} = 1$.
\end{proof}

\begin{lemma}\label{lem:qkcommute}
	Let $M \in \mathbb{Z}_2^{m \times n}$ and $b \in \mathbb{Z}_2^m$. Set $G = G(M,b)$ and let $u$ be the fundamental representation of $\Qut(G)$. Then $q^{(k)}_{\alpha,\beta}$ depends only on $k$ and the value of $\alpha \symmdiff \beta$ and therefore the entries of $q^{(k)}$ pairwise commute for each $k \in [m]$.
\end{lemma}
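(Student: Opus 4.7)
The plan is to prove both claims in tandem by first showing that $q^{(k)}_{\alpha,\beta}$ and $q^{(k)}_{\alpha',\beta'}$ are orthogonal whenever $\alpha \symmdiff \beta \neq \alpha' \symmdiff \beta'$, and then using this orthogonality together with the magic unitary relations to deduce that $q^{(k)}_{\alpha,\beta}$ only depends on $\gamma := \alpha \symmdiff \beta$. Commutativity will then follow immediately, since two entries of $q^{(k)}$ are either equal (when their $\symmdiff$-values coincide) or orthogonal (otherwise), and in either case they commute.

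For the orthogonality step, suppose $\alpha \symmdiff \beta$ and $\alpha' \symmdiff \beta'$ differ at some coordinate $i \in S_k$. By \autoref{lem:qvprod}, we have the one-sided absorptions
\[ q^{(k)}_{\alpha,\beta} = q^{(k)}_{\alpha,\beta}\, v^{(i)}_{(\alpha \symmdiff \beta)_i} \quad \text{and} \quad q^{(k)}_{\alpha',\beta'} = v^{(i)}_{(\alpha' \symmdiff \beta')_i}\, q^{(k)}_{\alpha',\beta'}. \]
Multiplying and using that $v^{(i)}_{+1}$ and $v^{(i)}_{-1}$ are orthogonal (they form a $2\times 2$ magic unitary as in \eqref{eq:2x2}) gives $q^{(k)}_{\alpha,\beta}\, q^{(k)}_{\alpha',\beta'} = 0$. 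Taking adjoints yields orthogonality from the other side as well.

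Next, fix $\gamma \in \pm 1^{S_k}_0$ and suppose $\alpha, \alpha' \in \pm 1^{S_k}_{b_k}$ both give rise to $\gamma$, i.e.\ $\beta = \alpha\gamma$ and $\beta' = \alpha'\gamma$ (pointwise product). Applying the row sum $\sum_\delta q^{(k)}_{\alpha',\delta} = 1$ on the right of $q^{(k)}_{\alpha,\beta}$, every term except $\delta = \beta'$ is annihilated by the orthogonality just established, so
\[ q^{(k)}_{\alpha,\beta} = q^{(k)}_{\alpha,\beta}\, q^{(k)}_{\alpha',\beta'}. \]
Because both factors are self-adjoint projections, taking adjoints gives $q^{(k)}_{\alpha,\beta} = q^{(k)}_{\alpha',\beta'}\, q^{(k)}_{\alpha,\beta}$. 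Swapping the roles of the two pairs yields $q^{(k)}_{\alpha',\beta'} = q^{(k)}_{\alpha',\beta'}\, q^{(k)}_{\alpha,\beta}$, and hence $q^{(k)}_{\alpha,\beta} = q^{(k)}_{\alpha',\beta'}$. This proves that $q^{(k)}_{\alpha,\beta}$ depends only on $k$ and $\gamma = \alpha \symmdiff \beta$, and combined with the orthogonality in the previous paragraph, any two entries of $q^{(k)}$ commute.

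The argument is essentially routine once one has \autoref{lem:qvprod}; the only subtle point is remembering to take adjoints to convert the one-sided identity $q^{(k)}_{\alpha,\beta} = q^{(k)}_{\alpha,\beta}\, q^{(k)}_{\alpha',\beta'}$ into its mirror image, and I do not anticipate a real obstacle beyond verifying that the bookkeeping with $\pm 1^{S_k}_{b_k}$ and $\pm 1^{S_k}_0$ (via the bijection $\delta \mapsto \alpha' \symmdiff \delta$) works out.
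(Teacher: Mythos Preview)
Your proof is correct and follows essentially the same route as the paper's. The only cosmetic difference is in the second step: after obtaining $q^{(k)}_{\alpha,\beta} = q^{(k)}_{\alpha,\beta}\,q^{(k)}_{\alpha',\beta'}$ from the row sum, the paper instead inserts a \emph{column} sum $\sum_{\beta''} q^{(k)}_{\alpha,\beta''} = 1$ on the left of $q^{(k)}_{\alpha',\beta'}$ to get $q^{(k)}_{\alpha,\beta}\,q^{(k)}_{\alpha',\beta'} = q^{(k)}_{\alpha',\beta'}$ directly, whereas you reach the same conclusion via adjoints and symmetry; both are standard magic-unitary manipulations and equally valid.
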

\begin{proof}
	Let $k \in [m]$ and $\alpha,\beta \in \pm 1^{S_k}_{b_k}$. We first show that $q^{(k)}_{\alpha,\beta}q^{(k)}_{\alpha',\beta'} = 0$ for any $\alpha', \beta' \in \pm 1^{S_k}_{b_k}$ such that $\alpha \symmdiff \beta \ne \alpha' \symmdiff \beta'$. In this case, there exists $i \in S_k$ such that $\alpha_i\beta_i \ne \alpha'_i\beta'_i$. We then have that
	\[q^{(k)}_{\alpha,\beta}q^{(k)}_{\alpha',\beta'} = q^{(k)}_{\alpha,\beta}(v^{(i)}_1 + v^{(i)}_{-1})q^{(k)}_{\alpha',\beta'} = q^{(k)}_{\alpha,\beta} v^{(i)}_1 q^{(k)}_{\alpha',\beta'} + q^{(k)}_{\alpha,\beta} v^{(i)}_{-1} q^{(k)}_{\alpha',\beta'}.\]
	But since $\alpha_i\beta_i \ne \alpha'_i\beta'_i$, each of the terms in the last sum is zero by \autoref{lem:qvprod}. Thus we have proven our claim.
	
	Now suppose that $\alpha,\beta, \tilde{\alpha}, \tilde{\beta} \in \pm 1^{S_k}_{b_k}$ are such that $\alpha \symmdiff \beta = \tilde{\alpha} \symmdiff \tilde{\beta}$. Note then that $q^{(k)}_{\alpha,\beta}q^{(k)}_{\tilde{\alpha},\beta'} = 0$ for any $\beta' \ne \tilde{\beta}$ by the above. Therefore,
	\[q^{(k)}_{\alpha,\beta} = q^{(k)}_{\alpha,\beta}\left(\sum_{\beta' \in \pm 1^{S_k}_{b_k}} q^{(k)}_{\tilde{\alpha},\beta'}\right) = q^{(k)}_{\alpha,\beta}q^{(k)}_{\tilde{\alpha},\tilde{\beta}} = \left(\sum_{\beta' \in \pm 1^{S_k}_{b_k}} q^{(k)}_{\alpha,\beta'}\right)q^{(k)}_{\tilde{\alpha},\tilde{\beta}} = q^{(k)}_{\tilde{\alpha},\tilde{\beta}},\]
	as desired. It follows from this that the magic unitary $q^{(k)}$ has the same set of entries in every row (just in different order) and therefore they all commute.
\end{proof}

\begin{definition}\label{def:vdelta}
	Since $q^{(k)}_{\alpha,\beta}$ depends only on the value of $\alpha \symmdiff \beta \in \pm 1^{S_k}_0$, for each $\delta \in \pm 1^{S_k}_0$ we define $q^{(k)}_\delta$ to be equal to $q^{(k)}_{\alpha,\beta}$ for some $\alpha,\beta \in \pm 1^{S_k}_{b_k}$ such that $\alpha \symmdiff \beta = \delta$ (note that such $\alpha$ and $\beta$ always exist, since we may take arbitrary $\alpha$ and let $\beta = \alpha \symmdiff \delta$).
\end{definition}

\begin{remark}
	Note that the set $\{q^{(k)}_\delta : \delta \in \pm 1^{S_k}_0\}$ is precisely the set of elements in every row/column of $q^{(k)}$, and thus
	\begin{equation}\label{eq:qdeltaid}
		\sum_{\delta \in \pm 1^{S_k}_0} q^{(k)}_\delta = 1.
	\end{equation}
\end{remark}

\begin{lemma}
	\label{lem:2gens}
	Let $M \in \mathbb{Z}_2^{m \times n}$ and $b \in \mathbb{Z}_2^m$ such that $|T_i|\geq 1$ for all $i\in [n]$. Set $G = G(M,b)$ and let $u$ be the fundamental representation of $\Qut(G)$. For any $k \in [m]$, $i \in S_k$, 
	\begin{equation}\label{eq:vfromq}
		v^{(i)}_s = \sum_{\delta \in \pm 1^{S_k}_{0}:\delta_i = s} q^{(k)}_{\delta},
	\end{equation}
	where $q^{(k)}_\delta$ is as defined in \autoref{def:vdelta}. It follows that $v^{(i)}_s$ and $v^{(j)}_t$ commute for all $i,j \in S_k$ and $s,t \in \{\pm 1\}$. Furthermore, for any $\delta \in \pm 1^{S_k}_0$,
	\begin{equation}\label{eq:qfromv}
		q^{(k)}_\delta = \prod_{i \in S_k}v^{(i)}_{\delta_i}.
	\end{equation}
	Therefore, the $C^*$-algebra $C(\Qut(G))$ is generated by each of the sets $\{q^{(k)}_\delta : k \in [m], \ \delta \in \pm 1^{S_k}_0\}$ and $\{v^{(i)}_s : i \in [n], \ s \in \{\pm 1\}\}$.
\end{lemma}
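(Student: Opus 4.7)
The plan is to prove the four claims of the lemma in sequence: Equation~\eqref{eq:vfromq}, the commutativity statement, Equation~\eqref{eq:qfromv}, and the generation claim. All four will follow essentially formally from \autoref{lem:qvprod}, \autoref{lem:qkcommute}, \autoref{lem:qutGdecomp}, \autoref{lem:resofid}, and identity~\eqref{eq:qdeltaid}.

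For \eqref{eq:vfromq}, I would fix $k \in [m]$ and $i \in S_k$, and define $b_s := \sum_{\delta \in \pm 1^{S_k}_{0},\, \delta_i = s} q^{(k)}_{\delta}$ for $s \in \{\pm 1\}$. By \autoref{lem:qkcommute} the elements $\{q^{(k)}_\delta : \delta \in \pm 1^{S_k}_0\}$ are exactly the distinct entries appearing in any single row of the magic unitary $q^{(k)}$, and hence are pairwise orthogonal projections; together with \eqref{eq:qdeltaid} this gives $b_1 + b_{-1} = 1$. Thus $\{b_1, b_{-1}\}$ and $\{v^{(i)}_1, v^{(i)}_{-1}\}$ are two decompositions of the identity into orthogonal projections. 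Applying the first case of \autoref{lem:qvprod} yields $v^{(i)}_s q^{(k)}_\delta = q^{(k)}_\delta$ for every $\delta$ with $\delta_i = s$, whence $v^{(i)}_s b_s = b_s$. The equivalence of conditions (2) and (4) in \autoref{lem:resofid} then gives $v^{(i)}_s = b_s$, as desired. The commutativity claim is then immediate: for $i, j \in S_k$, both $v^{(i)}_s$ and $v^{(j)}_t$ can be written via \eqref{eq:vfromq} as sums of elements from the pairwise-commuting family $\{q^{(k)}_\delta\}_{\delta \in \pm 1^{S_k}_0}$ (\autoref{lem:qkcommute}), so they commute.

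For \eqref{eq:qfromv}, I would expand the product $\prod_{i \in S_k} v^{(i)}_{\delta_i}$ by substituting \eqref{eq:vfromq} into each factor. This produces a sum, indexed by families $(\delta^{(i)})_{i \in S_k}$ with $\delta^{(i)} \in \pm 1^{S_k}_0$ and $\delta^{(i)}_i = \delta_i$, of products $\prod_{i \in S_k} q^{(k)}_{\delta^{(i)}}$. Since distinct $q^{(k)}_\delta$ are orthogonal, such a product vanishes unless all the $\delta^{(i)}$ coincide, and the constraints $\delta^{(i)}_i = \delta_i$ then force the common value to be exactly $\delta$; the surviving term collapses to $(q^{(k)}_\delta)^{|S_k|} = q^{(k)}_\delta$. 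Finally, the generation claim follows from \autoref{lem:qutGdecomp}: the $C^*$-algebra $C(\Qut(G))$ is generated by the entries of the $v^{(i)}$ and the $q^{(k)}$, while \eqref{eq:qfromv} expresses each $q^{(k)}_\delta$ as a product of $v^{(i)}_s$'s, and \eqref{eq:vfromq} (applied with any $k \in T_i$, which exists because $|T_i| \geq 1$) expresses each $v^{(i)}_s$ as a sum of $q^{(k)}_\delta$'s. I do not anticipate any serious obstacle; the only delicate points are invoking the correct equivalence in \autoref{lem:resofid} and carefully tracking the orthogonality relations needed to collapse the expanded product in \eqref{eq:qfromv}.
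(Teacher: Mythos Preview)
Your proposal is correct and follows essentially the same approach as the paper: both derive \eqref{eq:vfromq} via \autoref{lem:resofid} (you invoke the equivalence $(2)\Leftrightarrow(4)$ where the paper uses $(3)\Leftrightarrow(4)$), deduce commutativity from \autoref{lem:qkcommute}, and obtain the generation statement identically. The only noteworthy variation is in \eqref{eq:qfromv}: you expand the product directly and collapse it using pairwise orthogonality of the $q^{(k)}_\delta$, whereas the paper instead verifies $q^{(k)}_\delta \prod_{i} v^{(i)}_{\delta_i} = q^{(k)}_\delta$ and pairwise orthogonality of the products $\prod_i v^{(i)}_{\delta_i}$ and then appeals to \autoref{cor:resofid}; both routes are short and equivalent in spirit.
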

\begin{proof}
	Let $k \in [m]$, $i \in S_k$, and $s \in \{\pm 1\}$. If $\delta' \in \pm 1^{S_k}_0$ is such that $\delta'_i \ne s$, then for any $\alpha,\beta \in \pm 1^{S_k}_{b_k}$ such that $\alpha \symmdiff \beta = \delta'$, we have that $\alpha_i\beta_i \ne s$ and thus by \autoref{lem:qvprod} we have that
	\[v^{(i)}_s q^{(k)}_{\delta'} = v^{(i)}_s q^{(k)}_{\alpha,\beta} = 0.\]
	Therefore,
	\[v^{(i)}_s \sum_{\delta \in \pm 1^{S_k}_0 : \delta_i \ne s} q^{(k)}_\delta = 0.\]
	Applying \autoref{lem:resofid} to
	\[a_1 = v^{(i)}_1, \ a_2 = v^{(i)}_{-1}, \quad b_1 = \sum_{\delta \in \pm 1^{S_k}_0 : \delta_i = 1} q^{(k)}_\delta, \quad b_2 = \sum_{\delta \in \pm 1^{S_k}_0 : \delta_i = -1} q^{(k)}_\delta,\]
	we obtain Equation~\eqref{eq:vfromq}.
	We remark that this shows that the summation on the righthand side of Equation~\eqref{eq:vfromq} does not depend on the particular $k \in [m]$ such that $i \in S_k$.
	
	Now suppose that $i,j \in S_k$ and $s,t \in \{\pm 1\}$. By the above we have that both $v^{(i)}_s$ and $v^{(j)}_t$ are the sums of terms contained in the set $\{q^{(k)}_\delta : \delta \in \pm 1^{S_k}_0\}$. Since all terms in this set pairwise commute, so do $v^{(i)}_s$ and $v^{(j)}_t$. This means that the product on the righthand side of Equation~\eqref{eq:qfromv} is well-defined.
	
	Let $\delta \in \pm 1^{S_k}_0$ and pick $\alpha, \beta \in \pm 1^{S_k}_{b_k}$ so that $\delta = \alpha \symmdiff \beta$. From \autoref{lem:qvprod}, we have that
	\[q^{(k)}_\delta v^{(i)}_{\delta_i} = q^{(k)}_{\alpha, \beta} v^{(i)}_{\delta_i} = q^{(k)}_{\alpha, \beta} = q^{(k)}_{\delta},\]
	for all $i \in S_k$. From this it follows that
	\[q^{(k)}_\delta \prod_{i \in S_k}v^{(i)}_{\delta_i} = q^{(k)}_\delta.\]
	Now suppose that $\delta, \delta' \in \pm 1^{S_k}_0$ are not equal. Then there exists $j \in S_k$ such that $\delta_j \ne \delta'_j$. It follows that
	\[\left(\prod_{i \in S_k}v^{(i)}_{\delta_i}\right) \left(\prod_{i \in S_k}v^{(i)}_{\delta'_i}\right) = \left(\prod_{i \in S_k \setminus \{j\}}v^{(i)}_{\delta_i}\right) v^{(j)}_{\delta_j}v^{(j)}_{\delta'_j}\left(\prod_{i \in S_k \setminus \{j\}}v^{(i)}_{\delta'_i}\right) = 0.\]
	The above two equations allow us to apply \autoref{cor:resofid} and conclude that Equation~\eqref{eq:qfromv} holds.
	
	Lastly, it is clear that union of the two sets of generators from the lemma statement generate $C(\Qut(G))$, and by Equations~\eqref{eq:vfromq} and~\eqref{eq:qfromv} and the fact that $|T_i| \geq 1$ for all $i$, we see that each set individually generates the other. Therefore each set generates the full algebra $C(\Qut(G))$. Note that if $|T_i| = 0$ then the element $v^{(i)}_s$ could not be generated by the elements $\{q^{(k)}_\delta : k \in [m], \ \delta \in \pm 1^{S_k}_0\}$.
\end{proof}

In order to show that $\Qut(G(M,b))$ is isomorphic to the dual of the solution group $\Gamma(M,0)$, we will show that $\Qut(G(M,b))$ is isomorphic to $\Qut(\widehat{G}(M,b))$ in the theorem below and then apply~\autoref{thm:qaut2cgamma}.

\begin{theorem}
	\label{thm:qautG2qautGhat}
	Let $M \in \mathbb{Z}_2^{m \times n}$ and $b \in \mathbb{Z}_2^m$ such that $|T_i|\geq 1$ for all $i\in [n]$. Set $G = G(M,b)$ and $\widehat{G} = \widehat{G}(M,b)$. Let $\Delta_G$ and $\Delta_{\widehat{G}}$ denote the comultiplications of $\Qut(G)$ and $\Qut(\widehat{G})$ respectively. Then there exists an isomorphism $\varphi : C(\Qut(G)) \to C(\Qut(\widehat{G}))$ such that $\Delta_{\widehat{G}} \circ \varphi = (\varphi \otimes \varphi) \circ \Delta_G$. In other words, $\Qut(G)$ and $\Qut(\widehat{G})$ are isomorphic as compact quantum groups.
\end{theorem}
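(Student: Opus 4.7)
My plan is to construct mutually inverse $*$-homomorphisms $\psi \colon C(\Qut(\widehat{G})) \to C(\Qut(G))$ and $\varphi \colon C(\Qut(G)) \to C(\Qut(\widehat{G}))$ via universal properties, and then verify that they intertwine the comultiplications on generators. The key structural fact, following from \autoref{lem:2gens}, is that $C(\Qut(G))$ is already generated by its equation-block entries $\{q^{(k)}_{\alpha,\beta}\}$ alone (the $v^{(i)}_s$ being recoverable via Equation \eqref{eq:vfromq}), so the two algebras have morally the same set of generators.

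For the map $\psi \colon \hat{q}^{(k)}_{\alpha,\beta} \mapsto q^{(k)}_{\alpha,\beta}$, the magic unitary relations transfer immediately via \autoref{lem:qutGdecomp}, and the crucial task is the edge-color preservation of $\widehat{G}$: $\hat{q}^{(l)}_{\alpha,\alpha'}\hat{q}^{(k)}_{\beta,\beta'} = 0$ whenever some $i \in S_l \cap S_k$ satisfies $\alpha_i\alpha'_i \ne \beta_i\beta'_i$. For such $i$, I write
\[
	q^{(l)}_{\alpha,\alpha'} q^{(k)}_{\beta,\beta'} = q^{(l)}_{\alpha,\alpha'}\bigl(v^{(i)}_1 + v^{(i)}_{-1}\bigr)q^{(k)}_{\beta,\beta'},
\]
and apply \autoref{lem:qvprod} to each summand: for every $s \in \{\pm 1\}$, either $s \ne \alpha_i\alpha'_i$ kills the left factor, or $s \ne \beta_i\beta'_i$ kills the right factor. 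This is the main technical content of the proof.

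For the map $\varphi$, I first observe that the argument of \autoref{lem:qkcommute} adapts verbatim to $\widehat{G}$ using only the edge-color relations (since $\alpha \symmdiff \alpha' \ne \beta \symmdiff \beta'$ on $S_k$ is equivalent to $\alpha \symmdiff \beta \ne \alpha' \symmdiff \beta'$), yielding elements $\hat{q}^{(k)}_\delta$ with $\hat{q}^{(k)}_{\alpha,\beta} = \hat{q}^{(k)}_{\alpha \symmdiff \beta}$ and $\sum_\delta \hat{q}^{(k)}_\delta = 1$. Fix some $k_i \in T_i$ for each $i \in [n]$ and set $\hat{v}^{(i)}_s := \sum_{\delta \in \pm 1^{S_{k_i}}_0,\, \delta_i = s} \hat{q}^{(k_i)}_\delta$. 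Declaring $\varphi(q^{(k)}_{\alpha,\beta}) := \hat{q}^{(k)}_{\alpha,\beta}$ and $\varphi(v^{(i)}_s) := \hat{v}^{(i)}_s$, the only nontrivial defining relation of $C(\Qut(G))$ to verify is the $G$-edge condition between $Q_k$ and $V_i$. Rewriting $\hat{v}^{(i)}_r = \sum_{\gamma' : (\gamma_0)_i\gamma'_i = r} \hat{q}^{(k_i)}_{\gamma_0,\gamma'}$ for an arbitrary $\gamma_0 \in \pm 1^{S_{k_i}}_{b_{k_i}}$ and applying the edge-color relation of $\widehat{G}$ to the pair $(Q_k, Q_{k_i})$ (both contain the index $i$, so they have at least one edge-color coordinate), I verify that $\hat{q}^{(k)}_{\alpha,\beta}\hat{v}^{(i)}_r$ equals $\hat{q}^{(k)}_{\alpha,\beta}$ when $\alpha_i\beta_i = r$ and vanishes otherwise; in particular the definition of $\hat{v}^{(i)}_s$ effectively does not depend on the choice of $k_i$.

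Finally, both compositions are the identity on the generators $\{q^{(k)}_{\alpha,\beta}\}$ and $\{\hat{q}^{(k)}_{\alpha,\beta}\}$ by construction, and $\psi \circ \varphi$ fixes each $v^{(i)}_s$ via Equation~\eqref{eq:vfromq}, so $\psi$ and $\varphi$ are mutually inverse $*$-isomorphisms. The coproduct intertwining $\Delta_{\widehat{G}} \circ \varphi = (\varphi \otimes \varphi) \circ \Delta_G$ holds on each $q^{(k)}_{\alpha,\beta}$ because both $\Delta_G$ and $\Delta_{\widehat{G}}$ act via the standard formula $\Delta(u_{ab}) = \sum_c u_{ac} \otimes u_{cb}$, and in both cases the vertex colors force the middle summation to range over $Q_k$, yielding identical expressions after applying $\varphi$; by generation this extends to the full algebra. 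The main obstacle is the edge-color verification in the step involving $\psi$, which is precisely the mechanism by which the vertex-colored graph $G$ encodes all the information carried by $\widehat{G}$'s edge coloring.
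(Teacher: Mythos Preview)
Your proof is correct and follows essentially the same four-step structure as the paper: construct $\psi$ and $\varphi$ via universal properties, verify they are mutual inverses on generators, and check the comultiplication intertwining on the $q^{(k)}_{\alpha,\beta}$ using the block-diagonal form. The key technical step---inserting $v^{(i)}_1 + v^{(i)}_{-1}$ and invoking \autoref{lem:qvprod} to obtain the edge-color orthogonalities needed for $\psi$---is identical to the paper's argument. The only notable difference is that the paper cites \cite{RSsolution} for the facts that $\hat{q}^{(k)}_{\alpha,\beta}$ depends only on $\alpha \symmdiff \beta$ and that $\hat{v}^{(i,k)}_s$ is independent of $k$, whereas you re-derive the former by adapting \autoref{lem:qkcommute} and obtain the latter as a byproduct of your direct verification of the $G$-edge relation $\hat{q}^{(k)}_{\alpha,\beta}\hat{v}^{(i)}_r = 0$ for $\alpha_i\beta_i \ne r$ (valid for \emph{every} $k$ with $i \in S_k$, not just $k_i$); this makes your argument slightly more self-contained but is not a genuinely different route.
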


\begin{proof}
	The proof is structured as follows: we use the universal properties of $C(\Qut(G))$ and $C(\Qut(\widehat{G}))$ respectively to first obtain a surjective $*$-homomorphism $\varphi_1: C(\Qut(\widehat{G})) \to C(\Qut(G))$, and then a surjective $*$-homomorphism $\varphi_2: C(\Qut(G)) \to C(\Qut(\widehat{G}))$. We will then show that $\varphi_1$ and $\varphi_2$ are inverses of each other thus proving that they are in fact isomorphisms. Lastly, we will prove that $\varphi_2$ intertwines the comultiplications as described in the theorem statement.\\
	
	\noindent\emph{Step 1: Construction of a $*$-homomorphism $\varphi_1: C(\Qut(\widehat{G})) \to C(\Qut(G))$.}\\
	We let $u$ and $\hat{u}$ denote the fundamental representations of $\Qut(G)$ and $\Qut(\widehat{G})$ respectively. We also use $q^{(k)}_{\alpha,\beta}$ and $\hat{q}^{(k)}_{\alpha,\beta}$ to denote $u_{(k,\alpha),(k,\beta)}$ and $\hat{u}_{(k,\alpha),(k,\beta)}$ respectively. We will show that the elements $q^{(k)}_{\alpha,\beta}$ satisfy the same relations as the $\hat{q}^{(k)}_{\alpha,\beta}$, thus proving the existence of the $*$-homomorphism $\varphi_1$ that maps $\hat{q}^{(k)}_{\alpha,\beta}$ to $q^{(k)}_{\alpha,\beta}$.
	
	We must show that $q^{(k)}_{\alpha,\beta}q^{(l)}_{\alpha',\beta'} = 0$ whenever the edge in $\widehat{G}$ between $(k,\alpha)$ and $(l,\alpha')$ is a different color than the edge between $(k,\beta)$ and $(l,\beta')$, or when one is a non-edge and the other is an edge. If $k \ne l$, then the latter case never occurs by definition of the graph $\widehat{G}$. If $k = l$, then it is possible that one pair is a non-edge and one pair is an edge, but in this case the non-edge pair must be equal (and the edge pair will not be equal), and thus the required orthogonality is immediate. Thus we may assume that they are both edges and thus $S_l \cap S_k \ne \varnothing$. Since the edges have different colors, we have that $\alpha \symmdiff \alpha' \ne \beta \symmdiff \beta'$, and thus there exists $i \in S_l \cap S_k$ such that $\alpha_i\alpha'_i \ne \beta_i\beta'_i$ which is equivalent to $\alpha_i\beta_i \ne \alpha'_i \beta'_i$. Therefore,
	\[q^{(k)}_{\alpha,\beta}q^{(l)}_{\alpha',\beta'} = q^{(k)}_{\alpha,\beta} \left(v^{(i)}_1 + v^{(i)}_{-1}\right) q^{(l)}_{\alpha',\beta'} = q^{(k)}_{\alpha,\beta} v^{(i)}_1 q^{(l)}_{\alpha',\beta'} + q^{(k)}_{\alpha,\beta} v^{(i)}_{-1} q^{(l)}_{\alpha',\beta'} = 0 + 0\]
	by \autoref{lem:qvprod}. By the universality property of $C(\Qut(\widehat{G}))$, there is a $*$-homomorphism $\varphi_1: C(\Qut(\widehat{G})) \to C(\Qut(G))$ such that $\varphi_1(\hat{q}^{(k)}_{\alpha,\beta}) = q^{(k)}_{\alpha,\beta}$ for all $k \in [m]$ and $\alpha,\beta \in \pm 1^{S_k}_{b_k}$. Since the elements $q^{(k)}_{\alpha,\beta}$ for $k \in [m]$ and $\alpha,\beta \in \pm 1^{S_k}_{b_k}$ generate $C(\Qut(G))$ by \autoref{lem:2gens}, the $*$-homomorphism $\varphi_1$ is surjective.\\
	
	\noindent\emph{Step 2: Construction of a $*$-homomorphism $\varphi_2: C(\Qut(G)) \to C(\Qut(\widehat{G}))$.}\\
	We construct elements that generate $C(\Qut(\widehat{G}))$ and satisfy the same relations as the entries of the fundamental representation of $\Qut(G)$ and thus by the universality of $C(\Qut(G))$ there is a surjective $*$-homomorphism $\varphi_2: C(\Qut(G)) \to C(\Qut(\widehat{G}))$. We will have that $\varphi_2(q^{(k)}_{\alpha,\beta}) = \hat{q}^{(k)}_{\alpha,\beta}$, and we will define elements $\hat{v}^{(i)}_{s,t}$ of $C(\Qut(\widehat{G}))$ such that $\varphi_2(v^{(i)}_{s,t}) = \hat{v}^{(i)}_{s,t}$.
	
	Analogously to this work, in~\cite{RSsolution} it was shown that $\hat{q}^{(k)}_{\alpha,\beta}$ (denoted $u^{(k)}_{\alpha,\beta}$ in~\cite{RSsolution}) depends only on $\alpha \symmdiff \beta$, and therefore we can define $\hat{q}^{(k)}_\delta$ for all $\delta \in \pm 1^{S_k}_0$ analogously to $q^{(k)}_\delta$. Now for $k \in [m]$, $i \in S_k$, and $s \in \{\pm 1\}$, define 
	\[\hat{v}^{(i,k)}_s = \sum_{\delta \in \pm 1^{S_k}_0: \delta_i = s} \hat{q}^{(k)}_\delta.\]
	It is easy to see that $\hat{v}^{(i,k)}_s = (\hat{v}^{(i,k)}_s)^* = (\hat{v}^{(i,k)}_s)^2$ and that $\hat{v}^{(i,k)}_1 + \hat{v}^{(i,k)}_{-1} = 1$. In~\cite{RSsolution}, it was shown that the element $y_i^{(k)} := \hat{v}^{(i,k)}_1 - \hat{v}^{(i,k)}_{-1}$ does not depend on the $k \in [m]$ such that $i \in S_k$. Since $\hat{v}^{(i,k)}_s = \frac{1}{2}(1 + sy^{(k)}_i)$, it follows that $\hat{v}^{(i,k)}_s$ also does not depend on $k$ and so we can simply write $\hat{v}^{(i)}_s$. Finally, we define $\hat{v}^{(i)}_{s,t} = \hat{v}^{(i)}_{st}$ for $s,t \in \{\pm 1\}$, and let $\hat{v}^{(i)}$ denote the magic unitary $(\hat{v}^{(i)}_{s,t})_{s,t \in \{\pm 1\}}$.
	
	It is easy to see that
	\[\hat{\hat{u}} := \left(\bigoplus_{i \in [n]} \hat{v}^{(i)}\right) \oplus \left(\bigoplus_{k \in [m]} \hat{q}^{(k)}\right)\]
	is a magic unitary indexed by the vertices of $G$. Therefore, it is only left to show that its entries satisfy the same relations as the entries of the fundamental representation of $\Qut(G)$. The relations corresponding to the vertex colors are satisfied simply due to the block form of $\hat{\hat{u}}$. Therefore, it suffices to show that $\hat{\hat{u}}_{a,b}\hat{\hat{u}}_{a',b'} = 0$ whenever $a \sim a'$ and $b \nsim b'$. Since $a \sim a'$, we may assume without loss of generality that $a = (i,s)$ for some $i \in [n]$ and $s \in \{\pm 1\}$, and $a' = (k,\alpha)$ for some $k \in [m]$ such that $i \in S_k$ and $\alpha \in \pm 1^{S_k}_{b_k}$. If $\hat{\hat{u}}_{a,b} = 0$, then we are done, so we may assume that $b = (i,t)$ for some $t \in \{\pm 1\}$. Similarly, we may assume that $b' = (k,\beta)$ for some $\beta \in \pm 1^{S_k}_{b_k}$. Thus we must show that $\hat{v}^{(i)}_{st}\hat{q}^{(k)}_{\alpha \symmdiff \beta} = 0$. Since $a \sim a'$, we have that $\alpha_i = s$, and since $b \nsim b'$ we have that $\beta_i \ne t$. Letting $\delta = \alpha \symmdiff \beta$, we see that $\delta_i \ne st$. Using the definition of $\hat{v}^{(i)}_{st}$, we have that
	\begin{equation}\label{eq:vq2qq}
	\hat{v}^{(i)}_{st}\hat{q}^{(k)}_{\delta} = \left(\sum_{\delta' \in \pm 1^{S_k}_0: \delta'_i = st} \hat{q}^{(k)}_{\delta'}\right)\hat{q}^{(k)}_{\delta}.
	\end{equation}
	Since $\hat{q}^{(k)}_{\delta'}$ for $\delta' \in \pm 1^{S_k}_0$ are precisely the entries in any row of the magic unitary $\hat{q}^{(k)}$, we have that $\hat{q}^{(k)}_{\delta'}\hat{q}^{(k)}_{\delta} = 0$ unless $\delta'=\delta$. However, since $\delta_i \ne st$ the expression in~\eqref{eq:vq2qq} is equal to zero as desired.
	
	By the above and the universality of $C(\Qut(G))$, there exists a surjective $*$-homomorphism $\varphi_2:C(\Qut(G)) \to C(\Qut(\widehat{G}))$ such that $\varphi_2(q^{(k)}_{\alpha,\beta}) = \hat{q}^{(k)}_{\alpha,\beta}$ for all $k \in [m]$ and $\alpha,\beta \in \pm 1^{S_k}_{b_k}$, and $\varphi_2(v^{(i)}_s) = \hat{v}^{(i)}_s$ for all $i \in [n]$ and $s \in \{\pm 1\}$.\\
	
	\noindent\emph{Step 3: Showing that $\varphi_1$ and $\varphi_2$ are inverses of each other.}\\
	Since $\varphi_1$ and $\varphi_2$ are $*$-homomorphisms, it suffices to show that $\varphi_1 \circ \varphi_2 : C(\Qut(G)) \to C(\Qut(G))$ acts as the identity on a generating set of $C(\Qut(G))$. Thus by \autoref{lem:2gens} it suffices to show that $\varphi_1(\varphi_2(q^{(k)}_{\alpha,\beta})) = q^{(k)}_{\alpha,\beta}.$ However, this is immediate from the above since we have shown that $\varphi_2(q^{(k)}_{\alpha,\beta}) = \hat{q}^{(k)}_{\alpha,\beta}$ in Step 2, and that $\varphi_1(\hat{q}^{(k)}_{\alpha,\beta}) = q^{(k)}_{\alpha,\beta}$ in Step 1. Similarly, $\varphi_2(\varphi_1(\hat{q}^{(k)}_{\alpha,\beta})) = \hat{q}^{(k)}_{\alpha,\beta}$ and therefore $\varphi_2 \circ \varphi_1$ is the identity on $C(\Qut(\widehat{G}))$. Thus we have shown that $\varphi_1$ and $\varphi_2$ are inverses of each other and therefore they are both $*$-isomorphisms.\\
	
	\noindent\emph{Step 4: Showing that $\Delta_G \circ \varphi_1 = (\varphi_1 \otimes \varphi_1) \circ \Delta_{\widehat{G}}$.}\\
	Again, it suffices to prove the equality on a generating set of $C(\Qut(\widehat{G}))$. In this case we will show that these two functions are equal on $\hat{q}^{(k)}_{\alpha,\beta}$ for all $k \in [m]$ and $\alpha, \beta \in \pm 1^{S_k}_{b_k}$. Due to the block diagonal structure of the fundamental representation $\hat{u}$ of $\Qut(\widehat{G})$, it is easy to see that
 \[\Delta_{\widehat{G}}(\hat{q}^{(k)}_{\alpha,\beta}) = \sum_{\gamma \in \pm 1^{S_k}_{b_k}} \hat{q}^{(k)}_{\alpha,\gamma} \otimes \hat{q}^{(k)}_{\gamma,\beta}.\]
 Therefore,
 \[(\varphi_1 \otimes \varphi_1) \circ \Delta_{\widehat{G}}(\hat{q}^{(k)}_{\alpha,\beta}) = \sum_{\gamma \in \pm 1^{S_k}_{b_k}} q^{(k)}_{\alpha,\gamma} \otimes q^{(k)}_{\gamma,\beta}.\]
On the other hand, $\varphi(\hat{q}^{(k)}_{\alpha,\beta}) = q^{(k)}_{\alpha,\beta}$, and by the block structure of the fundamental representation $u$ of $\Qut(G)$, we have that 
\[\Delta_G(q^{(k)}_{\alpha,\beta}) = \sum_{\gamma \in \pm 1^{S_k}_{b_k}} q^{(k)}_{\alpha,\gamma} \otimes q^{(k)}_{\gamma,\beta}.\]
Therefore, $\Delta_G \circ \varphi_1(\hat{q}^{(k)}_{\alpha,\beta}) = (\varphi_1 \otimes \varphi_1) \circ \Delta_{\widehat{G}}(\hat{q}^{(k)}_{\alpha,\beta})$ for all $k \in [m]$ and $\alpha,\beta \in \pm 1^{S_k}_{b_k}$, and thus $\Delta_G \circ \varphi_1 = (\varphi_1 \otimes \varphi_1) \circ \Delta_{\widehat{G}}$ as desired.
\end{proof}

As a corollary of~\autoref{thm:qaut2cgamma} and~\autoref{thm:qautG2qautGhat}, we obtain the main result of this section. We denote by $G \sqcup G'$ the disjoint union of graphs $G$ and $G'$.
Note that we obtain a vertex-colored graph whose quantum automorphism group is the dual of a homogeneous solution group even if the solution group has variables that do not appear in any equation.

\begin{corollary}\label{cor:vertexcolored}
	Let $M \in \mathbb{Z}_2^{m \times n}$ and $b \in \mathbb{Z}_2^m$. Then $\Qut(G(M,b))$ is isomorphic to the dual of the solution group $\Gamma(M,0)$ as a compact quantum group.
\end{corollary}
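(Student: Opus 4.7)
The plan is to split into two cases according to whether $M$ has empty columns, reducing the non-degenerate case to a direct chain of the two main theorems already proved.

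\textbf{Case 1: no empty columns.} Suppose $|T_i|\geq 1$ for every $i\in[n]$. Then the hypotheses of both \autoref{thm:qautG2qautGhat} and \autoref{thm:qaut2cgamma} are satisfied. The former gives an isomorphism of compact quantum groups $\Qut(G(M,b))\cong \Qut(\widehat{G}(M,b))$, and the latter gives $\Qut(\widehat{G}(M,b))\cong \hat{\Gamma}(M,0)$. Composing the two $*$-isomorphisms and checking that the comultiplications are intertwined (which is immediate since both intermediate isomorphisms intertwine them) settles this case.

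\textbf{Case 2: general $M$.} After reordering columns, which changes neither $\Gamma(M,0)$ nor $\Qut(G(M,b))$ up to relabelling, we may write $M=(M'\ 0_{m,s})$ with $M'\in\F_2^{m\times(n-s)}$ having no empty columns. Inspecting \autoref{defgraph} shows that, as vertex-colored graphs,
\[ G(M,b)\;=\;G(M',b)\ \sqcup\ H_1\ \sqcup\ \cdots\ \sqcup\ H_s, \]
where each $H_j$ consists of two vertices $(n-s+j,\pm 1)$ sharing the color $(n-s+j,0)$ and having no incident edges. The color classes of the different summands are mutually disjoint, so relation \eqref{rel3} in \autoref{def:QAG} forces $u_{xy}=0$ whenever $x$ and $y$ lie in distinct summands. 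Consequently the fundamental representation of $\Qut(G(M,b))$ is block-diagonal with one block per summand, and no relation of \autoref{def:QAG} couples different blocks; a standard universal-property argument (identical in spirit to Steps~1 and~2 in the proof of \autoref{thm:qautG2qautGhat}) yields
\[ \Qut(G(M,b))\;\cong\;\Qut(G(M',b))\,\ast\,\Qut(H_1)\,\ast\,\cdots\,\ast\,\Qut(H_s) \]
as compact quantum groups (free product in the sense of \autoref{def:freeprod}).

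Each $\Qut(H_j)$ is the quantum automorphism group of two color-equivalent isolated vertices, i.e.\ $S_2^+=S_2\cong\widehat{\Z_2}$, since its $C^*$-algebra is the universal one generated by a single projection. Applying Case~1 to $M'$ and combining with the remark following \autoref{def:solgroup}, which identifies $\hat{\Gamma}(M,0)=\hat{\Gamma}(M',0)\ast\widehat{\Z_2^{\ast s}}$, we obtain the desired isomorphism
\[ \Qut(G(M,b))\;\cong\;\hat{\Gamma}(M',0)\,\ast\,\widehat{\Z_2}\,\ast\,\cdots\,\ast\,\widehat{\Z_2}\;\cong\;\hat{\Gamma}(M,0). \]

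The main obstacle is the free-product step: one must justify carefully that color-separation of the connected components really does produce the $C^*$-algebraic free product of the quantum automorphism groups, with matching comultiplication. This is routine given the universal property of $C(\Qut(\cdot))$, the block-diagonal structure forced by vertex colors, and \autoref{def:freeprod}, but it is the only place where the proof does genuine work beyond applying the already-established theorems.
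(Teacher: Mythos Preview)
Your proof is correct and follows essentially the same route as the paper: reduce to the case of no empty columns via a disjoint-union decomposition, apply \autoref{thm:qautG2qautGhat} and \autoref{thm:qaut2cgamma} to the reduced system, identify each two-point summand with $\widehat{\Z_2}$, and reassemble using the remark that $\hat{\Gamma}(M,0)\cong\hat{\Gamma}(M',0)\ast\widehat{\Z_2^{\ast s}}$.

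The only notable difference is in how the free-product step is justified. The paper invokes \cite[Lemma 6.4]{DKRSZ}, which gives $\Qut$ of a disjoint union as the free product of the factors provided the summands are pairwise non-quantum-isomorphic as vertex-colored graphs; you instead exploit the fact that the summands use pairwise disjoint color sets, so relation~\eqref{rel3} already forces block-diagonality and no relation of \autoref{def:QAG} links different blocks. Your argument is more elementary and tailored to the situation at hand, since it bypasses the non-quantum-isomorphism check (which is trivial here anyway, as distinct color palettes rule out any nonzero intertwining magic unitary). Either justification is fine; yours makes the structure slightly more transparent.
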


\begin{proof}
	By reordering, we may assume that there is an $n_1 \leq n$ such that $|T_i|\geq 1$ for all $i\leq n_1$ and $|T_i|=0$ for all $k$ for all $i > n_1$. Let $s=n-n_1$. It holds
	\begin{align*}
		M=\begin{pmatrix} M' & 0_{m,s}\end{pmatrix},
	\end{align*}
	where $M'\in \mathbb{Z}_2^{m \times n_1}$ consists of the first $n_1$ columns of $M$ and $0_{m,s}\in \mathbb{Z}_2^{m \times s}$ is the zero matrix. By \autoref{def:graph}, we obtain
	\begin{align*}
		G(M,b)\cong G(M',b) \sqcup \left(\bigsqcup_{i > n_1} \overline{K_2}^{(i,0)}\right),
	\end{align*}
	where $\overline{K_2}^{(i,0)}$ denotes the two isolated vertices $(i,1)$ and $(i,-1)$ with color $(i,0)$. Therefore, we get 
	\begin{align}
		\Qut(G(M,b))&\cong \Qut\left(G(M',b) \sqcup \left(\bigsqcup_{i > n_1} \overline{K_2}^{(i,0)}\right)\right)\nonumber\\
		&\cong \Qut(G(M',b)) \ast (\bigast_{i > n_1} \Qut(\overline{K_2}^{(i,0)}))\label{eq:freeprod},
	\end{align}
	by \cite[Lemma 5.4]{DKRSZ}, since the graphs $G(M',b)$ and $\overline{K_2}^{(i,0)}$ are not quantum isomorphic as vertex-colored graphs. By \autoref{thm:qautG2qautGhat}, we obtain
	\begin{align*}
		\Qut(G(M',b)) \ast& \left(\bigast_{i > n_1} \Qut(\overline{K_2}^{(i,0)})\right)\\&\cong \Qut(\hat{G}(M',b)) \ast \left(\bigast_{i > n_1} \Qut(\overline{K_2}^{(i,0)})\right),
	\end{align*}
	since for all $i\in [n_1]$, we have $M_{ki}'=1$ for some $k$ by construction of $M'$. It holds $\Qut(\overline{K_2}^{(i,0)})\cong \hat{\Z}_2$ and therefore $\bigast_{i > n_1} \Qut(\overline{K_2}^{(i,0)})\cong \widehat{\Z_2^{\ast s}}$. Furthermore, we have $ \Qut(\hat{G}(M',b)) \cong \hat{\Gamma}(M',0)$ by \autoref{thm:qaut2cgamma}. Summarizing, we obtain
	{\reqnomode 
	\begin{align*}
		\Qut(G(M,b))\cong \hat{\Gamma}(M',0)\ast \widehat{\Z_2^{\ast s}}\cong \hat{\Gamma}(M,0). \tag*{\qedhere}
	\end{align*}
	} 
\end{proof}

\section{Decoloring arbitrary vertex-colored graphs while preserving quantum isomorphism and quantum automorphism groups}
\label{sec:decolor}

\def\X{G}

In this section, we give a decoloring procedure
\[ \decolor : \{\text{vertex-colored graphs}\} \to \{\text{uncolored, connected graphs}\} \]
that preserves quantum automorphism groups as well as quantum isomorphism between different graphs.
This generalizes the decoloring procedure from \cite[Corollary 4.14]{RSsolution}, which only applies to graphs with minimum degree $\geq 3$, and even then does not always preserve the quantum automorphism group.%
	\footnote{The decoloring procedure of \cite{RSsolution} adds new constraints on the generators of the fundamental representation, so the quantum automorphism group is only preserved if these constraints were already met in the original graph.}
Moreover, the decoloring procedure from \cite{RSsolution} was not shown to preserve quantum isomorphism in general.
On the other hand, the decoloring procedure of \cite{RSsolution} is more general in another respect, because it applies to vertex-- and edge-colored graphs (instead of just vertex-colored graphs).

A precise statement of the main result of this section can be found in \autoref{thm:decolor} and \autoref{cor:decolor} below.

\subsection{The quantum isomorphism \texorpdfstring{$C^*$\nobreakdash-algebra}{C*-algebra} of two graphs}
To formulate our result, we expand our view from quantum automorphism groups to quantum isomorphism $C^*$\nobreakdash-algebras.

\begin{definition}[{\cite[Remark 2.4]{Brannan-bigalois}, \cite[Definition 6.3]{RSsolution}}]
	\label{def:QIso}
	Let $\X$ and $\X'$ be vertex-colored graphs whose colorings $c : V(\X) \to C$, $c' : V(\X') \to C$ take values in a common set of colors $C$.
	The \emph{quantum isomorphism $C^*$\nobreakdash-algebra of $\X$ and $\X'$}, denoted $\QIso(\X,\X')$, is the universal $C^*$\nobreakdash-algebra generated by $u = (u_{ij})_{i \in V(\X), j \in V(\X')}$ satisfying:
	\begin{align*}
		&\bullet \ u_{ij}^2 = u_{ij}^* = u_{ij} &&\quad \text{for all $i \in V(\X)$, $j \in V(\X')$}; \\[.5ex]
		&\bullet \ \sum_{j \in V(\X')} u_{ij} = 1 &&\quad \text{for all $i \in V(\X)$}; \\
		&\bullet \ \sum_{i \in V(\X)} u_{ij} = 1 &&\quad \text{for all $j \in V(\X')$}; \\[.5ex]
		&\bullet \ A_\X u = u A_{\X'} \, ; \\[.5ex]
		&\bullet \ u_{ij} = 0 &&\quad \text{whenever $c(i) \neq c'(j)$}.
	\end{align*}
	We refer to $u$ as the \emph{generating matrix of $\QIso(\X,\X')$}.
	
	For vertex-colored graphs $\X,\X',\tilde \X,\tilde \X'$, we write $\QIso(\X,\X') = \QIso(\tilde \X,\tilde \X')$ if and only if $V(\X) = V(\tilde \X)$ and $V(\X') = V(\tilde \X')$ and $u = \tilde u$, where $u$ and $\tilde u$ are the generating matrices of $\QIso(\X,\X')$ and $\QIso(\tilde \X,\tilde \X')$, respectively.%
		\footnote{More formally, by $u = \tilde u$ we mean that there are $C^*$\nobreakdash-algebra isomorphisms $\varphi : \QIso(\X,\X') \to \QIso(\tilde \X,\tilde \X')$, $\psi : \QIso(\tilde \X,\tilde \X') \to \QIso(\X,\X')$ such that $\varphi(u_{ij}) = \tilde u_{ij}$ and $\psi(\tilde u_{ij}) = u_{ij}$ for all $i \in V(\X) = V(\tilde \X)$, $j \in V(\X') = V(\tilde \X')$.}
	General $C^*$\nobreakdash-algebra isomorphisms (that do not necessarily preserve the generating matrix) are denoted $\QIso(\X,\X') \cong \QIso(\tilde \X,\tilde \X')$.
\end{definition}

\begin{remark}
	\label{rmk:iso-qiso-algebra}
	The relations on $u$ are exactly the conditions of quantum isomorphism (see \autoref{def:qiso}), so it follows from universality of the $C^*$\nobreakdash-algebra $\QIso(\X,\X')$ that $\X \cong_q \X'$ if and only if $\QIso(\X,\X') \neq 0$.
	By a similar argument, $\X$ and $\X'$ are classically isomorphic if and only if $\QIso(\X,\X')$ has a non-trivial one-dimensional $*$\nobreakdash-representation.
\end{remark}

\begin{remark}
	\label{rmk:qiso-vertex-counts}
	If $\QIso(\X,\X') \neq 0$, then summing over the row and column sums of the generating matrix $u$ reveals that $|V(\X)| = |V(\X')|$, so quantum isomorphic graphs must have the same number of vertices.
	Likewise, since $u_{ij} = 0$ whenever $c(i) \neq c'(j)$, the rows and columns of the submatrix of $u$ indexed by all $i \in V(\X), j \in V(\X')$ with $c(i) = c'(j) = r$ sum up to $1$, for every fixed color $r \in C$.
	So again, summing over these row and column sums reveals that quantum isomorphic graphs $\X \cong_q \X'$ must have the same number of vertices of color $r$, for every color $r \in C$.
\end{remark}

\begin{definition}	
	Given vertex-colored graphs $\X,\X',\X''$, it is routinely verified that there exists a (well-defined) $*$\nobreakdash-homomorphism
	\begin{align*}
		&\Delta_{\X,\X',\X''} : \QIso(\X,\X'') \to \QIso(\X,\X') \otimes \QIso(\X',\X'')
	\end{align*}
	such that
	\begin{align*}
		&\Delta_{\X,\X',\X''}(u_{ik}) = \sum_{j \in V(\X')} v_{ij} \otimes w_{jk} \qquad \text{for all $i \in V(\X)$, $k \in V(\X'')$},
	\end{align*}
	where $u,v,w$ denote the respective generating matrices.
	We call $\Delta_{\X,\X',\X''}$ the \emph{co-composition}.
	When $\X$ and $\X''$ are clear from context, we write $\Delta_{\X'}$ instead of $\Delta_{\X,\X',\X''}$.
	Note that the co-composition $\Delta_{\X,\X,\X}$ coincides with the comultiplication of $\Qut(\X)$.
\end{definition}

\begin{remark}
	\label{rmk:groupoid}
	The quantum isomorphism $C^*$\nobreakdash-algebras $\QIso(\X,\X')$ do not form a quantum group when $\X \neq \X'$.
	However, they do form a ``cogroupoid'' in the sense of \cite{Bichon-cogroupoid}; this was proved in \cite{Brannan-bigalois}.%
		\footnote{In \cite{Brannan-bigalois}, this was shown at the level of $*$\nobreakdash-algebras (instead of $C^*$\nobreakdash-algebras), but it is not hard to see that the same holds for the $C^*$\nobreakdash-algebras.
		This boils down to showing that the maps $\varepsilon_{\X} : \Qut(\X) \to \C$, $u_{ij} \mapsto \delta_{ij}$ and $S_{\X,\X'} : \QIso(\X,\X') \to \QIso(\X',\X)$, $u_{ij} \mapsto u_{ji}'$ satisfy the properties of the counit and coinverse.}
	
	The quantum isomorphism $C^*$\nobreakdash-algebra $\QIso(\X,\X')$ was originally defined in quantum information theory as the $C^*$\nobreakdash-algebra of the $(\X,\X')$-isomorphism game; see in particular \cite[Remark 2.4]{Brannan-bigalois}.
	Connections between quantum automorphism groups and quantum isomorphism have been studied before; see in particular \cite[Theorem 4.5]{qperms} and \cite[Theorem 5.3]{quantum-Sabidussi}.
\end{remark}

\subsection{The decoloring procedure}

We now present the decoloring procedure and state the main result of this section.

\begin{definition}
	\label{def:decolor}
	Let $\X$ be a vertex-colored graph with colors in the set $\N_1 \times \{0\} = \{(1,0),(2,0),\ldots\}$.
	The \emph{decoloring of $\X$}, denoted $\decolor(\X)$, is the uncolored graph obtained from $\X$ in the following way:
	\begin{itemize}[leftmargin=21mm,labelwidth=\widthof{Notation.}]
		\item[Notation.] Let $C_\X \subseteq \N_1 \times \{0\}$ denote the set of colors that occur at least once in $\X$.
		\item[Step 1.] For every $(c,0) \in C_\X$, add $c + 4$ vertices $x_{(c,1)},\ldots,x_{(c,c+4)}$ with respective colors $(c,1),\ldots,(c,{c+4})$.
		Furthermore, add a vertex $x_{(0,0)}$ with color $(0,0)$.
		\item[Step 2.] For every $(c,0) \in C_\X$, connect the vertices $x_{(c,1)},\ldots,x_{(c,c+4)}$ to form a path $x_{(c,1)}x_{(c,2)}\cdots x_{(c,c+4)}$.
		Furthermore, connect $x_{(c,1)}$ to all vertices of color $(c,0)$, and connect $x_{(c,2)}$ to $x_{(0,0)}$.
		\item[Step 3.] Forget the colors, and let $\decolor(\X)$ be the (uncolored) graph thus obtained.
	\end{itemize}
	The steps of this decoloring procedure are illustrated in \autoref{fig:decolor} ($\X_2$ denotes the graph after Step 1 and Step 2).
	Note that $\decolor(\X)$ is always connected, since from every vertex there is a path to $x_{(0,0)}$.
	\begin{figure}[h!t]
		\centering
		\definecolor{mycolor0}{hsb}{0, .9, .85}
		\definecolor{mycolor1}{hsb}{.1, .95, 1}
		\definecolor{mycolor2}{hsb}{.18, 1, 1}
		\definecolor{mycolor3}{hsb}{.27, .75, 1}
		\definecolor{mycolor4}{hsb}{.45, .9, .7}
		\definecolor{mycolor5}{hsb}{.5, 1, 1}
		\definecolor{mycolor6}{hsb}{.58, .7, .7}
		\definecolor{mycolor7}{hsb}{.77, 1, .9}
		\definecolor{mycolor8}{hsb}{.88, .8, 1}
		\begin{tikzpicture}[vertex/.style={circle,fill,inner sep=1.42pt},scale=.87]
			\def\numverts{16}
			\def\numcolors{9}
			\begin{scope}[xshift=-1.25mm,yshift=3.7cm,scale=1.2]
				\foreach \x/\clr in {0/0, 1/0, 2/0, 3/1, 4/2, 5/2, 6/3, 7/4, 8/4, 9/4, 10/5, 11/5, 12/6, 13/7, 14/7, 15/8} {
					\pgfmathsetmacro{\hoek}{90 - 360 * \x / \numverts}
					\def\curcolor{mycolor\clr}
					\pgfmathtruncatemacro{\lbl}{\clr + 1}
					\pgfmathsetmacro{\mylspace}{4*max(cos(\hoek), 0)}
					\pgfmathsetmacro{\myrspace}{4*max(-cos(\hoek), 0)}
					\draw[fill=\curcolor,draw=\curcolor!75!black] (\hoek:1cm) node[vertex,draw,semithick] (v\x) {} ++(\hoek:2.2mm) node[scale=.5,\curcolor!45!black] {\hspace*{\mylspace mm}$(\lbl,0)$\hspace*{\myrspace mm}};
				}
				\draw (v1) -- (v0) to[bend right=20] (v2);
				\draw (v2) -- (v10) -- (v1) -- (v2);
				\draw (v5) -- (v14) (v8) to[bend left=15] (v5);
				\draw (v14) -- (v6) -- (v11) to[bend right=20] (v13);
				\draw (v12) -- (v7);
				\draw (v11) -- (v7) to[bend left=20] (v5);
				\draw (v7) -- (v6) -- (v5) -- (v13) -- (v6);
				\node[anchor=base] at (-1.3,-1.4) {$\X$};
			\end{scope}
			\draw[->,>=stealth'] (2.1,3.7) to node[above,scale=.7,yshift=2pt] {Steps 1 \&{} 2} ++(2,0);
			\begin{scope}[xshift=8.7cm,yshift=.5cm]
				\foreach \x/\clr in {0/0, 1/0, 2/0, 3/1, 4/2, 5/2, 6/3, 7/4, 8/4, 9/4, 10/5, 11/5, 12/6, 13/7, 14/7, 15/8} {
					\pgfmathsetmacro{\hoek}{90 - 360 * \x / \numverts}
					\def\curcolor{mycolor\clr}
					\draw[fill=\curcolor,draw=\curcolor!75!black] (\hoek:1cm) node[vertex,draw,semithick] (v\x) {};
				}
				\foreach \x/\clr in {1/0, 3/1, 4.5/2, 6/3, 8/4, 10.5/5, 12/6, 13.5/7, 15/8} {
					\pgfmathsetmacro{\hoek}{90 - 360 * \x / \numverts}
					\pgfmathsetmacro{\perphoek}{\hoek - 90}
					\def\curcolor{mycolor\clr}
					\pgfmathtruncatemacro{\lbl}{\clr + 1}
					\pgfmathtruncatemacro{\numnewverts}{\clr + 5}
					\foreach \y in {1,...,\numnewverts} {
						\pgfmathsetmacro{\nodedist}{1.4 + .25 * (\y - 1)}
						\pgfmathsetmacro{\fillfactor}{(70 - 6*\y)^2 / 70 + 12}
						\pgfmathsetmacro{\drawfactor}{(90 - 8*\y)^2 / 70 + 20}
						\pgfmathsetmacro{\textfactor}{(90 - 6*\y)^2 / 135 + 45}
						\draw (\hoek:\nodedist cm) node[vertex,draw,line width=.5pt,fill=\curcolor!\fillfactor,draw=\curcolor!75!black!\drawfactor] (x-\clr-\y) {} ++(\perphoek:.6mm) node[anchor=west,scale=.5,rotate=\perphoek,\curcolor!45!black!\textfactor] {${(\lbl,\y)}$};
						\pgfmathtruncatemacro{\yprev}{\y - 1}
						\ifnum\y>1 \draw (x-\clr-\y) -- (x-\clr-\yprev); \fi
					}
				}
				\foreach \x/\clr in {0/0, 1/0, 2/0, 3/1, 4/2, 5/2, 6/3, 7/4, 8/4, 9/4, 10/5, 11/5, 12/6, 13/7, 14/7, 15/8} {
					\draw (v\x) -- (x-\clr-1);
				}
				\draw (v1) -- (v0) to[bend right=20] (v2);
				\draw (v2) -- (v10) -- (v1) -- (v2);
				\draw (v5) -- (v14) (v8) to[bend left=15] (v5);
				\draw (v14) -- (v6) -- (v11) to[bend right=20] (v13);
				\draw (v12) -- (v7);
				\draw (v11) -- (v7) to[bend left=20] (v5);
				\draw (v7) -- (v6) -- (v5) -- (v13) -- (v6);
				\draw[fill=white,draw] (1.6,4.5) node[vertex,draw] (x00) {} ++(0,.2) node[scale=.5] {$(0,0)$};
				\foreach \x/\clr in {1/0, 3/1, 4.5/2, 6/3, 8/4, 10.5/5, 12/6, 13.5/7, 15/8} {
					\pgfmathsetmacro{\hoek}{90 - 360 * \x / \numverts}
					\begin{scope}[rotate=\hoek]
						\draw (x-\clr-2) ++(0,.25) coordinate (Lctrl-\clr-base);
						\foreach \y in {1,...,4} {
							\pgfmathsetmacro{\nodedist}{1.4 + .25 * (\clr + 4 + \y) + .3 - .12 * \y}
							\coordinate (Lctrl-\clr-\y) at (\nodedist,.3);
							\coordinate (Rctrl-\clr-\y) at (\nodedist,-.8);
						}
					\end{scope}
				}
				\draw[rounded corners] (x-0-2) -- (Lctrl-0-base) -- (x00);
				\draw[rounded corners] (x-1-2) -- (Lctrl-1-base) -- (Lctrl-1-1) -- (x00);
				\draw[rounded corners] (x-2-2) -- (Lctrl-2-base) -- (Rctrl-1-2) -- (Lctrl-1-2) -- (x00);
				\draw[rounded corners] (x-3-2) -- (Lctrl-3-base) -- (Rctrl-2-2) -- (Rctrl-1-3) -- (Lctrl-1-3) -- (x00);
				\draw[rounded corners] (x-4-2) -- (Lctrl-4-base) -- (Rctrl-3-2) -- (Rctrl-2-3) -- (Rctrl-1-4) -- (Lctrl-1-4) -- (x00);
				\draw[rounded corners] (x-5-2) -- (Lctrl-5-base) -- (Lctrl-5-1) -- (Lctrl-6-2) -- (Lctrl-7-3) -- (Lctrl-8-4) -- (Rctrl-8-4) -- (x00);
				\draw[rounded corners] (x-6-2) -- (Lctrl-6-base) -- (Lctrl-6-1) -- (Lctrl-7-2) -- (Lctrl-8-3) -- (Rctrl-8-3) -- (x00);
				\draw[rounded corners] (x-7-2) -- (Lctrl-7-base) -- (Lctrl-7-1) -- (Lctrl-8-2) -- (Rctrl-8-2) -- (x00);
				\draw[rounded corners] (x-8-2) -- (Lctrl-8-base) -- (Lctrl-8-1) -- (Rctrl-8-1) -- (x00);
				\node[anchor=base] at (-1.9,-3.2) {$\X_2$};
				\coordinate (G2_bottom) at (-.2,-4.4);
			\end{scope}
			\draw[->,>=stealth',rounded corners] (G2_bottom) |- ++(-1,-.5) to node[below,scale=.7,yshift=-2pt] {Step 3} ++(-1,0) -- ++(-1,0);
			\begin{scope}[xshift=1.5cm,yshift=-3.5cm,scale=.85,vertex/.style={circle,fill,inner sep=1pt}]
				\foreach \x/\clr in {0/0, 1/0, 2/0, 3/1, 4/2, 5/2, 6/3, 7/4, 8/4, 9/4, 10/5, 11/5, 12/6, 13/7, 14/7, 15/8} {
					\pgfmathsetmacro{\hoek}{90 - 360 * \x / \numverts}
					\draw[fill=black,draw=black] (\hoek:1cm) node[vertex,draw,semithick] (v\x) {};
				}
				\foreach \x/\clr in {1/0, 3/1, 4.5/2, 6/3, 8/4, 10.5/5, 12/6, 13.5/7, 15/8} {
					\pgfmathsetmacro{\hoek}{90 - 360 * \x / \numverts}
					\pgfmathtruncatemacro{\numnewverts}{\clr + 5}
					\foreach \y in {1,...,\numnewverts} {
						\pgfmathsetmacro{\nodedist}{1.4 + .25 * (\y - 1)}
						\draw (\hoek:\nodedist cm) node[vertex,draw,line width=.5pt,fill=black,draw=black] (x-\clr-\y) {};
						\pgfmathtruncatemacro{\yprev}{\y - 1}
						\ifnum\y>1 \draw (x-\clr-\y) -- (x-\clr-\yprev); \fi
					}
				}
				\foreach \x/\clr in {0/0, 1/0, 2/0, 3/1, 4/2, 5/2, 6/3, 7/4, 8/4, 9/4, 10/5, 11/5, 12/6, 13/7, 14/7, 15/8} {
					\draw (v\x) -- (x-\clr-1);
				}
				\draw (v1) -- (v0) to[bend right=20] (v2);
				\draw (v2) -- (v10) -- (v1) -- (v2);
				\draw (v5) -- (v14) (v8) to[bend left=15] (v5);
				\draw (v14) -- (v6) -- (v11) to[bend right=20] (v13);
				\draw (v12) -- (v7);
				\draw (v11) -- (v7) to[bend left=20] (v5);
				\draw (v7) -- (v6) -- (v5) -- (v13) -- (v6);
				\draw[fill=black,draw=black] (1.6,4.5) node[vertex,draw] (x00) {};
				\foreach \x/\clr in {1/0, 3/1, 4.5/2, 6/3, 8/4, 10.5/5, 12/6, 13.5/7, 15/8} {
					\pgfmathsetmacro{\hoek}{90 - 360 * \x / \numverts}
					\begin{scope}[rotate=\hoek]
						\draw (x-\clr-2) ++(0,.25) coordinate (Lctrl-\clr-base);
						\foreach \y in {1,...,4} {
							\pgfmathsetmacro{\nodedist}{1.4 + .25 * (\clr + 4 + \y) + .3 - .12 * \y}
							\coordinate (Lctrl-\clr-\y) at (\nodedist,.3);
							\coordinate (Rctrl-\clr-\y) at (\nodedist,-.8);
						}
					\end{scope}
				}
				\draw[rounded corners] (x-0-2) -- (Lctrl-0-base) -- (x00);
				\draw[rounded corners] (x-1-2) -- (Lctrl-1-base) -- (Lctrl-1-1) -- (x00);
				\draw[rounded corners] (x-2-2) -- (Lctrl-2-base) -- (Rctrl-1-2) -- (Lctrl-1-2) -- (x00);
				\draw[rounded corners] (x-3-2) -- (Lctrl-3-base) -- (Rctrl-2-2) -- (Rctrl-1-3) -- (Lctrl-1-3) -- (x00);
				\draw[rounded corners] (x-4-2) -- (Lctrl-4-base) -- (Rctrl-3-2) -- (Rctrl-2-3) -- (Rctrl-1-4) -- (Lctrl-1-4) -- (x00);
				\draw[rounded corners] (x-5-2) -- (Lctrl-5-base) -- (Lctrl-5-1) -- (Lctrl-6-2) -- (Lctrl-7-3) -- (Lctrl-8-4) -- (Rctrl-8-4) -- (x00);
				\draw[rounded corners] (x-6-2) -- (Lctrl-6-base) -- (Lctrl-6-1) -- (Lctrl-7-2) -- (Lctrl-8-3) -- (Rctrl-8-3) -- (x00);
				\draw[rounded corners] (x-7-2) -- (Lctrl-7-base) -- (Lctrl-7-1) -- (Lctrl-8-2) -- (Rctrl-8-2) -- (x00);
				\draw[rounded corners] (x-8-2) -- (Lctrl-8-base) -- (Lctrl-8-1) -- (Rctrl-8-1) -- (x00);
				\node[anchor=base] at (-1.9,-3.2) {$\decolor(\X)$};
			\end{scope}
		\end{tikzpicture}
		\caption{A worked example illustrating the steps of the decoloring procedure of \autoref{def:decolor}. The labels next to the vertices refer to the color of that vertex.}
		\label{fig:decolor}
	\end{figure}
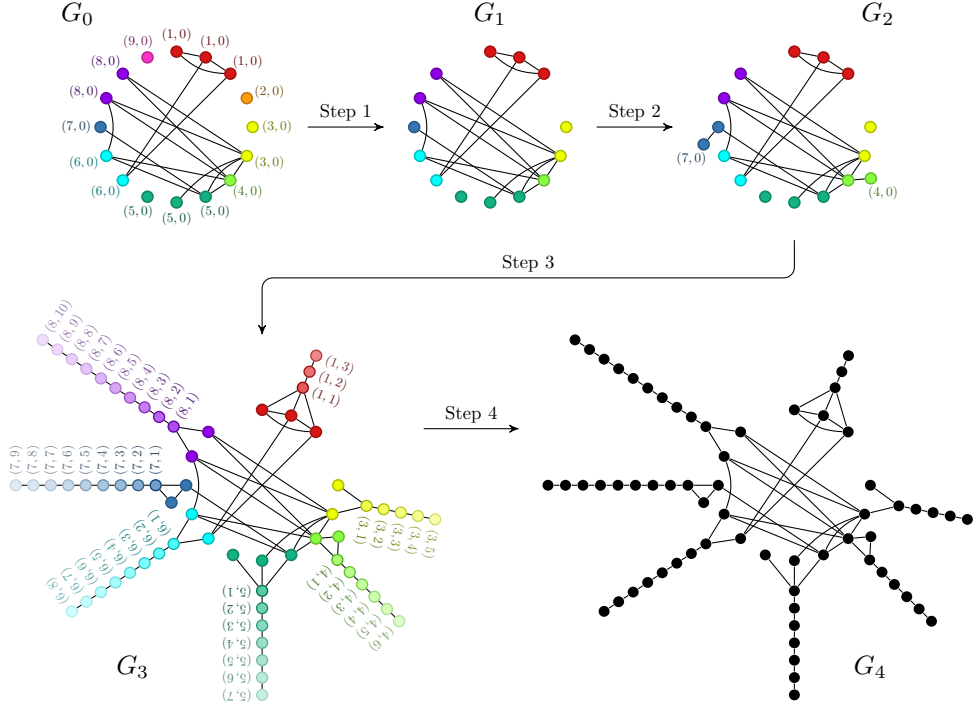
\end{definition}

\begin{theorem}
	\label{thm:decolor}
	The decoloring procedure
	\[ \decolor : \{\text{vertex-colored graphs}\} \to \{\text{uncolored, connected graphs}\} \]
	from \autoref{def:decolor} preserves $\QIso$ and $\Delta$, in the sense that for all vertex-colored graphs $\X,\X'$ there is a $C^*$\nobreakdash-algebra isomorphism
	\[ \decolor : \begin{tikzcd} \!\! \QIso(\X,\X') \arrow[r, "\sim" {yshift=-1pt}] & \QIso(\decolor(\X),\decolor(\X')) \end{tikzcd} \]
	and for all vertex-colored graphs $\X,\X',\X''$ the following diagram commutes:
	\[ \begin{tikzcd}
		\QIso(\X,\X'') \arrow[r, "\Delta_{\X'}"] \arrow[d, "\decolor", "\sim"' {rotate=90, anchor=south, yshift=-1pt, xshift=1pt}]  &  \QIso(\X,\X') \otimes \QIso(\X',\X'') \arrow[d, "\decolor", "\sim"' {rotate=90, anchor=south, yshift=-1pt, xshift=1pt}] \\
		\QIso(\decolor(\X),\decolor(\X'')) \arrow[r, "\Delta_{\decolor(\X')}"]   &  \QIso(\decolor(\X), \decolor(\X')) \otimes \QIso(\decolor(\X'), \decolor(\X'')).
	\end{tikzcd} \]
\end{theorem}

Before proving \autoref{thm:decolor}, we note the following consequence:

\begin{corollary}
	\label{cor:decolor}
	Let $\decolor : \{\text{vertex-colored graphs}\} \to \{\text{graphs}\}$ be the decoloring procedure from \autoref{def:decolor}.
	Then for all vertex-colored graphs $\X,\X'$, we have:
	\begin{enumerate}[label=(\alph*)]
		\item\label{itm:decol:iso} $\X \cong \X'$ if and only if $\decolor(\X) \cong \decolor(\X')$;
		\item\label{itm:decol:qiso} $\X \cong_q \X'$ if and only if $\decolor(\X) \cong_q \decolor(\X')$;
		\item\label{itm:decol:Aut} $\Aut(\X) \cong \Aut(\decolor(\X))$;
		\item\label{itm:decol:Qut} $\Qut(\X) \cong \Qut(\decolor(\X))$.
	\end{enumerate}
\end{corollary}
\begin{proof}
	We prove the statements in a slightly different order (\ref{itm:decol:Qut} before \ref{itm:decol:Aut}).
	\begin{enumerate}[label=(\alph*)]
		\item We have $\X \cong \X'$ if and only if $\QIso(\X,\X')$ has a non-trivial one-dimensional $*$\nobreakdash-representation (see \autoref{rmk:iso-qiso-algebra}), so the result follows because $\QIso(\X,\X') \cong \QIso(\decolor(\X),\decolor(\X'))$.
		
		\item We have $\X \cong_q \X'$ if and only if $\QIso(\X,\X') \neq 0$, so the result follows since $\QIso(\X,\X') \cong \QIso(\decolor(\X),\decolor(\X'))$.
		
		\stepcounter{enumi}
		\item The isomorphism $\decolor : \QIso(\X,\X) \stackrel{\sim}{\longrightarrow} \QIso(\decolor(\X),\decolor(\X))$ shows that $C(\Qut(\X)) \cong C(\Qut(\decolor(\X)))$ as $C^*$\nobreakdash-algebras, and the commutative diagram in \autoref{thm:decolor} shows that this isomorphism preserves the comultiplication, so it is an isomorphism of compact quantum groups.
		
		\addtocounter{enumi}{-2}
		\item This follows from \ref{itm:decol:Qut}, since $\Aut(\X)$ can be recovered from $\Qut(\X)$ (see \autoref{lem:ab}).
		\qedhere
	\end{enumerate}
\end{proof}

\subsection{Color refinement}

The key to our decoloring procedure is a combinatorial technique called \emph{color refinement} (also known as the \emph{$1$-dimensional Weisfeiler--Leman algorithm}), which we will now describe.

Given a vertex-colored graph $\X$ with coloring function $c : V(\X) \to C$, we denote the color classes by $\mathcal C_r := \{i \in V(\X) \mid c(i) = r\}$ for all $r \in C$.
The non-empty color classes form a partition $\mathcal P_c := \{\mathcal C_r \mid r \in C, \mathcal C_r \neq \varnothing\}$ of $V(\X)$. 
We say that two coloring functions $c : V(\X) \to C$, $c' : V(\X) \to C'$ are \emph{equivalent}, written $c \equiv c'$, if $\mathcal P_c = \mathcal P_{c'}$.

\begin{definition}
	\label{def:refinement}
	Let $c : V(\X) \to C$ be a coloring function.
	The \emph{refinement of $c$} is the coloring function $\refine{c} : V(\X) \to C \times \N^C$ given by $\refine{c}(i) = (c(i) , (n_r(i))_{r \in C})$, where $n_r(i) := |N(i) \cap \mathcal C_r|$ denotes the number of neighbors of $i$ with color $r$.
	This refines $c$ in the sense that the corresponding partition $\mathcal P_{\refine{c}}$ is a refinement of $\mathcal P_c$.
	
	The \emph{stable refinement of $c$} is the coloring $\stabref{c}(i) := (\refine[k]{c}(i))_{k=0}^\infty$, where $\refine[k]{c}$ denotes the $k$\nobreakdash-fold iterated refinement of $c$ (with $\refine[0]{c} := c$).
	Since the partition of $V(\X)$ into color classes is refined in every step and there are only finitely many partitions of $V(\X)$, we have $\stabref{c} \equiv \refine[k]{c} \equiv \refine[k + 1]{c}$ for some large enough $k$.
\end{definition}

\begin{remark}
	Alternatively, the stable refinement can be defined as the coarsest stable coloring finer than $c$.
	However, this property only defines $\stabref{c}$ up to equivalence.
	Since quantum isomorphism of colored graphs is not invariant under permutations of the colors, we have to be more precise and define $\stabref{c}$ as a specific coloring function, as we did in \autoref{def:refinement}.%
		\footnote{In the computer science literature, this issue is usually solved in a different way: to compare two graphs $\X$ and $\X'$ using color refinement, just compute the stable refinement of the coloring of the disjoint union $\X \sqcup \X'$.
		This way, passing to an equivalent coloring no longer makes any difference, so the colors can now be represented in a more economical way (for instance, as integers $1,\ldots,k$ instead of infinite sequences).
		Our definition of $\stabref{c}$ as an infinite sequence serves to automatically carry out the same refinement in every graph, so that the colors of $\X$ in the refinement of $\X \sqcup \X'$ do not depend on $\X'$.}
\end{remark}

\begin{definition}
	Let $\X$ and $\X'$ be vertex-colored graphs whose colorings $c : V(\X) \to C$, $c' : V(\X') \to C$ take values in a common set of colors $C$.
	We say that a vertex $i \in V(\X)$ and a vertex $j \in V(\X')$ \emph{are distinguished by color refinement} if and only if $\stabref{c}(i) \neq \stabref{c'}(j)$.
\end{definition}

\begin{remark}
	\label{rmk:degree}
	Vertices of different degrees are always distinguished after one refinement, since we have $\sum_{r \in C} n_r(i) = \deg(i) \neq \deg(j) = \sum_{r \in C} n_r(j)$, so there must be some $r \in C$ such that $n_r(i) \neq n_r(j)$.
\end{remark}

The most important part of the proof of \autoref{thm:decolor} will be to show that the colors of the graph $\X_2$ from \autoref{def:decolor} can be recovered by applying color refinement to the corresponding uncolored graph $\decolor(\X)$.
For this purpose, the first step of our decoloring procedure was to add certain ``gadgets'' to the graph, which can later be used to recognize the original color classes.
This is a common technique in areas related to graph isomorphisms/automorphisms.
We have chosen paths of different lengths as our gadgets, but other gadgets can likely be used as well.

To be able to reason about these paths, we introduce the following (non-standard) terminology.

\begin{definition}
	A \emph{topological path} in a graph $\X$ is a simple path $x_0\cdots x_k$ such that $\deg(x_0),\deg(x_k) \neq 2$ and $\deg(x_1) = \cdots = \deg(x_{k-1}) = 2$.
	In other words, each of the endpoints of the path is either a leaf (a vertex of degree~$1$) or a branch vertex (a vertex of degree~$\geq 3$), and there are no branch vertices on the interior of the path.
\end{definition}

Every vertex of degree $2$ either lies on a unique topological path or on a ``topological loop''.
The following lemma shows that vertices of degree $2$ that lie on a topological path can often be distinguished from one another by looking at the lengths or the endpoints of the (unique) topological paths containing them.

\begin{lemma}
	\label{lem:topological-path}
	Let $\X$ and $\X'$ be graphs, let $p_0\cdots p_k$ and $q_0\cdots q_\ell$ be topological paths in $\X$ and $\X'$, respectively, and let $i \in \{1,\ldots,k-1\}$ and $j \in \{1,\ldots,\ell-1\}$.
	Assume that one of the following is true:
	\begin{enumerate}[label=(\roman*).,ref=(\roman*)]
		\item\label{itm:top:diff-degree} $\{\deg(p_0),\deg(p_k)\} \neq \{\deg(q_0),\deg(q_\ell)\}$, or
		\item\label{itm:top:diff-len} $(\deg(p_0),\deg(p_k)) = (\deg(q_0),\deg(q_\ell))$ and $k \neq \ell$, or
		\item\label{itm:top:diff-pos} $(\deg(p_0),\deg(p_k)) = (\deg(q_0),\deg(q_\ell))$ and $\deg(p_0) \neq \deg(p_k)$ and $i \neq j$.
	\end{enumerate}
	Then $p_i$ and $q_j$ are distinguished by color refinement.
\end{lemma}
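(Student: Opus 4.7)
The plan is to assign to every interior vertex $v = p_i$ of a topological path $p_0 \cdots p_k$ a ``signature''
\[ \sigma(v) := \{(i,\deg(p_0)),\ (k - i,\deg(p_k))\}, \]
viewed as an unordered pair, and then to establish two things: \emph{(a)} that $\sigma(p_i) \neq \sigma(q_j)$ in each of the three hypotheses \ref{itm:top:diff-degree}--\ref{itm:top:diff-pos}; and \emph{(b)} that the stable refinement $\overline{c}$ refines the signature partition, i.e., $\overline{c}(p_i) = \overline{c}(q_j)$ implies $\sigma(p_i) = \sigma(q_j)$. Combining these two statements immediately yields the lemma.

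Part \emph{(a)} is direct case analysis. In \ref{itm:top:diff-degree} the multisets of endpoint degrees already differ. In \ref{itm:top:diff-len} the sum of the two distance coordinates of $\sigma(p_i)$ equals $k$, whereas for $\sigma(q_j)$ it equals $\ell \neq k$, so the multisets cannot coincide. In \ref{itm:top:diff-pos} the condition $\deg(p_0) \neq \deg(p_k)$ forces any matching between the two pairs to align $(i,\deg(p_0))$ with $(j,\deg(p_0))$, which would require $i = j$ and contradict the hypothesis.

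For part \emph{(b)} I would argue as follows. By \autoref{rmk:degree}, $\overline{c}$ refines the degree partition, so the interior (degree-$2$) vertices of topological paths never share a stable color with the endpoints (degree $\neq 2$). Assume $\overline{c}(p_i) = \overline{c}(q_j)$; since the stable coloring is equitable, the multisets of stable colors of the two neighbors of $p_i$ and of $q_j$ coincide. Iterating this observation, by induction on the minimum distance from $p_i$ (respectively $q_j$) to an endpoint of its topological path, one obtains a matching of the two outward walks from $p_i$ along the spine with those from $q_j$, preserving stable colors step by step. Because an interior vertex has degree $2$ with both neighbors on the path, each walk is forced to stay on the spine until it reaches a degree-$\neq 2$ endpoint, at which point \autoref{rmk:degree} forces the endpoint it meets to have the same degree as its matched partner. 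Reading off the lengths and endpoint degrees of the four matched walks yields $\sigma(p_i) = \sigma(q_j)$.

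The main technical obstacle is the fact that the two neighbors of a degree-$2$ vertex may share the same stable color, so the walk along the spine does not come with a canonical orientation. This is exactly why $\sigma$ is defined as an \emph{unordered} pair and why the matching is tracked only at the level of multisets of stable colors: with this setup, no coherent choice of ``left'' versus ``right'' is ever required, and the outward induction closes even when many of the degree-$2$ vertices near $p_i$ or $q_j$ turn out to be mutually indistinguishable by $\overline{c}$.
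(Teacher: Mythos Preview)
Your proposal is correct and takes a somewhat different route from the paper's sketch. The paper argues by forward induction on the number of refinement rounds: after one round the endpoints separate from the interior vertices by degree, after the next round the vertices adjacent to an endpoint peel off, and so on inward; one then checks that this peeling eventually separates $p_i$ from $q_j$ under each of the three hypotheses. You instead work directly with the stable coloring $\overline{c}$ and exploit its equitability: starting from $\overline{c}(p_i)=\overline{c}(q_j)$ you propagate equality of stable colors outward along both spines via multiset cancellation at each degree-$2$ step, forcing the two walks to reach endpoints simultaneously and with matching endpoint degrees, whence $\sigma(p_i)=\sigma(q_j)$. Packaging the conclusion into the unordered signature $\sigma$ handles all three cases at once and sidesteps any need to track round numbers; the paper's version, in exchange, is more constructive in that it pinpoints the round at which the distinction first appears. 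One small wording issue: ``induction on the minimum distance from $p_i$ to an endpoint'' is not the right induction variable---that distance is fixed once $p_i$ is. What you actually run is induction on the step count $s$ of the outward walk: having fixed $\epsilon\in\{\pm 1\}$ at $s=1$ from the neighbor-multiset equality, the step $\{\overline{c}(p_{i+s-2}),\overline{c}(p_{i+s})\}=\{\overline{c}(q_{j+(s-2)\epsilon}),\overline{c}(q_{j+s\epsilon})\}$ together with the inductive hypothesis and multiset cancellation yields $\overline{c}(p_{i+s})=\overline{c}(q_{j+s\epsilon})$, and likewise in the negative direction with the same $\epsilon$.
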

\hyphenation{to-po-lo-gi-cal}
\begin{proof}[Proof sketch]
	Proceed by induction.
	The endpoints $p_0$, $p_k$, $q_0$ and $q_\ell$ of the topological paths are distinguished from the interior vertices (and possibly from one another) after one refinement, by a degree argument (see \autoref{rmk:degree}).
	Then, after the second refinement, $p_1$, $p_{k-1}$, $q_1$ and $q_{\ell - 1}$ are distinguished from the remaining interior vertices (and possibly from one another) because they are the only ones having a neighbor in one of the color classes of the endpoints after one refinement.
	Continuing in this way, it is not hard to see that the listed conditions are sufficient for $p_i$ and $q_j$ to be distinguished by color refinement.
	The details are left to the reader.
\end{proof}

The same conclusion cannot necessarily be reached for the endpoints of the topological paths (i.e., $i \in \{0,k\}$, $j \in \{0,\ell\}$), unless we know which other topological paths they connect to.

The following lemma, which forms the core of the proof of \autoref{thm:decolor}, shows that the colors of the graph $\X_2$ after Step~2 of the decoloring procedure can be recovered by applying color refinement to the uncolored graph $\X_3 = \decolor(\X)$.

\begin{lemma}
	\label{lem:decoloring-refinement}
	Let $\X$ and $\X'$ be vertex-colored graphs with colors in $\N_1 \times \{0\}$, and let $c_2 : V(\X_2) \to \N^2$, $c_2' : V(\X_2') \to \N^2$ \textup(resp.{} $c_3 : V(\X_3) \to \{1\}$, $c_3' : V(\X_3') \to \{1\}$\textup) denote the colorings of the respective graphs after Step~2 \textup(resp.{} Step~3\textup) of \autoref{def:decolor}.
	Then for all $i \in V(\X_2) = V(\X_3)$ and all $j \in V(\X_2') = V(\X_3')$, we have
	\[ c_2(i) \neq c_2'(j) \quad \Longrightarrow \quad \stabref{c_3}(i) \neq \stabref{c_3'}(j). \]
\end{lemma}
\begin{proof}
	Denote the vertices that were added to $\X$ and $\X'$ in the decoloring procedure of \autoref{def:decolor} by $x_{(c,r)}$ and $x_{(c,r)}'$, respectively.
	We show that all vertices $i \in V(\X_2)$, $j \in V(\X_2')$ that have different colors are distinguished by color refinement (applied to the trivial colorings of the graphs $\X_3 = \decolor(\X)$ and $\X_3' = \decolor(\X')$).
	For this purpose, let us say that the color $(c,r) \in \N^2$ is \emph{recognized by color refinement} if $\stabref{c_3}(i) \neq \stabref{c_3'}(j)$ whenever exactly one of $i$ and $j$ has color $(c,r)$ in the coloring after Step~2.
	If this is the case, then the color class of $(c,r)$ in the coloring $c_2$ (resp.{} $c_2'$) is a union of color classes of $\stabref{c_3}$ (resp.{} $\stabref{c_3'}$).
	In this setting, we will tacitly identify the colors in the latter union of color classes with $(c,r)$.
	
	We start by categorizing the vertices of degree $1$ and $2$ in $\X_3$ and $\X_3'$.
	First of all, a vertex of degree $1$ in $\X_3$ (resp.{} $\X_3'$) must be of one of the following three types:
	\begin{itemize}
		\item A vertex corresponding to an isolated vertex in $\X$ (resp.{} $\X'$);
		\item The vertex $x_{(c,c+4)}$ (resp.{} $x_{(c',c'+4)}'$) for any color $(c,0) \in C_\X$ (resp.{} $(c',0) \in C_{\X'}$);
		\item The vertex $x_{(0,0)}$ (resp.{} $x_{(0,0)}'$), in case $|C_\X| = 1$ (resp.{} $|C_{\X'}| = 1$).
	\end{itemize}
	Likewise, a vertex of degree $2$ in $\X_3$ (resp.{} $\X_3'$) must be of one of the following four types:
	\begin{itemize}
		\item A vertex corresponding to a vertex of degree $1$ in $\X$ (resp.{} $\X'$);
		\item The vertex $x_{(c,1)}$ (resp.{} $x_{(c',1)}'$) for some color $(c,0) \in C_\X$ (resp.{} $(c',0) \in C_{\X'}$) that occurs exactly once in $\X$ (resp.{} $\X'$);
		\item The vertices $x_{(c,3)},\ldots,x_{(c,c+3)}$ (resp.{} $x_{(c',3)}',\ldots,x_{(c',c'+3)}'$) for any color $(c,0) \in C_\X$ (resp.{} $(c',0) \in C_{\X'}$);
		\item The vertex $x_{(0,0)}$ (resp.{} $x_{(0,0)}'$), in case $|C_\X| = 2$ (resp.{} $|C_{\X'}| = 2$).
	\end{itemize}
	Consequently, a vertex of degree $2$ that lies on a topological path between a branch vertex and a leaf must be one of the following two types:
	\begin{itemize}
		\item The vertices $x_{(c,3)},\ldots,x_{(c,c+3)}$ (resp.{} $x_{(c',3)}',\ldots,x_{(c',c'+3)}'$) for any color $(c,0) \in C_\X$ (resp.{} $(c',0) \in C_{\X'}$);
		\item The vertex $x_{(c,1)}$ (resp.{} $x_{(c',1)}$), in case the color class of $(c,0) \in C_\X$ (resp.{} $(c',0) \in C_{\X'}$) consists of a single, isolated vertex.
	\end{itemize}
	By comparing the parameters of these topological paths, it follows from \autoref{rmk:degree} and \autoref{lem:topological-path} that for all $c \geq 1$, the colors $(c,3),\ldots,(c,c+3)$ are recognized by color refinement.
	It follows that $x_{(c,c+4)}$ (resp.{} $x_{(c,c+4)}'$) is the only vertex of degree $1$ with a neighbor of color $(c,c+3)$, so the color $(c,c+4)$ is also recognized by color refinement.
	Likewise, $x_{(c,2)}$ (resp.{} $x_{(c,2)}'$) is the only branch vertex with a neighbor of color $(c,3)$, so the color $(c,2)$ is also recognized by color refinement.
	To see that the color $(0,0)$ is recognized by color refinement, we distinguish two cases.
	\begin{itemize}
		\item If $|C_\X| = 1$, then $x_{(0,0)}$ is the only vertex of degree $1$ with a neighbor of color $(c,2)$, so $x_{(0,0)}$ is distinguished from all vertices $j \in V(\X') \setminus \{x_{(0,0)}'\}$.
		\item If $|C_\X| > 1$, then $x_{(0,0)}$ is the only vertex which has at least one neighbor in each of the colors $\{(c,2) \mid c \in C_\X\}$, so $x_{(0,0)}$ is distinguished from all vertices $j \in V(\X') \setminus \{x_{(0,0)}'\}$.
	\end{itemize}
	This shows that $\stabref{c_3}(i) \neq \stabref{c_3'}(j)$ whenever $c_2(i) = (0,0)$ and $c_2'(j) \neq (0,0)$.
	Proving the same with the roles of $\X$ and $\X'$ reversed shows that the color $(0,0)$ is recognized by color refinement.
	Since $x_{(c,1)}$ (resp.{} $x_{(c,1)}'$) is the only vertex with color different from $(c,3)$ and $(0,0)$ that has a neighbor of color $(c,2)$, it follows that the color $(c,1)$ is also recognized by color refinement.
	Finally, for every $c \geq 1$, the vertices with color $(c,0)$ are the only ones with color different from $(c,2)$ with a neighbor of color $(c,1)$, so the original colors of $\X$ and $\X'$ are also recognized by color refinement.
\end{proof}

\subsection{Modifications that leave the quantum isomorphism \texorpdfstring{$C^*$\nobreakdash-algebra}{C*-algebra} invariant}
\label{subsec:modifications}

To prove \autoref{thm:decolor}, we show that the quantum isomorphism $C^*$\nobreakdash-algebras are preserved under various modifications to the graph: adding a vertex in a new color (\myautoref{lem:modifications}{itm:mod:isolated}), adding all possible edges between different color classes, assuming that no such edges existed before (\myautoref{lem:modifications}{itm:mod:ST}), and applying color refinement (\autoref{cor:qiso-refinement}).
For this we use the following notation.

\begin{definition}
	Given a graph $\X$, a vertex $i \in V(\X)$ and a vertex set $S \subseteq V(\X)$, we write $e(i,S)$ for the number of edges between $i$ and $S$.
	If $i \in S$, then we also write $\deg_S(i) := e(i,S)$ (the \emph{degree of $i$ relative to $S$}).
\end{definition}

To prove that $\QIso$ is invariant under color refinement, we need the following lemmas.

\begin{lemma}
	\label{lem:local-degree}
	Let $\X$ and $\X'$ be vertex-colored graphs whose colorings $c : V(\X) \to C$, $c' : V(\X') \to C$ take values in a common set of colors $C$, let $u = (u_{ij})_{i \in V(\X), j \in V(\X')}$ be the generating matrix of $\QIso(\X,\X'$), and let $S \subseteq V(\X)$, $S' \subseteq V(\X')$ be vertex sets such that $u_{k\ell} = 0$ whenever $k \in S$, $\ell \notin S'$ or $k \notin S$, $\ell \in S'$.
	Then for all $i \in S$, $j \in S'$ we have $\deg_S(i) \, u_{ij} = \deg_{S'}(j) \, u_{ij}$.
\end{lemma}
\begin{proof}
	We have
	\begin{align*}
		\deg_S(i) \, u_{ij} &= \sum_{k \in N(i) \cap S} \sum_{\ell \in V(\X)} u_{k\ell} u_{ij} \\
		&= \sum_{k \in N(i) \cap S} \sum_{\ell \in N(j) \cap S'} u_{k\ell} u_{ij} \\
		&= \sum_{k \in V(\X)} \sum_{\ell \in N(j) \cap S'} u_{k\ell} u_{ij} \\
		&= \deg_{S'}(j) \, u_{ij},
	\end{align*}
	where the first and fourth equality follow since $\sum_{\ell \in V(\X)} u_{k\ell} = 1$ for all $k$ and $\sum_{k \in V(\X)} u_{k\ell} = 1$ for all $\ell$, respectively, and the second and third equality follow because we have $u_{k\ell} = 0$ whenever $k \in S$, $\ell \notin S'$ or $k \notin S$, $\ell \in S'$ (by assumption), as well as $u_{k\ell} u_{ij} = 0$ whenever $ik \in E(\X)$, $j\ell \notin E(\X)$ or $ik \notin E(\X)$, $j\ell \in E(\X)$ (by definition of $\Qut(\X)$).
\end{proof}

\begin{lemma}
	\label{lem:qiso-refinement}
	Let $\X$ and $\X'$ be vertex-colored graphs whose colorings $c : V(\X) \to C$, $c' : V(\X') \to C$ take values in a common set of colors $C$.
	Then $\QIso((\X,c),(\X',c')) = \QIso((\X,\refine{c}),(\X',\refine{c'}))$.
\end{lemma}
\begin{proof}
	Let $u = (u_{ij})_{i,j}$ be the generating matrix of $\QIso((\X,c),(\X',c'))$.
	We show that $u_{ij} = 0$ whenever $\refine{c}(i) \neq \refine{c'}(j)$.
	If $c(i) \neq c'(j)$, then this follows immediately from the definition of $\QIso(\X,\X')$, so assume that $c(i) = c'(j)$.
	Then there is some color $r$ such that $i$ and $j$ have a different number of neighbors with color $r$.
	Define $S,T \subseteq V(\X)$ and $S',T' \subseteq V(\X')$ by
	\begin{align*}
		& S := \{k \in V(\X) \mid c(k) = c(i)\} && \text{and} && T := \{k \in V(\X) \mid c(k) = r\}; \\
		& S' := \{\ell \in V(\X') \mid c'(\ell) = c(j)\} && \text{and} && T' := \{\ell \in V(\X) \mid c(k) = r\}.
	\end{align*}
	We distinguish two cases.
	\begin{itemize}
		\item If $r= c(i) = c'(j)$, then $i$ and $j$ have a different number of neighbors in their own color classes $S = T$ and $S' = T'$.
		In other words, we have $\deg_S(i) \neq \deg_{S'}(j)$, so it follows from \autoref{lem:local-degree} that $u_{ij} = 0$.
		
		\item If $r \neq c(i) = c'(j)$, then $i$ and $j$ have a different number of neighbors in the color classes $T \niton i$ and $T' \niton j$, respectively (both corresponding to the color $r$).
		Equivalently, we have $e(i,T) \neq e(j,T')$.
		Note that $e(i,T) = \deg_{S \cup T}(i) - \deg_S(i)$ and $e(j,T') = \deg_{S' \cup T'}(j) - \deg_S(j)$, so it follows from \autoref{lem:local-degree} that
		\begin{align*}
			e(i,T) \, u_{ij} \, &= \, \deg_{S \cup T}(i) \, u_{ij} \, - \, \deg_S(i) \, u_{ij} \\
			&= \, \deg_{S' \cup T'}(j) \, u_{ij} \, - \, \deg_{S'}(j) \, u_{ij} \, = \, e(j,T') \, u_{ij}.
		\end{align*}
		Since $e(i,T) \neq e(j,T')$, it follows that $u_{ij} = 0$.
	\end{itemize}
	Either way, we find that $u_{ij} = 0$, as claimed.
	Now it is easy to see that $\QIso((\X,c),(\X',c')) = \QIso((\X,\refine{c}),(\X',\refine{c'}))$.
\end{proof}

\begin{corollary}
	\label{cor:qiso-refinement}
	Let $\X$ and $\X'$ be vertex-colored graphs whose colorings $c : V(\X) \to C$, $c' : V(\X') \to C$ take values in a common set of colors $C$.
	Then $\QIso((\X,c),(\X',c')) = \QIso((\X,\stabref{c}),(\X',\stabref{c'}))$.
\end{corollary}
\begin{proof}
	Choose $k \in \N$ sufficiently large so that $\refine[k]{c} \equiv \stabref{c}$ and $\refine[k]{c'} \equiv \stabref{c'}$.
	Then we have $\stabref{c}(i) = \stabref{c}(j)$ if and only if $\refine[k]{c}(i) = \refine[k]{c}(j)$, so the generating matrices of the quantum isomorphism $C^*$\nobreakdash-algebras $\QIso((\X,\stabref{c}),(\X',\stabref{c'}))$ and $\QIso((\X,\refine[k]{c}),(\X',\refine[k]{c'}))$ satisfy the exact same relations.
	Therefore we have $\QIso((\X,\stabref{c}),(\X',\stabref{c'})) = \QIso((\X,\refine[k]{c}),(\X',\refine[k]{c'}))$.
	By \autoref{lem:qiso-refinement}, the result follows.
\end{proof}

\hyphenation{generali-zation}
To add vertices and edges, we use the following lemma, which is a generalization of \cite[Lemma 3.1]{DKRSZ} (from $\Qut$ to $\QIso$ algebras).

\begin{lemma}[{cf.{} \cite[Lemma 3.1]{DKRSZ}}]
	\label{lem:modifications}
	Let $\X$ and $\X'$ be vertex-colored graphs whose colorings $c : V(\X) \to C$, $c' : V(\X') \to C$ take values in a common set of colors $C$.
	\begin{enumerate}[label=\textup(\Roman*\textup)]
		\item\label{itm:mod:isolated}
		Let $\tilde C := C \sqcup \{r\}$ for some new color $r \notin C$, and let $\tilde \X$ \textup(resp.{} $\tilde \X'$\textup) be the vertex-colored graph obtained from $\X$ \textup(resp.{} $\X'$\textup) by adding an isolated vertex with color $r$.
		Then $\QIso(\X,\X') \cong \QIso(\tilde \X,\tilde \X')$.
		Furthermore, this operation preserves the co-composition \textup(in the sense of \autoref{thm:decolor}\textup).
		
		\item\label{itm:mod:ST}
		Let $r_1,r_2 \in C$ be distinct colors such that both $\X$ and $\X'$ have no edges with one endpoint colored $r_1$ and the other endpoint colored $r_2$, and let $\tilde \X$ \textup(resp.{} $\tilde \X'$\textup) be the vertex-colored graph obtained from $\X$ \textup(resp.{} $\X'$\textup) by adding all possible such edges.
		Then $\QIso(\X,\X') = \QIso(\tilde \X,\tilde \X')$.
		\textup(In particular, this operation preserves the co-composition.\textup)
	\end{enumerate}
\end{lemma}

The proof of \autoref{lem:modifications} is exactly the same as the proof of \cite[Lemma 3.1]{DKRSZ} and is therefore omitted.

\subsection{Proof of the decoloring theorem}

We now come to the proof of the main result of this section.

\begin{proof}[{Proof of \autoref{thm:decolor}}]
	We prove that each of the steps from \autoref{def:decolor} preserves $\QIso$ and $\Delta$ (in the sense described in \autoref{thm:decolor}).
	For $s \in \{1,2,3\}$, let $\X_s$ denote the graph after the $s$-th step of the decoloring procedure applied to $\X$.
	
	For Step 1, let $\X,\X',\X''$ be vertex-colored graphs with colors in the set $\N_1 \times \{0\}$.
	To prove that Step 1 preserves $\QIso$, we distinguish two cases.
	\begin{itemize}
		\item If $\X$ and $\X'$ use the same colors (i.e.{} $C_\X = C_{\X'}$, in the notation of \autoref{def:decolor}), then Step 1 adds the same vertices to $\X$ and $\X'$ (in the same colors), so it follows from \myautoref{lem:modifications}{itm:mod:isolated} that $\QIso(\X,\X') \cong \QIso(\X_1,\X_1')$.
		\item If one of $\X$ and $\X'$ uses a color that does not occur in the other, then the same is true for $\X_1$ and $\X_1'$, and so it follows from \autoref{rmk:qiso-vertex-counts} that $\QIso(\X,\X') = \QIso(\X_1,\X_1') = 0$.
	\end{itemize}
	Likewise, to prove that Step 1 preserves $\Delta$, we distinguish two cases.
	\begin{itemize}
		\item If $\X$, $\X'$ and $\X''$ all use the same colors, then Step 1 adds the same vertices to each of $\X$, $\X'$ and $\X''$ (in the same colors), so it follows from \myautoref{lem:modifications}{itm:mod:isolated} that Step 1 preserves $\Delta$.
		\item If the sets of colors used by $\X$, $\X'$ and $\X''$ are not all equal, then at least one of $\QIso(\X,\X')$ and $\QIso(\X',\X'')$ is trivial, so we have $\QIso(\X,\X') \otimes \QIso(\X',\X'') = 0$, and therefore also $\QIso(\X_1,\X_1') \otimes \QIso(\X_1',\X_1'') = 0$.
		It follows that Step 1 preserves $\Delta$.
	\end{itemize}
	
	It follows immediately from \myautoref{lem:modifications}{itm:mod:ST} that Step 2 preserves $\QIso$ and $\Delta$, since this step adds all possible edges between fixed pairs of colors (the same pairs for all graphs).
	
	To show that Step 3 preserves $\QIso$ and $\Delta$, it suffices to show that $\QIso(\X_2,\X_2') = \QIso(\X_3,\X_3')$, since equality of the generating matrices implies that the co-composition is preserved.
	Let $c_2 : V(\X_2) \to \N^2$ and $c_3 : V(\X_3) \to \{1\}$ denote the colorings of $\X_2$ and $\X_3$ (and likewise $c_2'$, $c_3'$ for $\X_2'$ and $\X_3'$).
	By \autoref{lem:decoloring-refinement} and universality of the respective quantum isomorphism $C^*$\nobreakdash-algebras, we have natural maps
	\[ \QIso(\X_3,\X_3') \to \QIso(\X_2,\X_2') \to \QIso((\X_3,\stabref{c_3}),(\X_3',\stabref{c_3'})), \]
	where each map preserves the generating matrix $(u_{ij})_{i \in V(\X_2), j \in V(\X_2')}$ but adds more constraints.
	But by \autoref{cor:qiso-refinement}, we have $\QIso(\X_3,\X_3') = \QIso((\X_3,\stabref{c_3}),(\X_3',\stabref{c_3'}))$, so in fact we have equality throughout:
	\[ \QIso(\X_3,\X_3') = \QIso(\X_2,\X_2') = \QIso((\X_3,\stabref{c_3}),(\X_3',\stabref{c_3'})). \qedhere \]
\end{proof}

\section{An uncolored graph whose quantum automorphism group is the dual of a solution group}
\label{sec:uncolored}

In this short section, we use the results from the previous sections to define, for every linear system $Mx = b$ over $\Z_2$, an uncolored graph $G'(M,b)$ such that $\Qut(G'(M,b))$ is isomorphic to the dual of the solution group $\Gamma(M,0)$.

\begin{definition}
	\label{defgraph'}
	For $M \in \Z_2^{m \times n}$ and $b \in \Z_2^m$, we define $G'(M,b)$ to be the uncolored graph obtained by taking the graph $G(M,b)$ from \autoref{defgraph} and applying the decoloring procedure of \autoref{def:decolor}.
\end{definition}

\begin{theorem}
	\label{thm:allsolgroupsuncolored}
	Let $M\in \Z_2^{m\times n}$ and $b \in \Z_2^m$.
	Then $\Qut(G'(M,b))$ is isomorphic to the dual of the solution group $\Gamma(M,0)$.
\end{theorem}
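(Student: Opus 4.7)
The plan is to assemble the theorem directly from the three main ingredients already established in the paper: \autoref{lem:inhom2hom}, \autoref{cor:vertexcolored}, and \autoref{thm:decolor}. The only real work left is case analysis on whether $b = 0$ and then a clean composition of isomorphisms.

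First I would split on whether $b$ vanishes. In the case $b = 0$, \autoref{cor:vertexcolored} immediately produces the vertex-colored graph $G(M,0)$ with $\Qut(G(M,0)) \cong \widehat{\Gamma(M,0)}$ as compact quantum groups, so we may set $\widetilde{G} := G(M,0)$. In the case $b \neq 0$, I apply \autoref{lem:inhom2hom} to obtain a matrix $M' \in \mathbb{F}_2^{(m+1)\times(n+2)}$ with an isomorphism of abstract groups $\Gamma(M,b) \cong \Gamma(M',0)$. Since the dual construction $\Gamma \mapsto \widehat{\Gamma}$ is functorial on group isomorphisms (the $*$-isomorphism $C^*(\Gamma(M,b)) \to C^*(\Gamma(M',0))$ sending generators to generators intertwines the comultiplications, as both are defined by $u_g \mapsto u_g \otimes u_g$), we obtain an isomorphism of compact quantum groups $\widehat{\Gamma(M,b)} \cong \widehat{\Gamma(M',0)}$. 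Then \autoref{cor:vertexcolored} applied to $M'$ and the zero right-hand side gives $\Qut(G(M',0)) \cong \widehat{\Gamma(M',0)} \cong \widehat{\Gamma(M,b)}$, so we set $\widetilde{G} := G(M',0)$.

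In either case we now have a vertex-colored graph $\widetilde{G}$ with $\Qut(\widetilde{G}) \cong \widehat{\Gamma(M,b)}$ as compact quantum groups. Finally, I apply the decoloring procedure of \autoref{def:decolor} to $\widetilde{G}$ to obtain an uncolored graph $G := \widetilde{G}\,'$, and \autoref{thm:decolor} yields $\Qut(G) \cong \Qut(\widetilde{G})$. Composing the two isomorphisms gives $\Qut(G) \cong \widehat{\Gamma(M,b)}$, as required.

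There is no real obstacle at this stage, since all the heavy lifting has already been done: the homogeneous colored case is handled by \autoref{cor:vertexcolored}, the inhomogeneous-to-homogeneous reduction by \autoref{lem:inhom2hom}, and the decoloring by \autoref{thm:decolor}. The only subtlety worth flagging explicitly in the written proof is that the isomorphism produced by \autoref{lem:inhom2hom} is an isomorphism of the abstract solution groups, and one should record the one-line verification that this lifts to an isomorphism of their duals as compact quantum groups (so that the chain of isomorphisms is valid in the category of compact quantum groups rather than merely as $C^*$-algebras).
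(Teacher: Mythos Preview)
Your proposal is correct and follows essentially the same approach as the paper's own proof: reduce to the homogeneous case via \autoref{lem:inhom2hom}, invoke \autoref{cor:vertexcolored} to get a vertex-colored graph with the right quantum automorphism group, and then apply the decoloring procedure of \autoref{thm:decolor}. The paper is slightly terser (it simply says ``we may assume $b=0$'' rather than writing out the case split and the one-line check that a group isomorphism induces an isomorphism of duals), but the content is identical.
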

\begin{proof}
	By \autoref{cor:vertexcolored}, $\Qut(G(M,b))$ is isomorphic to the dual of $\Gamma(M,0)$.
	Since $G'(M,b) = \decolor(G(M,b))$, it follows from \myautoref{cor:decolor}{itm:decol:Qut} that
	\[ \Qut(G'(M,b)) \cong \Qut(G(M,b)) \cong (\Gamma(M,0))\: \widehat{}. \qedhere \]
\end{proof}

\begin{remark}
	Although we will have no use for this, we point out that it is also possible to construct graphs whose quantum automorphism group is isomorphic to the dual of any \emph{inhomogeneous} solution group.
	To that end, we note that every inhomogeneous solution group is isomorphic to a homomgeneous solution group:
\end{remark}

\begin{lemma}
	\label{lem:inhom2hom}
	Let $M \in \mathbb{Z}_2^{m \times n}$ and $b \in \mathbb{Z}_2^m$ such that $b \ne 0$.
	Then there exists $M' \in \mathbb{Z}_2^{(m+1) \times (n+2)}$ such that $\Gamma(M,b) \cong \Gamma(M',0)$.
\end{lemma}
\begin{proof}
	We form $M'$ by adding two columns and then one row to $M$ as follows:
	\[M' = \begin{pmatrix} M & \one + b & \one \\ \boldsymbol{0} & 1 & 1\end{pmatrix},\]
	where $\one$ denotes the all ones vector. Considering now the solution group $\Gamma(M',0)$ with generators $\{x_i \ | \ i \in [n+2]\}$, we see that $x_{n+2}$ commutes with all generators (this will take the place of $\gamma$ in $\Gamma(M,b)$) and that $x_{n+1} = x_{n+2}$. Therefore, for $k \in [m]$, the relation $\prod_{i \in S_k(M')} x_i = 1$ becomes
	\[\prod_{i \in S_k(M)} x_i = \begin{cases}1 & \text{if } b_k = 0\\ x_{n+2} & \text{if } b_k = 1.\end{cases}\]
	It is now easy to see that if $\{x'_i \ | \ i \in [n]\} \cup \{\gamma\}$ are the generators of $\Gamma(M,b)$, then the map $x_i \mapsto x'_i$ for $i \in [n]$ and $x_{n+2} \mapsto \gamma$ extends to an isomorphism of $\Gamma(M,b)$ and $\Gamma(M',0)$.
\end{proof}

\begin{corollary}
	The dual quantum group of every homogeneous or inhomogeneous solution group can be obtained as the quantum automorphism group of some \textup(finite, uncolored\textup) graph.
\end{corollary}
\begin{proof}
	Immediate from \autoref{thm:allsolgroupsuncolored} and \autoref{lem:inhom2hom}.
\end{proof}

\section{A sequence of perfect homogeneous solution groups}
\label{sec:perfect}

In the previous section, we showed how to construct a graph whose quantum automorphism group is isomorphic to the dual of the homogeneous solution group of any given linear constraint system.
To get a graph with quantum symmetry and trivial automorphism group, we must find a linear constraint system whose homogeneous solution group is perfect (i.e.{} has trivial abelianization).
For this we use the following construction.

\begin{definition}\label{deflinsysgroup}
	Let $H$ be a finite group.
	Let $O_2(H) = \{h_1,\ldots,h_\ell\} \subseteq H$ denote the set of elements of order $2$ in $H$, and let
	\begin{align*}
		T_2(H) := \big\{\{h_i,h_j,h_k\} \, \mid \, &h_i,h_j,h_k \in O_2(H), \ [h_i,h_j] = [h_i,h_k] = [h_j,h_k] = 1, \\&\ \text{and} \ h_ih_jh_k = 1\big\}
	\end{align*}
	be the set of all triples\footnote{Note that every set $\{h_i,h_j,h_k\}$ in $T_2(H)$ consists of three different elements. Indeed, if (say) $h_i = h_j$, then one has $h_ih_j = h_i^2 = 1$, and therefore $h_k = h_ih_jh_k = 1$, contrary to the assumption that $h_k$ has order $2$.} of elements of order $2$ that commute pairwise and multiply to the identity.
	Then we define $M_H \in \Z_2^{T_2(H) \times O_2(H)}$ to be the coefficient matrix of the linear system
	\[ x_{h_i} + x_{h_j} + x_{h_k} \equiv 0 \pmod{2} \qquad \text{for all $\{h_i,h_j,h_k\} \in T_2(H)$}. \]
	In other words, the variables $x_{h_i}$ in this linear system correspond to elements $h_i \in H$ of order $2$, and the equations state that $x_{h_i} + x_{h_j} + x_{h_k} \equiv 0 \pmod{2}$ whenever $h_i$, $h_j$ and $h_k$ commute and multiply to the identity.
\end{definition}

The main result of this section is that $\Gamma(M_{A_n},0)$ is perfect for all $n \geq 7$, where $A_n$ denotes the alternating group.

\begin{proposition}
	\label{prop:MH-group}
	For every finite group $H$, the homogeneous solution group $\Gamma(M_H,0)$ is the finitely presented group generated by elements $x_{h_i}$ corresponding to all elements $h_1,\ldots,h_\ell \in H$ of order $2$ in $H$, with the following relations:
	\begin{align}
		x_{h_i}^2 &= 1 \qquad \text{for all $i \in [\ell]$}; \label{eqn:MH-order-2} \\
		[x_{h_i},x_{h_j}] &= 1 \qquad \text{whenever $[h_i,h_j] = 1$}; \label{eqn:MH-commutes} \\
		x_{h_i}x_{h_j}x_{h_k} &= 1 \qquad \text{whenever $h_i$, $h_j$ and $h_k$ commute pairwise} \label{eqn:MH-triples}\\
  &\phantom{= 1} \qquad \, \text{and multiply to $1$}. \nonumber
	\end{align}
\end{proposition}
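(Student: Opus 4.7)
The plan is to show that the presentation claimed in the proposition is exactly what one obtains by unfolding \autoref{def:solgroup} for the specific matrix $M_H$. Since $b = 0$, the homogeneous solution group $\Gamma(M_H,0)$ is generated by $x_{h_i}$ for $h_i \in O_2(H)$, subject to: (i) $x_{h_i}^2 = 1$; (ii) $x_{h_i}x_{h_j} = x_{h_j}x_{h_i}$ whenever there exists an equation (i.e.\ a triple $\{h_i,h_j,h_k\} \in T_2(H)$) in which both $x_{h_i}$ and $x_{h_j}$ appear; (iii) $x_{h_i}x_{h_j}x_{h_k} = 1$ for every $\{h_i,h_j,h_k\} \in T_2(H)$. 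Relations (i) and (iii) coincide verbatim with \eqref{eqn:MH-order-2} and \eqref{eqn:MH-triples}, so the only thing to verify is that the commutativity relations (ii) are equivalent (modulo the others) to \eqref{eqn:MH-commutes}.

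The first direction is immediate: by the definition of $T_2(H)$, if a triple $\{h_i,h_j,h_k\}$ lies in $T_2(H)$, then in particular $[h_i,h_j] = 1$ in $H$, so \eqref{eqn:MH-commutes} implies (ii). For the converse, suppose $h_i, h_j \in O_2(H)$ satisfy $[h_i,h_j] = 1$ in $H$. If $h_i = h_j$ then the claim is trivial, so assume $h_i \neq h_j$. The key observation is that the product $h_k := h_i h_j$ is then also an element of order $2$: we have $h_k \neq 1$ (since $h_i \neq h_j^{-1} = h_j$), and $h_k^2 = h_i h_j h_i h_j = h_i^2 h_j^2 = 1$ using that $h_i$ and $h_j$ commute. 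Moreover $h_k$ commutes with both $h_i$ and $h_j$ (being a product of them), and $h_ih_jh_k = h_ih_j(h_ih_j) = 1$. Hence $\{h_i,h_j,h_k\} \in T_2(H)$ is an equation in the linear system defining $\Gamma(M_H,0)$, and relation (ii) yields $[x_{h_i},x_{h_j}] = 1$, i.e.\ \eqref{eqn:MH-commutes} holds.

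This argument is essentially a bookkeeping exercise once one spots the trick that two commuting involutions $h_i \neq h_j$ automatically form a commuting triple with their product; that single observation is the only nontrivial step and is not an obstacle. The remainder is just matching the three lists of relations. No further manipulation or presentation calculus is needed, so the proposition follows in a few lines.
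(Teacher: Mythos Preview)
Your proposal is correct and follows essentially the same approach as the paper: both identify that the only nontrivial point is matching the commutativity relation (ii) with \eqref{eqn:MH-commutes}, and both resolve this via the observation that two distinct commuting involutions $h_i,h_j$ always complete to a triple $\{h_i,h_j,h_ih_j\}\in T_2(H)$.
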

\begin{proof}
	The only thing that is not immediate from the definitions is that \relautoref{solgroup:rel2} from \autoref{def:solgroup} simplifies to \eqref{eqn:MH-commutes}.
	To see this, note that for every pair $h_i,h_j \in O_2(H)$ with $h_i \neq h_j$ and $[h_i,h_j] = 1$, the product $h_k := h_ih_j$ ($ = h_jh_i$) also has order $2$ and commutes with $h_i$ and $h_j$, and we have $h_ih_jh_k = 1$, hence $\{h_i,h_j,h_k\} \in T_2(H)$.
	In other words, in the linear system given by $M_H$, two variables $x_{h_i}$ and $x_{h_j}$ occur in the same equation together if and only if $[h_i,h_j] = 1$.
\end{proof}
Since $h_1,\ldots,h_\ell \in H$ satisfy the relations of \autoref{prop:MH-group}, the following corollaries are immediate.\leavevmode
\begin{corollary}
	\label{cor:natural-hom}
	For every finite group $H$, there is a homomorphism $\phi : \Gamma(M_H,0) \to H$ such that $\phi(x_{h_i}) = h_i$ for all $h_i \in O_2(H)$.
\end{corollary}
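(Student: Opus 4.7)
The plan is to invoke the universal property of finitely presented groups, using the explicit presentation of $\Gamma(M_H,0)$ given in \autoref{prop:MH-group}. A homomorphism out of a finitely presented group is determined by specifying images of the generators that satisfy the defining relations, so it suffices to check that the assignment $x_{h_i} \mapsto h_i$ respects relations \eqref{eqn:MH-order-2}, \eqref{eqn:MH-commutes}, and \eqref{eqn:MH-triples}.

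Each of these checks is essentially tautological, given how $M_H$ was built from $H$. First, $h_i^2 = 1$ holds in $H$ since each $h_i \in O_2(H)$ has order $2$, verifying \eqref{eqn:MH-order-2}. Second, if $[h_i,h_j] = 1$ in $\Gamma(M_H,0)$ is imposed precisely because $[h_i,h_j]=1$ holds in $H$, so \eqref{eqn:MH-commutes} is automatic. Third, the triple relation $x_{h_i}x_{h_j}x_{h_k} = 1$ is only imposed when $\{h_i,h_j,h_k\} \in T_2(H)$, i.e. when $h_ih_jh_k = 1$ in $H$, so \eqref{eqn:MH-triples} is immediate.

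There is essentially no obstacle: the presentation of $\Gamma(M_H,0)$ was designed exactly so that $h_1,\ldots,h_\ell \in H$ satisfy the defining relations, and so the universal property of the finitely presented group gives a unique group homomorphism $\phi : \Gamma(M_H,0) \to H$ with $\phi(x_{h_i}) = h_i$ for all $i \in [\ell]$. I would write this up in a single short paragraph, explicitly listing the three relation-checks as above and concluding by the universal property.
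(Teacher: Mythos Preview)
Your proposal is correct and matches the paper's approach exactly: the paper simply notes that $h_1,\ldots,h_\ell \in H$ satisfy the relations of \autoref{prop:MH-group} and declares the corollary immediate, which is precisely the universal-property argument you spell out in detail.
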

\begin{corollary}
	\label{cor:non-trivial-non-commutative}
	Let $H$ be a finite group.
	\begin{enumerate}[label=(\alph*)]
		\item If $H$ has an element of order $2$, then $\Gamma(M_H,0)$ is non-trivial.
		\item If $H$ has two non-commuting elements of order $2$, then $\Gamma(M_H,0)$ is non-commutative.
	\end{enumerate}
\end{corollary}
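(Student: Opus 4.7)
The plan is to derive both parts as direct consequences of the natural homomorphism $\phi : \Gamma(M_H,0) \to H$ from \autoref{cor:natural-hom}. The key observation is that $\phi$ transports non-triviality and non-commutativity from $H$ back to $\Gamma(M_H,0)$: if $\phi(g) \neq 1$ in $H$, then $g \neq 1$ in $\Gamma(M_H,0)$, and similarly if $\phi(g)$ and $\phi(g')$ fail to commute in $H$, then $g$ and $g'$ fail to commute in $\Gamma(M_H,0)$.

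For part (a), I would pick an element $h \in H$ of order $2$. Since $\phi(x_h) = h \neq 1$, the generator $x_h \in \Gamma(M_H,0)$ is nontrivial, so $\Gamma(M_H,0) \neq \{1\}$.

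For part (b), I would pick two non-commuting elements $h, h' \in O_2(H)$. Then $[h,h'] \neq 1$ in $H$, and since $\phi$ is a homomorphism, $\phi([x_h,x_{h'}]) = [h,h'] \neq 1$. Hence $[x_h,x_{h'}] \neq 1$ in $\Gamma(M_H,0)$, so the group is non-abelian.

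There is no real obstacle here; the whole content of the corollary is packaged into \autoref{cor:natural-hom}, and the argument reduces to applying a homomorphism. The only mild subtlety is ensuring that $x_h$ is well-defined as a generator (i.e.\ that $h \in O_2(H)$ really does index a generator of $\Gamma(M_H,0)$), which is immediate from the definition.
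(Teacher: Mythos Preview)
Your proposal is correct and matches the paper's approach exactly: the paper declares the corollary to be ``immediate'' from \autoref{cor:natural-hom}, and your argument spells out precisely why, pulling back non-triviality and non-commutativity along the homomorphism $\phi$.
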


In particular, for the alternating group, it follows that $\Gamma(M_{A_n},0)$ is non-trivial for all $n \geq 4$ and non-commutative for all $n \geq 5$.

To prove that $\Gamma(M_{A_n},0)$ is perfect for all $n \geq 7$, we first consider the case $n = 7$.
Here \eqref{eqn:MH-triples} gives us the following relations\footnote{One can show that all triples in $T_2(A_7)$ are of the form \eqref{A7:rel1} or \eqref{A7:rel2}, but we won't need that in the proof.}:
\begin{align}
	x_{(ab)(cd)}x_{(ab)(ef)}x_{(cd)(ef)} &= 1 \, \text{for all disjoint $\{a,b\}$, $\{c,d\}$, $\{e,f\} \in \textstyle\binom{[7]}{2}$}; \label{A7:rel1}\\
	x_{(ab)(cd)}x_{(ac)(bd)}x_{(ad)(bc)} &= 1 \, \text{for all $\{a,b,c,d\} \in \textstyle\binom{[7]}{4}$}. \label{A7:rel2}
\end{align}
Using these, we will show that $\Gamma(M_{A_7},0)$ is perfect.
Arguably the easiest way to do so is to write down the matrix $M_{A_7}$ and compute its rank.
Using a computer algebra system, one can check that $M_{A_7}$ is a $140 \times 105$ matrix over $\Z_2$ which has rank $105$, so it follows that the linear system $M_{A_7}x = 0$ has no non-trivial solutions.
From this it can easily be deduced that $\Gamma(M_{A_7},0)$ is perfect.
However, this proof is not very illuminating, since it hides the subtleties of the finitely presented group $\Gamma(M_{A_7},0)$ in a computation that is easy for a computer but too large for most humans to fathom.
Instead, we give a human-readable proof of a combinatorial flavour.
For this we use the Kneser graphs $K(S,2)$ over an arbitrary set $S$, which we define as follows.

\begin{definition}
	Let $S$ be a finite set and let $\ell \in \N$.
	The \emph{\textup(labeled\textup) Kneser graph} $K(S,\ell)$ is the graph with vertex set $V(K(S,\ell)) := \binom{S}{\ell}$ (the set of all $\ell$-element subsets of $S$), where two vertices are adjacent if and only if the corresponding subsets are disjoint.
	If $S = [n]$, we simply write $K(n,\ell)$ for the Kneser graph $K([n],\ell)$.
\end{definition}

\begin{theorem}
	\label{thm:A7}
	The homogeneous solution group $\Gamma(M_{A_7},0)$ is perfect.
\end{theorem}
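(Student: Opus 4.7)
The plan is to prove the equivalent dual statement: every function $x \colon O_2(A_7) \to \F_2$ satisfying $x(h_i) + x(h_j) + x(h_k) = 0$ for every triple $\{h_i,h_j,h_k\} \in T_2(A_7)$ must be identically zero. I would identify $O_2(A_7)$ with the edge set $E(K(7,2))$ of the Kneser graph via the bijection $(ab)(cd) \leftrightarrow \{\{a,b\},\{c,d\}\}$. Under this identification, the triples of the form \eqref{A7:rel1} index the triangles of $K(7,2)$ (three pairwise disjoint $2$-subsets of $[7]$), while those of the form \eqref{A7:rel2} index the three edges of $K(F,2) \cong 3K_2$ for each $F \in \binom{[7]}{4}$.

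\textbf{Step 1 (Bilinearization).} For any three pairwise disjoint pairs $w,p,q$, \eqref{A7:rel1} gives $x(\{p,q\}) = x(\{w,p\}) + x(\{w,q\})$. Fix a base vertex $w = \{6,7\}$ and write $[5] := \{1,\ldots,5\}$. Setting $z_p := x(\{w,p\})$ for $p \in \binom{[5]}{2}$ yields $x(\{p,q\}) = z_p + z_q$ for all disjoint $p,q \subseteq [5]$.

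\textbf{Step 2 (Cycle-space reduction).} Expanding \eqref{A7:rel2} on each 4-subset $F \subset [5]$ via Step 1 gives $\sum_{e \in \binom{F}{2}} z_e = 0$. Viewing $z$ as an edge-labeling of the complete graph on $[5]$, these five constraints force all vertex-degrees of $z$ in $K_5$ to be equal modulo 2, whence by the handshake lemma they are all zero and $\sum_e z_e = 0$. Thus $z$ lies in the 5-dimensional subspace of $\F_2^{\binom{[5]}{2}}$ spanned by the indicator functions of the 4-cycles of $K_5$.

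\textbf{Step 3 (Killing the remaining freedom).} Apply \eqref{A7:rel2} to the $\binom{7}{4} - \binom{5}{4} = 30$ 4-subsets meeting $w$, namely $\{6,7,a,b\}$, $\{6,a,b,c\}$ and $\{7,a,b,c\}$ with $a,b,c \in [5]$. These involve ``mixed'' $x$-values on edges of $K(7,2)$ whose constituent pairs contain an element of $w$. Combining them with instances of \eqref{A7:rel1} on mixed triples such as $(\{a,6\},\{b,c\},\{d,e\})$ and $(\{a,6\},\{b,c\},\{d,7\})$ lets one express each mixed $x$-value as a linear combination of $z$-values and a modest collection of auxiliary symbols, and a systematic elimination of the auxiliary symbols (exploiting the $S_5$-symmetry of the configuration inside $[5]$ and the Petersen-like structure of $K(S,2)$ for $|S|=5$) produces enough additional $\F_2$-linear equations in $z$ to force $z \equiv 0$.

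\textbf{Step 4 (Conclusion via symmetry).} Once $z \equiv 0$, we have $x(e) = 0$ for every edge $e$ of $K(7,2)$ that is incident to $w$ or has both pairs contained in $[5]$. Running Steps 1--3 with any other base vertex $w' \in \binom{[7]}{2}$ gives the same vanishing relative to $w'$; since every edge of $K(7,2)$ lies in the ``visible cone'' of at least one such $w'$ (indeed of $\binom{3}{2}=3$ of them, one for each pair disjoint from the 4-element support of the edge), we conclude $x \equiv 0$ on all of $E(K(7,2))$. Therefore $\Gamma(M_{A_7},0)$ is perfect.

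The main obstacle is Step 3: one must carry out the combinatorial elimination of the mixed $x$-values cleanly and verify that the resulting linear equations span the orthogonal complement, inside the cycle space of $K_5$, of the 5-dimensional freedom left from Step 2. This is precisely where the hypothesis $n \geq 7$ is used --- for smaller $n$ there are too few ``spectator'' elements in $[7] \setminus (w \cup p \cup q)$ to provide the R1/R2 relations needed, and indeed the analogous solution groups for $A_5$ and $A_6$ fail to be perfect.
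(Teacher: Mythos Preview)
Your overall strategy---reducing perfectness to the vanishing of every $\F_2$-valued solution of the triple relations, and identifying $O_2(A_7)$ with $E(K(7,2))$---matches the paper exactly, and Steps~1 and~2 are correct: fixing $w=\{6,7\}$, the triangle relations do give $x(\{p,q\})=z_p+z_q$ for disjoint $p,q\subset[5]$, and the five $K_4$-constraints on $z$ are independent and cut the freedom down to the $5$-dimensional even-weight cycle space of $K_5$. Step~4 is also sound once Step~3 is in place.

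The genuine gap is Step~3, and you acknowledge it yourself. What you have written there is not a proof but a description of a computation you have not performed: you introduce the ``mixed'' $x$-values as auxiliary symbols, assert that a ``systematic elimination'' using the remaining \eqref{A7:rel1} and \eqref{A7:rel2} relations yields enough linear constraints to kill the residual $5$-dimensional freedom in $z$, and invoke $S_5$-symmetry and ``the Petersen-like structure'' without exhibiting a single concrete relation that actually reduces the dimension below~$5$. Since Step~3 is exactly where the content of the theorem lives---everything else is bookkeeping---the proposal as it stands is a plan, not a proof. Nothing you have written rules out, for example, that the mixed relations only reproduce constraints already implied by Step~2.

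By contrast, the paper avoids this global elimination entirely. It argues by contradiction: assuming some $x_{(ab)(cd)}=-1$, it colours the edges of $K(7,2)$ red or green accordingly, then works \emph{locally} in the neighbourhood of a single vertex (a Petersen graph) where \eqref{A7:rel1} forces a parity condition on red edges along every cycle, and \eqref{A7:rel2} partitions the $15$ Petersen edges into five triples with an even number of red edges each. A short case analysis on the three triangles through a fixed red edge then forces an odd number of red edges on a $6$-cycle inside some neighbourhood---a contradiction. This local argument sidesteps your Step~3 altogether; if you want to salvage your approach, you must actually exhibit the five independent relations coming from the mixed constraints and verify they kill the remaining cycle-space freedom.
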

\begin{proof}
	We show that the abelianization of $\Gamma(M_{A_7},0)$ is trivial. This is equivalent to showing that the system of equations \eqref{A7:rel1} and \eqref{A7:rel2} has no non-trivial solutions for $x_{(ab)(cd)}\in \{\pm 1\}$, since this proves that the group has no non-trivial one-dimensional representations.
	
	Consider the Kneser graph $K(7,2)$.
	The graph has vertices $\{a,b\}$ with $a,b \in [7]$ and $a \neq b$, where two vertices are connected if and only if the corresponding subsets are disjoint.
	Therefore, the edges of $K(7,2)$ are of the form $\{a,b\}\{c,d\}$, where $|\{a,b,c,d\}|=4$.
	
	To each edge $\{a,b\}\{c,d\}$ in $K(7,2)$, associate the variable $x_{(ab)(cd)}\in \{\pm 1\}$.
	We color the edge $\{\{a,b\}, \{c,d\}\}$ red if $x_{(ab)(cd)} = -1$ and green otherwise.
	We will show that if there is a red edge in $K(7,2)$, then it is impossible to color all edges of the graph consistent with Equations \eqref{A7:rel1} and \eqref{A7:rel2}.
	Note that Equation \eqref{A7:rel1} states that every triangle in $K(7,2)$ either has $0$ or $2$ red edges, whereas Equation \eqref{A7:rel2} states something similar for certain other edge configurations in $K(7,2)$.\footnote{A graph-theoretic interpretation of \eqref{A7:rel2} is that, for every triple of edges such that each pair is at distance two (where we define the distance between two edges to be the minimum of the distances of their endpoints), either $0$ or $2$ of them are red. But we won't make use of this characterization.}
	
	Assume for the sake of contradiction that we have a coloring of the edges of $K(7,2)$ that is consistent with \eqref{A7:rel1} and \eqref{A7:rel2} with at least one red edge $xy \in E(K(7,2))$.
	Since $K(7,2)$ is strongly regular with parameters $(21, 10,3,6)$, we know that $xy$ is part of three triangles.
	By \eqref{A7:rel1}, each of these triangles must have exactly two red edges, so one of the following must be true:
	\begin{enumerate}[label=(\alph*)]
		\item\label{proof:perfecta} All three additional red edges (in the three triangles containing $xy$) are incident to the same endpoint of $xy$;
		\item\label{proof:perfectb} Two of the additional red edges (in the three triangles containing $xy$) are incident to one of the endpoints and the remaining red edge is incident to the other.
	\end{enumerate}
	These two cases are illustrated in \autoref{figure1}.
	We will show that both cases are not possible.
	
	\begin{figure}[H]
		\centering
		\begin{tikzpicture}[scale=.9]
			\draw[Red] (0,0)--(1,0);\draw[Red] (0,0)--(0.5,1);\draw[Red] (0,0)--(0.5,2);\draw[Red] (0,0)--(0.5,-1);
			\draw[Green] (1,0)--(0.5,-1);\draw[Green] (1,0)--(0.5,1);\draw[Green] (1,0)--(0.5,2);
			\draw[black,fill=black] (0,0) circle (2pt);\draw[black,fill=black] (1,0) circle (2pt);\draw[black,fill=black] (0.5,1) circle (2pt);\draw[black,fill=black] (0.5,2) circle (2pt);\draw[black,fill=black] (0.5,-1) circle (2pt);
			\coordinate[label=left:$x$] (1) at (-0.1,0);\coordinate[label=right:$y$] (1) at (1.1,0);
			\coordinate[label=left:$(a)$] (1) at (-1.2,1.75);
		\end{tikzpicture}
		\hspace{22mm}
		\begin{tikzpicture}[scale=.9]
			\draw[Red] (0,0)--(1,0);\draw[Red] (0,0)--(0.5,1);\draw[Red] (0,0)--(0.5,2);\draw[Green] (0,0)--(0.5,-1);
			\draw[Red] (1,0)--(0.5,-1);\draw[Green] (1,0)--(0.5,1);\draw[Green] (1,0)--(0.5,2);
			\draw[black,fill=black] (0,0) circle (2pt);\draw[black,fill=black] (1,0) circle (2pt);\draw[black,fill=black] (0.5,1) circle (2pt);\draw[black,fill=black] (0.5,2) circle (2pt);\draw[black,fill=black] (0.5,-1) circle (2pt);
			\coordinate[label=left:$x$] (1) at (-0.1,0);\coordinate[label=right:$y$] (1) at (1.1,0);
			\coordinate[label=left:$(b)$] (1) at (-1.2,1.75);
		\end{tikzpicture}
		\caption{\hyperref[proof:perfecta]{Case (a)} (left) and \hyperref[proof:perfectb]{Case (b)} (right), up to swapping $x$ and $y$.}
		\label{figure1}
	\end{figure}
	
	Note that the neighborhood of a vertex $v = \{a,b\} \in V(K(7,2))$ is the Kneser graph $K([7] \setminus v,2)$, which is isomorphic to the Petersen graph.
	Consider the vertex-and-edge-coloring of $N(v)$ defined in the following way:
	\begin{itemize}
		\item The edge colors of $N(v)$ are the colors inherited from $K(7,2)$;
		\item Every vertex $w \in N(v)$ is colored according to the color of the edge $vw$ in $K(7,2)$.
	\end{itemize}
	For example, the coloring of $N(y)$ (resp.{} $N(x)$) obtained from the coloring in \myautorefstar{figure1}{(a)} (resp.{} \myautorefstar{figure1}{(b)}) is depicted in \autoref{figure2}. 
	
	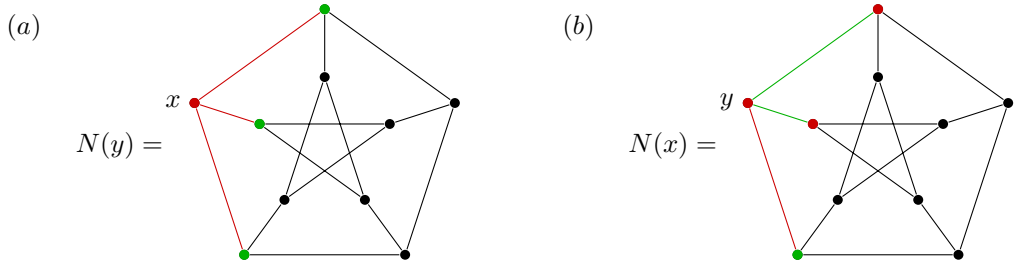
\begin{figure}[H]
		\centering
		\begin{tikzpicture}[scale=0.53]
			\foreach \x in {1,...,5} {
				\pgfmathsetmacro{\curangle}{18 + 72 * (\x - 1)}
				\node[vertex] (I\x) at (\curangle:1.5cm) {};
				\node[vertex] (O\x) at (\curangle:3cm) {};
			}
			\draw (I1) -- (I3) -- (I5) -- (I2) -- (I4) -- (I1);
			\draw (O4) -- (O5) -- (O1) -- (O2);
			\draw[Red] (O2) -- (O3) -- (O4);
			\draw[Red] (I3) -- (O3);
			\draw (I1) -- (O1) (I2) -- (O2) (I4) -- (O4) (I5) -- (O5);
			\node[vertex,emph,Red] at (O3) {};
			\node[vertex,emph,Green] at (O2) {};
			\node[vertex,emph,Green] at (O4) {};
			\node[vertex,emph,Green] at (I3) {};
			\coordinate[label=left:$x$] (1) at (162:3.1cm);
			\coordinate[label=left:$(a)$] (1) at (-5,2.6);
			\node at (-4.5,0) {$N(y) = $};
		\end{tikzpicture}
		\hspace{8mm}
		\begin{tikzpicture}[scale=0.53]
			\foreach \x in {1,...,5} {
				\pgfmathsetmacro{\curangle}{18 + 72 * (\x - 1)}
				\node[vertex] (I\x) at (\curangle:1.5cm) {};
				\node[vertex] (O\x) at (\curangle:3cm) {};
			}
			\draw (I1) -- (I3) -- (I5) -- (I2) -- (I4) -- (I1);
			\draw (O4) -- (O5) -- (O1) -- (O2);
			\draw[Green] (O2) -- (O3) -- (I3);
			\draw[Red] (O3) -- (O4);
			\draw (I1) -- (O1) (I2) -- (O2) (I4) -- (O4) (I5) -- (O5);
			\node[vertex,emph,Red] at (O3) {};
			\node[vertex,emph,Red] at (O2) {};
			\node[vertex,emph,Green] at (O4) {};
			\node[vertex,emph,Red] at (I3) {};
			\coordinate[label=left:$y$] (1) at (162:3.1cm);
			\coordinate[label=left:$(b)$] (1) at (-5,2.6);
			\node at (-4.5,0) {$N(x) = $};
		\end{tikzpicture}
		\caption{Left: the vertex-and-edge coloring of the neighborhood $N(y)$ from \myautorefstar{figure1}{(a)}. Right: the vertex-and-edge coloring of the neighborhood $N(x)$ from \myautorefstar{figure1}{(b)}. Black vertices and edges have unknown color.}
		\label{figure2}
	\end{figure}
	
	\noindent
	Now, we make two useful observations.
	\begin{itemize}
		\item[(i)] In the vertex-and-edge-colored neighborhood of a vertex $v$, an edge and its endpoints together correspond to a triangle in $K(7,2)$.
		Therefore one of the following must be true: the edge and its endpoints are all green
			(~\begin{tikzpicture}[scale=0.5,baseline=-2.3pt]
				\node[vertex,emph,Green] (L) at (0,0) {};
				\node[vertex,emph,Green] (R) at (1,0) {};
				\draw[Green] (L) -- (R);
			\end{tikzpicture}~),
		or the edge is green and its endpoints are red
			(~\begin{tikzpicture}[scale=0.5,baseline=-2.3pt]
				\node[vertex,emph,Red] (L) at (0,0) {};
				\node[vertex,emph,Red] (R) at (1,0) {};
				\draw[Green] (L) -- (R);
			\end{tikzpicture}~),
		or the edge is red and the endpoints have opposite colors
			(~\begin{tikzpicture}[scale=0.5,baseline=-2.3pt]
				\node[vertex,emph,Green] (L) at (0,0) {};
				\node[vertex,emph,Red] (R) at (1,0) {};
				\draw[Red] (L) -- (R);
			\end{tikzpicture}~).
		Note that endpoints of green edges have the same color and endpoints of red edges have different colors.
		It follows that the number of red edges along any walk must be even if the endpoints of the walk have the same color and odd if the endpoints of the walk have different colors.
		In particular, the number of red edges along any cycle must be even.
		
		\item[(ii)] The triples of edges corresponding to Equation \eqref{A7:rel2} partition the edge set of $K([7] \setminus v,2)$ as depicted in \autoref{figure3}.
		By Equation \eqref{A7:rel2}, in each of the classes of this partition, either $0$ or $2$ edges are colored red in the vertex-and-edge-coloring of $N(v)$.
		
		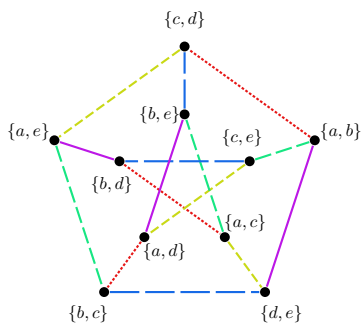
\begin{figure}[H]
			\centering
			\begin{tikzpicture}[scale=0.6,thick]
				\useasboundingbox (-4.3,-2.9) rectangle (4.3,3.4);
				\foreach \x in {18,90,162,234,306} {
					\draw[black,fill=black] (\x:3cm) node[vertex] (O\x) {};
					\draw[black,fill=black] (\x:1.5cm) node[vertex] (I\x) {};
					\pgfmathtruncatemacro{\xa}{mod(2 * (\x - 18) / 72, 5) + 1}
					\pgfmathtruncatemacro{\xb}{mod(\xa, 5) + 1}
					\pgfmathtruncatemacro{\lxab}{min(\xa,\xb)}
					\pgfmathtruncatemacro{\rxab}{max(\xa,\xb)}
					\ifnum\x=90
						\node[scale=.65] at (\x:3.35cm) {$\{\letter\lxab,\letter\rxab\}$};
					\else
						\node[scale=.65] at (\x:3.6cm) {$\{\letter\lxab,\letter\rxab\}$};
					\fi
					\pgfmathtruncatemacro{\xc}{mod(\xa + 3, 5) + 1}
					\pgfmathtruncatemacro{\xd}{mod(\xb, 5) + 1}
					\pgfmathtruncatemacro{\lxcd}{min(\xc,\xd)}
					\pgfmathtruncatemacro{\rxcd}{max(\xc,\xd)}
					\pgfmathsetmacro{\xx}{\x + 20}
					\node[scale=.65] at (\xx:1.6cm) {$\{\letter\lxcd,\letter\rxcd\}$};
				}
				\definecolor{myc1}{hsb}{0,.9,.9}
				\definecolor{myc2}{hsb}{.18,.9,.85}
				\definecolor{myc3}{hsb}{.42,.9,.9}
				\definecolor{myc4}{hsb}{.6,.9,.9}
				\definecolor{myc5}{hsb}{.8,.9,.9}
				\draw[myc1,dash pattern=on 1pt off .7pt] (I234) -- (O234) (O18) -- (O90) (I162) -- (I306);
				\draw[myc2,dash pattern=on 3.6pt off 1.4pt] (I306) -- (O306) (O90) -- (O162) (I18) -- (I234);
				\draw[myc3,dash pattern=on 6.3pt off 1.8pt] (I18) -- (O18) (O162) -- (O234) (I306) -- (I90);
				\draw[myc4,dash pattern=on 11pt off 2.2pt] (I90) -- (O90) (O234) -- (O306) (I18) -- (I162);
				\draw[myc5] (I162) -- (O162) (O306) -- (O18) (I90) -- (I234);
			\end{tikzpicture}
			\caption{The edge set of $K([7] \setminus v,2)$ is partitioned into triples corresponding to Equation \eqref{A7:rel2}. Edges belonging to the same part of this partition are depicted with matching colors and dash patterns.}
			\label{figure3}
		\end{figure}
	\end{itemize}
	
	Using the preceding observations, we show that \hyperref[proof:perfecta]{Case (a)} and \hyperref[proof:perfecta]{Case (b)} each lead to a contradiction.
	
	\begin{enumerate}[label=(\alph*)]
		\item Consider the vertex-and-edge-coloring of the neighborhood $N(y)$ (see \myautorefstar{figure2}{(a)}).
		The induced subgraph on the non-neighbors of $x$ in $N(y)$ (the black vertices in \myautorefstar{figure2}{(a)}) is a $6$-cycle, and the edges of this $6$-cycle are precisely the edges of $N(y)$ that are not incident with $x$ but belong to the same partition class (see \autoref{figure3}) as one of the edges incident with $x$.
		By observation $(ii)$, we see that there have to be three red edges and three green edges in this $6$-cycle, contradicting observation $(i)$.
		
		\item Consider the vertex-and-edge-coloring of the neighborhood $N(x)$ (see \myautorefstar{figure2}{(b)}).
		Again, the induced subgraph on the non-neighbors of $y$ in $N(x)$ (the black vertices in \myautorefstar{figure2}{(b)}) is a $6$-cycle, and the edges of this $6$-cycle are precisely the edges of $N(x)$ that are not incident with $y$ but belong to the same partition class (see \autoref{figure3}) as one of the edges incident with $y$.
		By observation $(ii)$, we see that two pairs of opposite edges in the $6$-cycle have the same color and one pair of opposite edges has different colors. 
		This shows once again that there is an odd number of red edges in this $6$-cycle, contradicting observation $(i)$.
	\end{enumerate}
	It follows that there is no way to color $K(7,2)$ consistently with at least one red edge.
\end{proof}

Next, we extend \autoref{thm:A7} to $A_n$ for all $n \geq 7$.

\begin{theorem}
	\label{thm:perfectAn}
	The homogeneous solution group $\Gamma(M_{A_n},0)$ is perfect for all $n \geq 7$.
\end{theorem}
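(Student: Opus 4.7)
The plan is to proceed by induction on $n$, with the base case $n = 7$ handled by \autoref{thm:A7}. For the inductive step, I assume $\Gamma(M_{A_{n-1}}, 0)$ is perfect and aim to show that every generator $x_h$ of $\Gamma := \Gamma(M_{A_n}, 0)$ lies in $[\Gamma, \Gamma]$; equivalently, that $\bar{x}_h = 0$ in the abelianization.

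The main tool is the following. For every $(n-1)$-element subset $B \subseteq [n]$, the stabilizer $A_B \subseteq A_n$ of the omitted point is isomorphic to $A_{n-1}$, and the defining relations in \autoref{prop:MH-group} are preserved under this inclusion: involutions, commuting pairs and commuting triples multiplying to the identity in $A_B$ remain so in $A_n$. Hence the map $x_h \mapsto x_h$ (for $h \in O_2(A_B)$) extends to a group homomorphism $\psi_B : \Gamma(M_{A_{n-1}}, 0) \to \Gamma$, whose image lies in $[\Gamma, \Gamma]$ by the inductive hypothesis. Consequently, whenever $h \in O_2(A_n)$ fixes at least one point of $[n]$, we immediately obtain $x_h \in [\Gamma, \Gamma]$.

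It remains to handle involutions $h \in O_2(A_n)$ that move every point. Such an $h$ is a product of $n/2$ disjoint transpositions, and since $h \in A_n$ this count must be even, forcing $4 \mid n$ (and in particular $n \geq 8$). Writing $h = \tau_1 \tau_2 \cdots \tau_{n/2}$, I set $h_1 := \tau_1 \tau_2$ and $h_2 := \tau_3 \cdots \tau_{n/2}$. Both $h_1$ and $h_2$ are products of an even number of transpositions (using that $n/2$ is even), so they lie in $A_n$; they have disjoint supports, so they commute pairwise with each other and with $h$; and $h_1 h_2 = h$ yields $h \cdot h_1 \cdot h_2 = 1$, so $\{h, h_1, h_2\} \in T_2(A_n)$. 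Since each of $h_1, h_2$ fixes at least one point of $[n]$, the preceding paragraph gives $x_{h_1}, x_{h_2} \in [\Gamma, \Gamma]$, and the triple relation $x_h x_{h_1} x_{h_2} = 1$ then forces $x_h \in [\Gamma, \Gamma]$ as well. The main subtlety is recognizing that involutions moving every point only exist when $4 \mid n$ and then producing the splitting into two commuting involutions each of which stabilizes some point; everything else is a routine check of the relations in \autoref{prop:MH-group}.
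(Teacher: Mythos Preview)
Your proof is correct but organized differently from the paper's. The paper does not induct on $n$; for each fixed $n \geq 7$ it shows the abelianization is trivial by inducting on the number of $2$-cycles in $h$. For the base case $h = (ab)(cd)$, it picks any $7$-element subset $S \subseteq [n]$ containing $\{a,b,c,d\}$ and restricts a putative $\{\pm 1\}$-solution to the copy of $A_7$ stabilizing $[n]\setminus S$ pointwise; \autoref{thm:A7} then forces $x_h = 1$. For larger $h$ it peels off the last two transpositions $h'$ and uses the triple $\{h,h',hh'\}$, which is essentially the same splitting trick you apply to fixed-point-free involutions.

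Your argument instead packages the restriction step as a group homomorphism $\psi_B : \Gamma(M_{A_{n-1}},0) \to \Gamma(M_{A_n},0)$ and uses that homomorphic images of perfect groups land in the commutator subgroup. This is conceptually cleaner (it records once and for all that $H \mapsto \Gamma(M_H,0)$ is functorial along subgroup inclusions), whereas the paper's version is marginally more economical in that it jumps directly from $A_7$ to $A_n$ without threading through every intermediate $A_k$. The two are dual viewpoints on the same fact: since every relation for the subgroup is already a relation for the ambient group, one can either push generators forward (your $\psi_B$) or pull $\{\pm 1\}$-solutions back (the paper's restriction).
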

\begin{proof}
	Once again, we show that the abelianization of $\Gamma(M_{A_n},0)$ is trivial.
	This is equivalent to showing that \eqref{eqn:MH-triples} has no non-trivial solutions for $x_{h_1},\ldots,x_{h_\ell} \in \{\pm 1\}$.
	
	Let $(x_h)_{h \in O_2(A_n)} \in \{\pm 1\}^{O_2(A_n)}$ be a solution to \eqref{eqn:MH-triples}.
	Note that every element of order $2$ in $A_n$ has cycle type $(a_1a_2)(a_3a_4)\cdots (a_{4r-3}a_{4r-2})(a_{4r-1}a_{4r})$; that is, it consists of an even number of disjoint $2$-cycles.
	To prove that $x_h = 1$ for all $h \in O_2(A_n)$, we proceed by induction on the number of $2$-cycles in the cycle type of $h$.
	\begin{itemize}
		\item First consider the case that $h = (ab)(cd)$.
		Choose some subset $S \subseteq [n]$ with $|S| = 7$ such that $a,b,c,d \in S$.
		Note that the set of alternating permutations of $[n]$ that fix every point in $[n] \setminus S$ forms a subgroup $H_S \subseteq A_n$ that is isomorphic to $A_7$ and that contains $h$.
		Restricting the solution $(x_{h_1},\ldots,x_{h_\ell})$ to $H_S$ (by removing all variables that do not belong to $H_S$), we obtain a solution $(x_{h_1'},\ldots,x_{h_{\ell'}'})$ to the relations of $A_7$.
		Hence it follows from \autoref{thm:A7} that $x_{h_i'} = 1$ for all $i \in [\ell']$.
		In particular, we have $x_h = 1$.
		
		\item Let $N \geq 1$ be given such that $x_h = 1$ for all $h \in O_2(A_n)$ consisting of at most $2N$ disjoint $2$-cycles, and suppose that $h \in O_2(A_n)$ consists of $2N + 2$ disjoint $2$-cycles.
		Write $h$ in disjoint cycle notation (with the cycles in arbitrary order), and let $h' = (a_{4N+1}a_{4N+2})(a_{4N+3}a_{4N+4})$ be the last two cycles. 
		Then $\{h,h',hh'\}$ forms a triple of pairwise commuting order $2$ elements that multiply to the identity, and both $h'$ and $hh'$ consist of at most $2N$ disjoint $2$-cycles.
		It follows from the induction hypothesis that $x_{h'} = x_{hh'} = 1$.
		Moreover, by \eqref{eqn:MH-triples} we have $x_h x_{h'} x_{hh'} = 1$, so it follows that $x_h = 1$.
		\qedhere
	\end{itemize}
\end{proof}

Combining our results, we can now present a sequence of graphs that have quantum symmetry and trivial automorphism group. 

\begin{theorem}
	\label{thm:graphtrivialautgroupquantumsymmetry}
	The graph $G'(M_{A_n},b)$ from \autoref{defgraph'} has quantum symmetry and trivial automorphism group for all $n \ge 7$. 
\end{theorem}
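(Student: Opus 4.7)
The plan is to combine \autoref{thm:allsolgroupsuncolored} and \autoref{thm:perfectAn} via the general fact that the classical automorphism group of a graph $G$ is the maximal classical subgroup of $\Qut(G)$, which in turn corresponds to the largest commutative quotient of the $C^*$-algebra $C(\Qut(G))$. I focus on the case $b = 0$, which suffices to establish \autoref{thm:main1} and is the case directly covered by \autoref{thm:perfectAn}.

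By \autoref{thm:allsolgroupsuncolored} we have $\Qut(G'(M_{A_n},0)) \cong \hat{\Gamma}(M_{A_n},0)$ as compact quantum groups. Passing to maximal commutative quotients, the left-hand side becomes $C(\Aut(G'(M_{A_n},0)))$ (the abelianization of $C(\Qut(G))$ forces the entries of the magic unitary to commute, and the remaining relations then characterise classical graph automorphisms), while the right-hand side becomes $C^*(\Gamma(M_{A_n},0)^{\mathrm{ab}})$. By \autoref{thm:perfectAn}, $\Gamma(M_{A_n},0)^{\mathrm{ab}}$ is trivial, so the right-hand side collapses to $\mathbb{C}$, and therefore $\Aut(G'(M_{A_n},0))$ is the trivial group.

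For the quantum symmetry part, it then suffices to observe that $\Qut(G'(M_{A_n},0))$ is itself non-trivial as a compact quantum group. Since $A_n$ contains the element $(1\,2)(3\,4)$ of order $2$ for all $n \geq 4$, the corollary immediately following \autoref{cor:natural-hom} implies that $\Gamma(M_{A_n},0)$ is non-trivial; hence $C^*(\Gamma(M_{A_n},0)) \neq \mathbb{C}$ and $\Qut(G'(M_{A_n},0)) \cong \hat{\Gamma}(M_{A_n},0)$ is a non-trivial compact quantum group. Combined with the previous step, this yields $\Qut(G'(M_{A_n},0)) \not\cong \Aut(G'(M_{A_n},0))$, i.e., $G'(M_{A_n},0)$ has quantum symmetry.

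The hard work is already behind us: the combinatorial core is \autoref{thm:perfectAn}, and the graph/quantum-group pipeline was developed in the earlier sections. The only thing to take care of is the standard observation that an isomorphism of compact quantum groups descends to an isomorphism of their maximal classical subgroups --- which on the $\Qut(G)$ side is $\Aut(G)$ and on the $\hat{\Gamma}$ side is the Pontryagin dual of $\Gamma^{\mathrm{ab}}$ --- so that perfectness of $\Gamma$ translates directly to triviality of $\Aut(G)$.
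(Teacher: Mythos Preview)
Your argument is correct and follows essentially the same route as the paper: invoke \autoref{thm:allsolgroupsuncolored} to identify $\Qut(G'(M_{A_n},0))$ with $\hat{\Gamma}(M_{A_n},0)$, then use \autoref{thm:perfectAn} together with the standard fact that the maximal classical quotient of $\hat{\Gamma}$ is $\widehat{\Gamma^{\mathrm{ab}}}$ (and of $\Qut(G)$ is $\Aut(G)$) to conclude triviality of $\Aut$ and nontriviality of $\Qut$. The paper's proof is a one-line ``follows from'' citation of the same two theorems; you have simply made the abelianization step explicit, which is helpful. Two minor remarks: (i) the restriction to $b=0$ is harmless here because, by \autoref{cor:vertexcolored}, $\Qut(G(M,b))$ is always the dual of the \emph{homogeneous} group $\Gamma(M,0)$ regardless of $b$, so the general $b$ case reduces to the one you treat; (ii) the paper additionally records that every equation in $M_{A_n}$ has three variables and every variable appears in at least two equations, but under the general decoloring procedure of \autoref{thm:decolor} this is not actually needed for the argument you give.
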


\begin{proof}
	By \autoref{cor:non-trivial-non-commutative} and \autoref{thm:perfectAn}, the solution group $\Gamma(M_{A_n},0)$ is a non-trivial perfect group whenever $n \geq 7$.
	By \autoref{thm:allsolgroupsuncolored}, $\Qut(G'(M_{A_n},b))$ is isomorphic to the dual of the solution group $\Gamma(M_{A_n},0)$, so it follows from \autoref{lem:ab} that $G'(M_{A_n},b)$ has non-trivial quantum automorphism group and trivial (classical) automorphism group whenever $n \geq 7$.
\end{proof}

The above is of course just a more specific version of \autoref{thm:main1}, where we specify the specific graphs that have quantum symmetry but trivial automorphism group.

\begin{remark}
	\label{rmk:3.A7}
	\autoref{thm:perfectAn} proves that the solution group $\Gamma(M_{A_n},0)$ is a non-trivial perfect group, but it doesn't tell us which group it is.
	Using the computer algebra system GAP \cite{GAP}, we were able to determine that $\Gamma(M_{A_7},0)$ is isomorphic to the triple cover of $A_7$ (equivalently, the unique perfect group of order $7560$), and $\Gamma(M_{A_8},0)$ is isomorphic to $A_8$.
	In particular, $\Gamma(M_{A_7},0)$ and $\Gamma(M_{A_8},0)$ are finite, and there exists a graph which has the dual of $A_8$ as its quantum automorphism group.
	Code to reproduce these computations is provided in \cite{our-GAP-code}.
	
	\label{rmk:infinite}
	We do not know whether $G'(M_{A_n},b)$ is finite or infinite when $n \geq 9$, but we suspect that $\Gamma(M_{A_n},0) \cong A_n$ for all $n \geq 8$.
	Nevertheless, even without knowing the exact solution groups, it is possible to obtain a graph with trivial automorphism group and infinite\-/dimensional quantum automorphism group from our construction.
	For instance, consider the linear system $M_{A_7} \oplus M_{A_7}$ consisting of two copies of $M_{A_7}$ in block diagonal form.
	Then $\Gamma(M_{A_7} \oplus M_{A_7},0)$ is the free product of two copies of $\Gamma(M_{A_7},0)$, and as such is an infinite perfect group.
	Therefore $G'(M_{A_7} \oplus M_{A_7},b)$ is a finite graph with trivial automorphism group and infinite\-/dimensional quantum automorphism group.
\end{remark}

\begin{remark}
	Using the computer algebra system GAP \cite{GAP}, we also found other groups which can be used instead of $A_n$ ($n \geq 7$) in the construction.
	For example, our computations show that one could also use the sporadic Mathieu groups $M_{11}$, $M_{12}$, $M_{22}$, $M_{23}$ and $M_{24}$, and furthermore suggest that the projective special linear group $\PSL(2,q)$ also works for all odd prime powers $q \geq 19$ (tested up to $q = 349$).
	In other words, if $H$ is any one of the aforementioned groups, then $\Gamma(M_H,0)$ is a non-trivial perfect group, so that the graph $G'(M_H,b)$ has trivial automorphism group and non-trivial quantum automorphism group.
	Apart from the computations in GAP, we do not have proofs for these cases.
	Our code can be found in \cite{our-GAP-code}.
\end{remark}

\section{Two non-isomorphic, quantum isomorphic asymmetric graphs}
\label{sec:asym-qisom}

In this section, we adapt the proof of the main theorem to give an example of two non-isomorphic asymmetric graphs that are quantum isomorphic, thereby answering \cite[Problem 3.11]{Weber-survey}.
For this we use the following lemmas.

\begin{lemma}
	\label{lem:M12}
	There exists a finite group $H$ such that $\Gamma(M_H,0)$ is perfect and $Z(H)$ contains an element of order $2$.
\end{lemma}
\begin{proof}
	Both conditions can be checked in polynomial time using a computer algebra system. (Recall that the solution group $\Gamma(M,0)$ is perfect if and only if the linear system $M x = 0$ has no non-trivial classical solutions.)
	By exhaustive search, one finds that the smallest group $H$ satisfying these properties is the double cover of the Mathieu group $M_{12}$.
	(Equivalently, $H$ is the unique perfect group of order $190080$ \cite[Section 5.4]{Holt-Plesken}, and it is the automorphism group of any $12 \times 12$ Hadamard matrix \cite{Hall}.)
	Code to reproduce this computation is provided in \cite{our-GAP-code}.
\end{proof}

\begin{lemma}
	\label{lem:qiso, non-iso}
	Let $M \in \Z_2^{m\times n}$ and $b \in \Z_2^m$. If the linear system $Mx = b$ has no classical solution and one has $\gamma \neq 1$ in the solution group $\Gamma(M,b)$, then $G(M,b)$ and $G(M,0)$ are quantum isomorphic, non-isomorphic graphs.
\end{lemma}

\begin{proof}
	Assume the linear system $Mx = b$ has no classical solution and we have $\gamma \neq 1$ in the solution group $\Gamma(M,b)$. Assume there exists an isomorphism $\varphi:V(G(M,b))\to V(G(M,0))$. We will show that the restriction of $\psi:=\varphi|_{V(\hat{G}(M,b))}$ is an isomorphism between $\hat{G}(M,b)$ and $\hat{G}(M,0)$. Recall the definition of $Q_k$ from \autoref{defgraph}. Since the graphs are colored, we see $\psi((k,\alpha))\in Q_k$, and therefore we have that $\psi$ is a bijection $\psi:V(\hat{G}(M,b))\to V(\hat{G}(M,0))$. Consider $(k,\alpha), (l,\beta) \in V(\hat{G}(M,b))$. Since we know $\psi((k,\alpha))\in Q_k$ and $\psi((l,\beta))\in Q_l$, we get $i\sim_{\hat{G}(M,b)} j$ if and only if $\psi(i)\sim_{\hat{G}(M,0)}\psi(j)$. Now assume we have $(k,\alpha), (l,\beta) \in V(\hat{G}(M,b))\subset V(G(M,b))$ such that $S_k\cap S_l \neq \emptyset$, with symmetric difference $\alpha \Delta \beta$. For all $i\in S_k\cap S_l$, we know that $(k,\alpha), (l,\beta)$ are adjacent to the same $(i,\delta)$ in $G(M,b)$ if $(\alpha \Delta \beta)_i=1$ and they are adjacent to different vertices of color $i$ if $(\alpha \Delta \beta)_i=-1$. Since having a common neighbor of color $i$ is preserved by isomorphism, we see that $\psi((k,\alpha)), \psi((l,\beta))$ have symmetric difference $\alpha \Delta \beta$. Therefore, $\psi$ is an isomorphism between $\hat{G}(M,b)$ and $\hat{G}(M,b)$. This contradicts \cite[Theorem 6.2]{nonsignalling}, since $\hat{G}(M,b)$ and $\hat{G}(M,b)$ are colored versions of the graphs considered in that paper. We deduce that $G(M,b)$ and $G(M,0)$ are not isomorphic.
	
	Now, we will show that $G(M,b)$ and $G(M,0)$ are quantum isomorphic. Since we have $\gamma \neq 1$ in the solution group $\Gamma(M,b)$, there exists an operator solution to $\prod_{i\in S_k}x_i=(-1)^{b_k}$, see \cite[Theorem 4]{CLS}. Let $x_i$ be such an operator solution and define the $u=(u_{xy})_{x\in V(G(M,0), y\in V(G(M,b)}$ as follows:
	\begin{align*}
		u_{(i,s)(i,t)} &=\frac{1+stx_i}{2} && \text{ for }i\in [n],\\
		u_{(k,\alpha)(k,\beta)} &=\prod_{i\in S_k}\left(\frac{1+\alpha_i\beta_ix_i}{2}\right)&& \text{ for }k\in [m],\\
		u_{xy}&=0 &&\text{otherwise}.
	\end{align*}
	Similar to Step $2$ in the proof of \cite[Theorem 3.8]{RSsolution}, one can show that $u$ is a magic unitary. For $j\in S_k$, $st\neq \alpha_j\beta_j$, it holds
	\begin{align*}
		u_{(j,s)(j,t)}u_{(k,\alpha)(k,\beta)}&=\frac{1+stx_j}{2}\prod_{i\in S_k}\left(\frac{1+\alpha_i\beta_ix_i}{2}\right)\\
		&=\frac{1+stx_j}{2}\frac{1+\alpha_j\beta_jx_j}{2}\prod_{i\in S_k, i\neq j}\left(\frac{1+\alpha_i\beta_ix_i}{2}\right)\\
		&=0,
	\end{align*}
	since $x_i$ commute for $i\in S_k$ and $(1-x_j)(1+x_j)=0$. For all other cases with $a\sim b$ and $c\nsim d$, we get $u_{ac}u_{bd}=0$ since one of the factors is already zero. We conclude that $G(M,b)$ and $G(M,0)$ are quantum isomorphic. 
\end{proof}

\begin{theorem}
	There exists a pair of non-isomorphic, connected, asymmetric graphs that are quantum isomorphic.
\end{theorem}
\begin{proof}
	By \autoref{lem:M12}, we may choose a finite group $H$ such that $\Gamma(M_H,0)$ is perfect and $Z(H)$ contains an element of order $2$.
	Then the linear system $M_H x = 0$ has no non-trivial classical solutions, so the columns of $M_H$ are linearly independent.
	Let $J \in Z(H)$ be an element of order $2$, let $b_J$ be the column of $M_H$ corresponding to $J$, and let $M_{H,J}'$ be the matrix obtained from $H$ by removing the column $b_J$.
	
	We claim that the solution groups $\Gamma(M_H,0)$ and $\Gamma(M_{H,J}',b_J)$ have the same generators and relations, and are therefore isomorphic.
	To that end, let $\{x_h\}_{h \in O_2(H)}$ and $\{y_h\}_{h \in O_2(H) \setminus \{J\}} \cup \{\gamma\}$ be the generators of $\Gamma(M_H,0)$ and $\Gamma(M_{H,J}',b_J)$, respectively.
	We identify $x_J$ with $\gamma$ and $x_h$ with $y_h$ for all $h \neq J$.
	Then the only difference between the relations of $\Gamma(M_H,0)$ and $\Gamma(M_{H,J}',b_J)$ is that the latter solution group imposes the commutation relations $[y_h,\gamma] = 1$ for all $h \in O_2(H)$, whereas the former solution group does not necessarily impose $[x_h,x_J] = 1$ for all $h \in O_2(H)$.
	However, by \autoref{prop:MH-group}, we also have $[x_h,x_J] = 1$ for all $h \in O_2(H)$, because $[h,J] = 1$ (since $J \in Z(H)$).
	This shows that $\Gamma(M_H,0)$ and $\Gamma(M_{H,J}',b_J)$ have the same generators and relations, and are therefore isomorphic.
	
	Since the columns of $M_H$ are linearly independent, $b_J$ can not be written as a linear combination of the columns of $M_{H,J}'$.
	Therefore the linear system $M_{H,J}' x = 0$ has no non-trivial classical solutions.
	On the other hand, we have $\gamma \neq 1$ in the solution group $\Gamma(M_{H,J}',b_J)$, since the natural map $\Gamma(M_{H,J}',b_J) \to H$ maps $\gamma$ to a non-identity element.
	Hence it follows from \autoref{lem:qiso, non-iso} that the vertex-colored graphs $G(M_{H,J}',b_J)$ and $G(M_{H,J}',0)$ are quantum isomorphic but not classically isomorphic.
	By \autoref{cor:decolor}, the same is true for the decolored graphs $G'(M_{H,J}',b_J)$ and $G'(M_{H,J}',0)$.
	
	It follows from our decoloring procedure that $G'(M_{H,J}',b_J)$ and $G'(M_{H,J}',0)$ are connected.
	To see that they are also asymmetric, note that the quantum automorphism group of both of these graphs is isomorphic to the dual of $\Gamma(M_{H,J}',0)$ (by \autoref{thm:allsolgroupsuncolored}), and hence the classical automorphism group of both of these graphs is isomorphic to the dual of the abelianization of $\Gamma(M_{H,J}',0)$ (by \autoref{lem:ab}).
	But the abelianization of $\Gamma(M_{H,J}',0)$ is trivial, since $\Gamma(M_{H,J}',0) \cong \Gamma(M_{H,J}',b_J) / \langle b_J \rangle$ and $\Gamma(M_{H,J}',b_J) \cong \Gamma(M_H,0)$ is perfect (every quotient of a perfect group is perfect).
	This shows that both graphs are asymmetric.
\end{proof}

\section{Every finite group occurs as a quantum automorphism group}
\label{sec:frucht}

In \cite{Frucht}, Frucht showed that every finite group can be realized as the automorphism group of a graph. In this section, we prove that the graphs constructed in that paper have no quantum symmetry. Together with our previous results, this yields that for every finite group, we find pairs of graphs whose automorphism groups are isomorphic to that group, but one of the graphs has quantum symmetry and the other one does not. 

The following graphs were constructed in \cite{Frucht}. Note that their automorphism group coincides with the group they are constructed from. 

\begin{definition}
	\label{def:Fruchtconstruction}
	Let $\Gamma$ be a finite group of order $n\geq3$ with labelled elements $g_1, \dots g_n$. For $i,j \in [n], i\neq j$, let $m(i,j)\in \N,$ $m(i,j)\geq 2$ such that $m(i,j)=m(s,t)$ if and only if $g_ig_j^{-1}=g_sg_t^{-1}$. Define $G_{\Gamma}$ to be the following (undirected) graph:
	\begin{itemize}
		\item It has vertices $p_i$, $q_{ijk}$ and $r_{ijl}$ for $i,j\in [n]$ with $i\neq j$, for $k \in [2m(i,j)-2]$ and $l \in [2m(i,j)-1]$.
		\item They are adjacent in the following way: $p_i \sim q_{ij1}$, $p_i\sim r_{ji1}$, $q_{ij1}\sim r_{ij1}$, $q_{ijk}\sim q_{ij(k-1)}$ and $r_{ijl}\sim r_{ij(l-1)}$. 
	\end{itemize}
	For all groups of order $1\leq i \leq 2$, we define $G_{\Gamma}=K_i$.
\end{definition}

Recall that the \emph{degree} $\deg(v)$ of a vertex $v$ denotes the number of neighbors of $v$ in the graph $G$. The \emph{distance} $d(v,w)$ between two vertices $v,w \in V(G)$ is the length of a shortest path connecting $v$ and $w$. The following lemma consists of \cite[Lemma 3.2.3]{Ful} and \cite[Lemma 4.3]{RSsolution}. 

\begin{lemma}
	\label{lem:distdeg}
	Let $G$ be a finite graph and $u$ the fundamental representation of $\Qut(G)$. Let $v,w,a,b \in V(G)$. 
	\begin{itemize}
		\item[(i)] If $\deg(v)\neq \deg(w)$, then $u_{vw}=0$.
		\item[(ii)] If there exists $p \in V(G)$ with $d(v,p)=k$ such that $\deg(p) \neq \deg(q)$ for all $q \in V(G)$ with $d(w,q)=k$, then $u_{vw}=0$. 
	\end{itemize}
\end{lemma}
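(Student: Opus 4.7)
The plan is to prove (i) directly from the intertwining relation $Au = uA$ (forced on the fundamental representation $u$ by the edge-preservation relations of $\Qut(G)$), and then to prove (ii) by first establishing an auxiliary ``distance preservation'' lemma and combining it with (i).

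For (i), reading off the $(v,w)$ entry of $Au = uA$ gives $\sum_{a \sim v} u_{aw} = \sum_{b \sim w} u_{vb}$. Summing over all $w \in V(G)$ and using $\sum_w u_{aw} = 1$, the left-hand side collapses to $\deg(v) \cdot 1$ while the right-hand side becomes $\sum_b \deg(b)\, u_{vb}$. Multiplying the resulting identity $\deg(v) \cdot 1 = \sum_b \deg(b)\, u_{vb}$ by $u_{vw}$ and applying the row orthogonality $u_{vw}u_{vb} = \delta_{wb}\,u_{vw}$ yields $(\deg(v) - \deg(w))\,u_{vw} = 0$, and hence $u_{vw} = 0$ whenever $\deg(v) \neq \deg(w)$.

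The key step for (ii) is the auxiliary claim that $u_{vw}u_{pq} = 0$ whenever $d(v,p) \neq d(w,q)$, which I would prove by induction on $m := \min(d(v,p), d(w,q))$. The base $m = 0$ is immediate from row or column orthogonality of $u$. For the inductive step, assume $m \geq 1$; by the symmetry between the pairs $(v,p)$ and $(w,q)$, we may assume $d(v,p) = m < d(w,q)$. Pick $p' \in V(G)$ with $p \sim p'$ and $d(v,p') = m - 1$, and expand
\[ u_{vw}u_{pq} = u_{vw} \Bigl( \sum_{q'} u_{p'q'} \Bigr) u_{pq} = \sum_{q'} u_{vw}\, u_{p'q'}\, u_{pq}. \]
Relation~\eqref{rel4} applied to the edge $p \sim p'$ forces $u_{p'q'}u_{pq} = 0$ unless $q' \sim q$, while the inductive hypothesis (applied to $(v,w)$ and $(p',q')$, whose associated minimum distance is at most $m - 1$) forces $u_{vw}u_{p'q'} = 0$ unless $d(w,q') = m - 1$. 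Any surviving $q'$ would then yield $d(w,q) \leq d(w,q') + 1 = m$, contradicting $d(w,q) > m$, so every term vanishes.

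With this auxiliary lemma, (ii) follows immediately: if $p$ witnesses the hypothesis of (ii), then $u_{vw} = u_{vw} \sum_q u_{pq} = \sum_q u_{vw}u_{pq}$, and for each $q$ either $\deg(q) \neq \deg(p)$ (forcing $u_{pq} = 0$ by (i)) or $\deg(q) = \deg(p)$ (forcing $d(w,q) \neq k = d(v,p)$ by hypothesis, hence $u_{vw}u_{pq} = 0$ by the auxiliary claim). The main obstacle is setting up the induction parameter for the auxiliary lemma: the choice $\min(d(v,p), d(w,q))$ is precisely what allows a single adjacency step $p \sim p'$ to shrink the ``small'' side of the distance pair by one while only enlarging a related distance by one, preserving the strict inequality $d(w,q) > m$ that drives the contradiction.
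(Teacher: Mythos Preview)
Your proof is correct. Part (i) follows cleanly from the intertwining relation $Au = uA$ (which is a standard consequence of Relation~\eqref{rel4} together with the magic unitary conditions), and your spectral-type argument extracting $(\deg(v)-\deg(w))u_{vw}=0$ is valid. For part (ii), the auxiliary distance-preservation claim $u_{vw}u_{pq}=0$ whenever $d(v,p)\neq d(w,q)$ is proved correctly by induction on $\min(d(v,p),d(w,q))$; the ``by symmetry'' reduction to the case $d(v,p)=m<d(w,q)$ is justified since the mirror argument (stepping from $q$ to a neighbor $q'$ with $d(w,q')=m-1$) works identically. The deduction of (ii) from (i) and the auxiliary claim is then immediate.

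There is no comparison to make with the paper's argument: the paper does not prove this lemma but simply cites it from the literature (\cite[Lemma~3.2.3]{Ful} and \cite[Lemma~4.3]{RSsolution}). Your writeup is exactly the kind of standard argument those references contain, so nothing is missing.
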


We need another easy lemma, which is used to shorten an upcoming proof. 

\begin{lemma}
	\label{lem:genequal}
	Let $G$ be a finite graph and $u$ the fundamental representation of $\Qut(G)$. Let $v,w,a,b \in V(G)$, $v\sim w$, $a \sim b$. If $u_{vx}=0$ for all $x\sim b, x\neq w$ and $u_{ac}=0$ for all $c\sim w$, $c\neq b$, then $u_{vw}=u_{ab}$.
\end{lemma}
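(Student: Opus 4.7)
The plan is to establish the two identities $u_{vw} = u_{vw}\,u_{ab}$ and $u_{ab} = u_{ab}\,u_{vw}$ and then combine them. Since entries of the magic unitary are self-adjoint projections, the first identity gives $u_{vw} = (u_{vw}u_{ab})^* = u_{ab}u_{vw}$, so $u_{vw}$ is a sub-projection of $u_{ab}$; the second identity yields the reverse containment, hence $u_{vw} = u_{ab}$.

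To establish $u_{vw} = u_{vw}\,u_{ab}$, I would insert the row sum $\sum_{c} u_{ac} = 1$ coming from row $a$ and decompose
\[
    u_{vw} \;=\; u_{vw}\sum_{c} u_{ac} \;=\; u_{vw}u_{ab} \;+\; \sum_{c \neq b} u_{vw}u_{ac},
\]
so that it suffices to show that every term in the residual sum vanishes. I would split the sum according to the position of $c$ with respect to $w$. If $c \sim w$ and $c \neq b$, then $u_{ac} = 0$ by the second hypothesis, so the term vanishes. If $c = w$, then $u_{vw}u_{aw} = 0$ by column orthogonality of the magic unitary (rows $v$ and $a$ are distinct, as the case $v = a$ is separate and immediate). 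If $c \notin N(w) \cup \{w, b\}$, then $(w, c)$ is a non-edge and the adjacency relation of $\Qut(G)$ (the relation $u_{ij}u_{kl} = 0$ whenever $(i,k)$ and $(j,l)$ have opposite adjacency type) kills the product $u_{vw}u_{ac}$.

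The symmetric identity $u_{ab} = u_{ab}u_{vw}$ is then obtained by the analogous computation, inserting the row sum $\sum_{x} u_{vx} = 1$ into $u_{ab}$ and using the first hypothesis in place of the second, together with the same orthogonality considerations. Combining the two identities as in the first paragraph yields $u_{vw} = u_{ab}$.

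The main obstacle I expect is the third case in the split: it rests on the adjacency relation of $\Qut(G)$ and requires carefully checking that the adjacency patterns of $(v, a)$ and $(w, c)$ force the product to vanish for every remaining $c$. A small amount of case checking for the degenerate configurations where some of $v, w, a, b$ coincide will also be needed, but these collapse to straightforward consequences of the magic-unitary row and column relations.
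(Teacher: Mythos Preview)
Your approach is correct and matches the paper's; the paper compresses it into the single chain
\[ u_{vw} \;=\; u_{vw}\Bigl(\sum_{c\sim w} u_{ac}\Bigr) \;=\; u_{vw}u_{ab} \;=\; \Bigl(\sum_{x\sim b} u_{vx}\Bigr)u_{ab} \;=\; u_{ab}, \]
so the separate sub-projection step via self-adjointness is not needed. One caveat worth making explicit: your third case (and equally the first and last equalities in the chain above) needs $v\sim a$ and $w\sim b$ in order to invoke the adjacency relation $u_{ij}u_{kl}=0$ for $(i,k)$, $(j,l)$ of opposite type; the stated hypotheses ``$v\sim w$, $a\sim b$'' appear to be a typo for ``$v\sim a$, $w\sim b$'' (this is how the lemma is actually applied in the proof of \autoref{thm:Fruchtnoqsym}), and under that reading your degenerate case $v=a$ is automatically excluded.
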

\begin{proof}
	We compute
	\begin{align*}
		u_{vw}=u_{vw}\left(\sum_{c;c\sim w}u_{ac}\right)=u_{vw}u_{ab}=\left(\sum_{x;x\sim b}u_{vx}\right)u_{ab}=u_{ab},
	\end{align*}
	by using Relations \eqref{rel2} and \eqref{rel4} in \autoref{def:QAG} and the assumption. 
\end{proof}

We are now ready to show that the graphs of \autoref{def:Fruchtconstruction} have no quantum symmetry. 

\begin{theorem}
	\label{thm:Fruchtnoqsym}
	Let $\Gamma$ be a finite group and let $G_{\Gamma}$ be the graph as in \autoref{def:Fruchtconstruction}. Then $G_{\Gamma}$ has no quantum symmetry. 
\end{theorem}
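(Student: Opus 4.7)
Our plan is to show that $C(\Qut(G_\Gamma))$ is commutative, which immediately gives $\Qut(G_\Gamma) \cong \Aut(G_\Gamma) \cong \Gamma$. The cases $|\Gamma| \leq 2$ are trivial since $G_\Gamma = K_1$ or $K_2$, so we assume $n := |\Gamma| \geq 3$. Then $\deg(p_i) = 2(n-1) \geq 4$ whereas every other vertex has degree at most $3$, so \autoref{lem:distdeg}(i) gives $u_{p_i v} = u_{v p_i} = 0$ for every $v \notin \{p_k : k \in [n]\}$; in particular, the entries $u_{p_i p_{i'}}$ form a magic submatrix of $u$.

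Next we reduce every entry of $u$ to the form $u_{p_i p_{i'}}$ (or zero) using the topological path structure of the gadgets. In the gadget for $(i,j)$, the branch vertex $q_{ij1}$ is attached via its unique degree-$2$ neighbor to a leaf-terminated topological path of \emph{odd} length $2m(i,j)-3$, whereas $r_{ij1}$ is attached to one of \emph{even} length $2m(i,j)-2$. By \autoref{lem:topological-path} (combined with \myautoref{lem:modifications}{itm:mod:recolor} to promote color-refinement distinctness to zero entries of $u$), we obtain $u_{q_{ij1} x} = 0$ unless $x = q_{i'j'1}$ with $m(i,j) = m(i',j')$, and symmetrically for $r_{ij1}$; the parity mismatch of tail lengths also gives $u_{q_{ij1} r_{i'j'1}} = 0$ for all $i,j,i',j'$. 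For fixed $i'$, the map $k \mapsto m(i',k)$ is injective (it records $g_{i'}g_k^{-1}$), so the $j'$ with $m(i,j) = m(i',j')$ is unique, and the hypotheses of \autoref{lem:genequal} are met with $a = p_i$, $b = p_{i'}$, $v = q_{ij1}$, $w = q_{i'j'1}$, yielding $u_{q_{ij1} q_{i'j'1}} = u_{p_i p_{i'}}$. The symmetric argument gives $u_{r_{ij1} r_{i'j'1}} = u_{p_j p_{j'}}$, and iterating along the tails yields $u_{q_{ijk} q_{i'j'k}} = u_{p_i p_{i'}}$ and $u_{r_{ijl} r_{i'j'l}} = u_{p_j p_{j'}}$ for all admissible $k,l$.

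The key step, and the one we expect to be the main obstacle, is to prove $u_{p_i p_{i'}} u_{p_j p_{j'}} = 0$ unless $g_i g_j^{-1} = g_{i'} g_{j'}^{-1}$. Since $p_i \sim q_{ij1} \sim r_{ij1} \sim p_j$ is a path of length $3$, inserting $\sum_x u_{q_{ij1} x} = \sum_y u_{r_{ij1} y} = 1$ writes $u_{p_i p_{i'}} u_{p_j p_{j'}}$ as a sum over length-$3$ walks from $p_{i'}$ to $p_{j'}$ compatible with the zero-relations from the previous paragraph. An inspection of $G_\Gamma$ shows there are exactly two length-$3$ paths from $p_{i'}$ to $p_{j'}$: the \emph{forward} path $p_{i'} \sim q_{i'j'1} \sim r_{i'j'1} \sim p_{j'}$ and the \emph{reverse} path $p_{i'} \sim r_{j'i'1} \sim q_{j'i'1} \sim p_{j'}$. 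The reverse contribution vanishes because $u_{q_{ij1} r_{j'i'1}} = 0$ by the parity argument, while the forward contribution is nonzero only if $m(i,j) = m(i',j')$, which by \autoref{def:Fruchtconstruction} is equivalent to $g_i g_j^{-1} = g_{i'} g_{j'}^{-1}$.

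To conclude, fix any $i_0 \in [n]$ and for each $c \in \Gamma$ set $f_c^{(i_0)} := u_{p_{i_0} p_{\pi_c(i_0)}}$, where $\pi_c(i)$ is the unique index with $g_{\pi_c(i)} = g_i c$. The previous step gives $f_c^{(i_0)} f_{c'}^{(i_1)} = 0$ for $c \neq c'$, and $\sum_c f_c^{(i_0)} = 1$ because $c \mapsto \pi_c(i_0)$ is a bijection of $\Gamma$. Multiplying $f_c^{(i_0)}$ on the right by $\sum_{c'} f_{c'}^{(i_1)} = 1$ yields $f_c^{(i_0)} = f_c^{(i_0)} f_c^{(i_1)}$; symmetrically $f_c^{(i_1)} = f_c^{(i_0)} f_c^{(i_1)}$, so $f_c := f_c^{(i_0)}$ is independent of $i_0$. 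Moreover $u_{p_i p_{i'}} = f_{g_i^{-1} g_{i'}}$, and by the second paragraph every entry of $u$ equals some $f_c$ or $0$. Since the $f_c$ are pairwise orthogonal projections summing to $1$, they commute, so $C(\Qut(G_\Gamma))$ is commutative, as required.
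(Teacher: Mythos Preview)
Your proof is correct and follows essentially the same route as the paper: isolate the $p$-vertices by degree, use the gadget paths to force every other entry of $u$ either to vanish or to equal some $u_{p_ip_{i'}}$, and then show that the latter form a commuting family (the paper chains \autoref{lem:genequal} through $q_{ij1}$ and $r_{ij1}$ to obtain $u_{p_ip_{i'}}=u_{p_jp_{j'}}$ directly, whereas you first prove the orthogonality and then deduce the equality, but this reordering is cosmetic). One minor imprecision: \autoref{lem:topological-path} is stated only for interior (degree-$2$) vertices of topological paths, so it does not literally apply to the degree-$3$ branch vertices $q_{ij1},r_{ij1}$ nor to the case $m(i,j)=2$ where the $q$-tail has no interior; the conclusions you need still follow by an obvious extra refinement step, or more directly from \autoref{lem:distdeg}(ii) as in the paper.
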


\begin{proof}
	For groups of order $1$ and $2$, we know that $K_n$ has no quantum symmetry for $n=1,2$. Let $n\geq 3$. The vertices in $G_{\Gamma}$ have the following degrees:
	\begin{itemize}
		\item $\deg(p_i)=2(n-1)$, since $p_i \sim q_{ij1}$, $p_i\sim r_{ji1}$ for $j\neq i$,
		\item $\deg(q_{ij1})=3=\deg(r_{ij1})$, because $q_{ij1}\sim p_i, q_{ij2},r_{ij1}$ and $r_{ij1}\sim p_j, r_{ij2}, q_{ij1}$,
		\item $\deg(q_{ijk})=2=\deg(r_{ijl})$ for $2\leq k\leq 2m(i,j)-3$, $2\leq l\leq 2m(i,j)-2$, since $q_{ijk}\sim q_{ij(k-1)}, q_{ij(k+1)}$ and $r_{ijl}\sim r_{ij(l-1)}, r_{ij(l+1)}$,
		\item $\deg(q_{ijk})=1=\deg(r_{ijl})$ for $k=2m(i,j)-2, l=2m(i,j)-1$, as $q_{ijk}\sim q_{ij(k-1)}$ and $r_{ijl}\sim r_{ij(l-1))}$.
	\end{itemize}
	Let $u$ be the fundamental representation of $\Qut(G_{\Gamma})$. Note that $2(n-1)>3$ for $n\geq 3$. From \autoref{lem:distdeg} $(i)$, we get 
	\begin{align}
		&u_{{p_a}q_{ijb}}=u_{q_{ijb}{p_a}}=u_{{p_a}r_{ijc}}=u_{r_{ijc}{p_a}}=0, \label{eq:zero1}\\
		&u_{q_{ijx}q_{stk}}=u_{q_{stk}q_{ijx}}=u_{r_{ijy}r_{stl}}=u_{r_{stl}r_{ijy}}=0\label{eq:zero2}\\
		&u_{q_{stk}r_{ijy}}=u_{r_{ijy}q_{stk}}=u_{q_{ijx}r_{stl}}=u_{r_{stl}q_{ijx}}=0,\nonumber
	\end{align}
	for $x=2m(i,j)-2, y=2m(i,j)-1, k\neq 2m(s,t)-2, l\neq 2m(s,t)-1$. Using \autoref{lem:distdeg} $(ii)$, we furthermore see
	\begin{align}
		u_{q_{ijk}q_{stl}}=u_{r_{ijk}r_{stl}}=u_{r_{stl}r_{ij1}}=u_{q_{ijk}r_{stl}}=u_{r_{ijk}q_{stl}}=0 \text{ for } k\neq l,\label{eq:zero3}
	\end{align}
	since the distance of $q_{ijk}$ and $r_{stl}$ to a vertex of degree $3$ ($q_{ij1}$ and $r_{st1}$) is $k-1$ and $l-1$, respectively. 
	
	We obtain 
	\begin{align}
		u_{q_{ijk}q_{stk}}=u_{q_{ij(k+1)}q_{st(k+1)}}, \quad u_{r_{ijk}r_{stk}}=u_{r_{ij(k+1)}r_{st(k+1)}}, \nonumber\\
		u_{q_{ijk}r_{stk}}=u_{q_{ij(k+1)}r_{st(k+1)}}, \quad 
		u_{r_{ijk}q_{stk}}=u_{r_{ij(k+1)}q_{st(k+1)}},\label{eq:geneqal2}
	\end{align}
	by \autoref{lem:genequal}, where the assumptions for the lemma hold by Equation \eqref{eq:zero1} and Equation \eqref{eq:zero3}. This yields $u_{q_{ijk}r_{stk}}=u_{q_{ijz}r_{stz}}=0$ for $z=\mathrm{min}(2m(i,j)-1, 2m(s,t)-2)$ by Equation \eqref{eq:zero2} as $2m(i,j)-1\neq 2m(s,t)-2$. We similarly get $u_{q_{ijk}q_{stk}}=0=u_{r_{ijk}r_{stk}}$ for $m(i,j)\neq m(s,t)$. 
	
	Let $m(i,j)=m(s,t)$. Using \autoref{lem:genequal} several times, we obtain 
	\begin{align}
		u_{p_ip_s}=u_{q_{ij1}q_{st1}}=u_{r_{ij1}r_{st1}}=u_{p_jp_t}.\label{eq:geneqal1} 
	\end{align}
	From Equations \eqref{eq:zero1}--\eqref{eq:geneqal1}, we deduce that $C(\Qut(G_{\Gamma}))$ is generated by the elements $u_{p_ip_s}$. 
	
	Let $m(i,j)\neq m(s,q)$. Then there is a unique $t\neq q$ such that $m(i,j)= m(s,t)$. By Equation \eqref{eq:geneqal1}, we get
	\begin{align*}
		u_{p_ip_s}u_{p_jp_q}=u_{p_jp_t}u_{p_jp_q}=0,
	\end{align*}
	since $p_t\neq p_q$. 
	Finally, let $m(i,j)= m(s,t)$. We deduce 
	\begin{align*}
		u_{p_ip_s}u_{p_jp_t}=u_{p_jp_t}=u_{p_ip_s}u_{p_jp_t},
	\end{align*}
	from Equation \eqref{eq:geneqal1} and therefore see that all generators of $C(\Qut(G_{\Gamma}))$ commute. 
\end{proof}

\begin{remark}
	Since we know $\Aut(G_{\Gamma})\cong \Gamma$ by \cite{Frucht}, the previous theorem shows a weak quantum analog of Frucht’s theorem: every finite group can be realized as the quantum automorphism group of a finite graph. A stronger analog was recently proven in \cite{brannan2025quantumfrucht}: They show that every finite quantum group is the quantum automorphism group of a finite quantum graph.
\end{remark}

Recall the graph $G_{\Gamma}$ from \autoref{def:Fruchtconstruction}
and $G'(M_{A_n},b)$ from \autoref{defgraph} and \autoref{deflinsysgroup}. The next theorem shows that for every finite group, there is a pair of graphs such that both automorphism groups are isomorphic to that group, but exactly one of them has quantum symmetry. 

\begin{theorem}
	\label{thm:main2-specified}
	Let $\Gamma$ be a finite group and consider the graphs $G_1:=G_{\Gamma}$ and $G_2:=G_{\Gamma}\cup G'(M_{A_7},b)$. Then, we have $\Aut(G_i) \cong \Gamma$ for $i=1,2$ and $\Qut(G_1) = \Aut(G_1)$ but $G_2$ has quantum symmetry.
\end{theorem}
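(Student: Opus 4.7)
The statement splits into four claims, three of which reduce to results already in hand. Indeed, $\Aut(G_1) \cong \Gamma$ is Frucht's theorem \cite{Frucht}, $\Qut(G_1) = \Aut(G_1)$ is exactly \autoref{thm:Fruchtnoqsym}, and for $\Aut(G_2)\cong \Gamma$ I would observe that $G_\Gamma$ and $G'(M_{A_7},b)$ are connected non-isomorphic graphs (distinguished either by their automorphism groups when $|\Gamma|\geq 2$, or by their vertex counts when $\Gamma$ is trivial), so that
\[ \Aut(G_2)\cong \Aut(G_\Gamma)\times \Aut(G'(M_{A_7},b))\cong \Gamma\times \{1\}\cong \Gamma, \]
using \autoref{thm:graphtrivialautgroupquantumsymmetry} for the triviality of $\Aut(G'(M_{A_7},b))$.

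The new content is showing that $G_2$ has quantum symmetry. My strategy is to produce a surjective $*$-homomorphism
\[ \varphi : C(\Qut(G_2)) \twoheadrightarrow C(\Qut(G'(M_{A_7},b))). \]
Since $G'(M_{A_7},b)$ has quantum symmetry by \autoref{thm:graphtrivialautgroupquantumsymmetry}, the codomain is noncommutative; pulling this back along $\varphi$ forces $C(\Qut(G_2))$ to be noncommutative as well, which is equivalent to $\Qut(G_2)\neq \Aut(G_2)$. To construct $\varphi$, let $\hat{u}$ denote the fundamental representation of $\Qut(G'(M_{A_7},b))$ and form the block-diagonal matrix
\[ U := I_{V(G_\Gamma)} \oplus \hat{u}, \]
indexed by $V(G_2) = V(G_\Gamma) \sqcup V(G'(M_{A_7},b))$, with entries in $C(\Qut(G'(M_{A_7},b)))$. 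A direct check shows that $U$ is a magic unitary and $A_{G_2}U = U A_{G_2}$, using the block-diagonal structure of $A_{G_2}$ and the intertwining relation for $\hat{u}$. The universal property of $C(\Qut(G_2))$ then produces the desired $\varphi$ sending the fundamental representation of $\Qut(G_2)$ to $U$; surjectivity is automatic because the image contains all of the $\hat{u}_{v,w}$, which generate $C(\Qut(G'(M_{A_7},b)))$.

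The main (and rather minor) obstacle is keeping track of the block structure while verifying that $U$ is a magic unitary intertwining $A_{G_2}$ with itself; this is entirely routine. The whole argument boils down to the intuition ``act by the identity on the $G_\Gamma$ component and by a non-classical quantum automorphism on the $G'(M_{A_7},b)$ component.''
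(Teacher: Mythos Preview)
Your argument is correct. The first three claims are handled just as the paper handles them (Frucht, \autoref{thm:Fruchtnoqsym}, and \autoref{thm:graphtrivialautgroupquantumsymmetry}); your justification of $\Aut(G_2)\cong\Gamma$ via connected non-isomorphic pieces is slightly more explicit than the paper's one-line appeal to the same ingredients, though you assert connectivity of $G'(M_{A_7},b)$ without proof. This is true but not entirely obvious; in fact you do not need it, since $\Aut(G'(M_{A_7},b))$ being trivial already forces all its components to be pairwise non-isomorphic with trivial automorphism group, and none can coincide with the connected graph $G_\Gamma$ (compare automorphism groups if $|\Gamma|\ge 2$, and note there are no isolated vertices if $\Gamma$ is trivial).

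For the quantum-symmetry claim you take a genuinely different route. The paper invokes \cite[Corollary~7.1.4]{Schmidtthesis}, which identifies $\Qut(G_\Gamma\sqcup G'(M_{A_7},b))$ with the free product $\Qut(G_\Gamma)\ast\Qut(G'(M_{A_7},b))$ under the hypothesis that the two pieces are not quantum isomorphic, and then reads off noncommutativity from the second factor. Your approach bypasses both the cited result and the non-quantum-isomorphism hypothesis: the block-diagonal magic unitary $I\oplus\hat u$ manifestly commutes with the block-diagonal adjacency matrix of the disjoint union, so the universal property immediately yields a surjection $C(\Qut(G_2))\twoheadrightarrow C(\Qut(G'(M_{A_7},b)))$ onto a noncommutative algebra. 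This is more elementary and fully self-contained, at the cost of not identifying $\Qut(G_2)$ precisely; the paper's approach gives the stronger structural statement that $\Qut(G_2)$ is a free product, which is what underlies the remark following the theorem about $\Qut(G_2)$ being infinite dimensional for $\Gamma\neq 1$.
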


\begin{proof}
	It holds $\Aut(G_i) \cong \Gamma$, since we know $\Aut(G_{\Gamma}) \cong \Gamma$ from \cite{Frucht} and since $G'(M_{A_7},b)$ has trivial automorphism group by \autoref{thm:graphtrivialautgroupquantumsymmetry}. From \cite[Corollary 7.1.4]{Schmidtthesis}, we know that the quantum automorphism group of the disjoint union of non-quantum isomorphic graphs is the free product of the quantum automorphism group of the graphs. Since $G'(M_{A_7},b)$ has quantum symmetry by \autoref{thm:graphtrivialautgroupquantumsymmetry}, we see that $G_2$ also has quantum symmetry. 
\end{proof}

The previous theorem is a version of \autoref{thm:main2}, where we specify the graphs we use for every finite group. Note that the theorem shows that
there do not exist any quantum-excluding groups; that is, there is no finite group such that every graph whose automorphism group is isomorphic to that group has no quantum symmetry. This answers a question that was asked in \cite{QFrucht}.
Furthermore, we see that the quantum automorphism group of a graph can never be determined from its automorphism group, since for all automorphism groups there are both graphs with and without quantum symmetry. This answers a question raised in \cite[Section 8.1]{Schmidtthesis}.

The construction used in the proof of \autoref{thm:main2-specified} results in $G_2$ having infinite\-/dimensional quantum automorphism group whenever $\Gamma \neq 1$. We leave as an open question whether or not there exists a graph $\hat{G}_\Gamma$ such that $\Aut(\hat{G}_\Gamma) \cong \Gamma$ and $\Qut(\hat{G}_\Gamma) \not\cong \Gamma$ is finite\-/dimensional for arbitrary $\Gamma$.

\small
\interlinepenalty99 
\bibliographystyle{alphaurl}
\bibliography{graphqsymtrivialaut}

\end{document}